\newtheorem{lem}{Lemma}[section]
\newtheorem{rmk*}{Remark}[section]
\newtheorem{thm}{Theorem}[section]
\begin{document}
\title{\large Almost resolvable even cycle systems of $(K_u \times K_g)(\lambda)$}
\author{{\small S. Duraimurugan, A. Shanmuga Vadivu and  A. Muthusamy  } \\ {\footnotesize Department
of Mathematics, Periyar University, Salem,} \\ {\footnotesize
Tamil Nadu, India.} \\ {\footnotesize \emph{ E-mail:  duraipu93@gmail.com, avshanmugaa@gmail.com and appumuthusamy@gmail.com }}}
\date{}
\maketitle
\begin{center}
\bf Abstract
\end{center}
In this paper, we  prove that almost resolvable $k$-cycle systems (briefly $k$-ARCS) of $(K_u \times K_g)(\lambda)$ exists for all  $k \equiv 0(mod \ 4) $ with  few possible exceptions,  where $\times$ represents tensor product of graphs.
\\
{\bf Keywords:} 
Tensor product; Resolvable cycle systems;
Almost resolvable cycle systems.

\section{Introduction} 
\par Throughout this paper all the graphs considered here are  finite. For a graph $G$, we use $G(\lambda)$ and $\lambda G$, where $\lambda$ is an integer, to represent the multi-graphs obtained from $G$ by replacing each edge of $G$ with uniform edge-multiplicity $\lambda$ and $\lambda$ edge-disjoint copies of $G$ respectively. Let $C_t$, $K_t$ and $\bar{K}_{t}$ respectively, denote the {\it cycle, complete graph} and {\it complement of complete graph} on $t$ vertices. Let $K_{t,t}$ be a {\it complete bipartite graph} with bipartition $(A,B)$, where $A=\{a_0,a_1,...,a_{t-1}\} \ \mbox{and} \ B=\{b_0,b_1,...,b_{t-1}\}$. The edge set of $F_i(A,B)\subset K_{t,t}$ is defined as $\{a_jb_{j+i}: \ 0 \leq j \leq t-1 \}$, $0 \leq i \leq t-1$, where addition in the subscripts are taken modulo $t$. Clearly $F_i(A,B)$ is a {\it $1$-factor of $K_{t,t}$ with distance $i$ }from $A$ to $B$. Also, $\oplus_{i=0}^{t-1}F_i(A,B)=K_{t,t}.$ 
\par For two graphs $G$ and $H$, their {\it lexicographic product} $G \otimes H$ has the vertex set $V(G \otimes H)=V(G) \times V(H)$ in which two vertices  $(g_1,h_1)$ and $(g_2,h_2)$ are adjacent whenever $g_1g_2 \in E(G)$ or $g_1=g_2$ and $h_1h_2 \in E(H)$. Similarly, the {\it tensor product} $G \times H$ of two graphs $G$ and $H$ has the vertex set $V(G) \times V(H)$ in which two vertices $(g_1,h_1)$ and $(g_2,h_2)$ are adjacent whenever $g_1g_2 \in E(G)$ and $h_1h_2 \in E(H)$. Clearly, the tensor product is commutative and distributive over edge disjoint union of graphs, i.e, if $G=G_1 \oplus G_2 \oplus...\oplus G_{u},$ then $G \times H=(G_1 \times H) \oplus...\oplus (G_u \times H)$. One can easily observe that $K_u \otimes \bar{K}_{g} \cong K_{g,g,....g}$, the complete $u$-partite graph in which each partite set has $g$ vertices. Hereafter, we denote a {\it complete $u$-partite graph} with $g$ vertices in each  partite set as $K_u \otimes \bar{K}_{g}$.
It is clear that $(K_u \otimes \bar{K}_{g})-gK_u \cong K_u \times K_g$, where $gK_u$ denotes $g$ disjoint copies of $K_u$.   For more details  about product graphs, the readers are referred to \cite{WI}.
\par A {\it $k$-factor} (respectively, {\it near $k$-factor}) of $G$ is a  $k$-regular spanning subgraph of $G$ (respectively, $G \setminus \{v\}$, for some  $v \in V( G)$). Note that the {\it $C_k$-factor} (respectively, {\it near $C_k$-factor}) of $G$ can also be called a $2$-factor of $G$ (respectively, $G \setminus \{v\}$, for some $v \in V(G)$), where the components are $C_k$.  We say that the graph $G$ has a {\it decomposition} into $G_1, G_2,...,G_r$, for some integer $r \geq 1$, if $G= \oplus_{i=1}^r G_i$ and $G_1, G_2,...,G_r$ are pairwise edge-disjoint subgraphs of $G$. We say that $G$ has a { \it $C_k$-decomposition} if $G$ can be partitioned into edge-disjoint cycles of length $k$, and the existence of such decomposition is denoted as $C_k \rvert G$.
A {\it $1$-factorization} (respectively, near $1$-factorization) of $G$ is the partition of $G$ into edge-disjoint $1$-factors (respectively, near $1$-factors).
 A {\it $C_k$-factorization} of $G$ is the partition of  $G$ into edge-disjoint $C_k$-factors, denoted by $C_k \rVert G$. A near {\it $C_k$-factorization} of $G$ is the partition of  $G$ into edge-disjoint {\it near $C_k$-factors}.
A {\it partial $k$-factor}  of $(K_u \otimes \bar{K}_{g})(\lambda)$ is the $k$-factor of $(K_u \otimes \bar{K}_{g})(\lambda) \setminus V_i$, for some $i \in \{1,2,3,..,u\}$, where $V_1, V_2,V_3,...,V_{u}$ are the partite sets of $(K_u \otimes \bar{K}_{g})(\lambda)$.
A {\it partial $k$-factorization} (respectively, {\it partial $C_k$-factorization} )
$(K_u \otimes \bar{K}_{g})(\lambda)$ is a decomposition of $(K_u \otimes \bar{K}_{g})(\lambda)$ into partial $k$-factors (respectively, partial $C_k$-factors).
 Let $K$ be a set of integers.
A resolvable $K$-cycle systems (respectively, almost resolvable $K$-cycle systems), briefly $K$-RCS (respectively, $K$-ARCS) of $(K_u \otimes \bar{K}_g)(\lambda)$ is a decomposition of $(K_u \otimes \bar{K}_g)(\lambda)$ into $2$-factors (respectively, partial $2$-factors) consists cycles of lengths from $K$. When $K=\{k\},$ we write $K$- RCS as a $k$-RCS and $K$-ARCS as a $k$-ARCS.
A $(k,\lambda)$-modified cycle frame, briefly $(k,\lambda)$-MCF of $(K_u \otimes \bar{K}_{g})(\lambda)$ is a  decomposition of $(K_u \otimes \bar{K}_{g})(\lambda)-gK_u(\lambda)$   into partial $C_k$-factors. 
It is appropriate to mention that $k$-ARCS of $(K_u \times K_g)(\lambda)$  is nothing but a modified cycle frame. One can easily observe that $(K_u \times K_g)(\lambda)$ is an $u$-partite graph with edge multiplicity $\lambda$ and $g$ vertices in each part ; it can also be considered as an $g$-partite graph with edge multiplicity $\lambda$ and $u$ vertices in each part by the commutative property of tensor product. Hereafter, we consider $(K_u \times K_g)(\lambda)$ as a $u$-partite graph with edge multiplicity $\lambda$ and $g$ vertices in each part. 
\par RCS/ARCS research has a direct link to various types of cycle frames. Many mathematicians (Stinson \cite{DR}, Cao et al. \cite{HC2,HC1}, Niu et al. \cite{co}, Chitra et al. \cite{AM}, Muthusamy et al. \cite{AM2} Buratti et al. \cite{MB}) have studied cycle frames, which have connections to many well-known combinatorial problems such as the Oberwolfach problem and the Hamilton waterloo problem. The above facts motivate us to work on RCS/ARCS in this paper.
\par D.R. Stinson \cite{DR} proved that there exists a $3$-ARCS of  $(K_u \otimes \bar{K}_{g})(\lambda)$ if and only if $u \geq 4, \ \lambda g \equiv 0 \ (mod \ 2)$ and $g(u-1)\equiv 0 \ (mod \ 3)$. M. Niu and H. Cao
\cite{co} proved that there exits a $\{3,5\}$-ARCS of $(K_u \otimes \bar{K}_{g})(\lambda)$. Lot of results have been published on  ARCS of $(K_u \otimes \bar{K}_g)(\lambda),$ see  \cite{MB,HC1,AM,AM2}.  RCS of $(K_u \times K_g)(\lambda)$ has been studied by many authors \cite{AB,AS,AS1,GE}.
$\mbox{H. Cao et al.}$ \cite{HC2} proved that there exists a $3$-ARCS of $(K_u \times {K}_{g})(\lambda)$ if and only if $\lambda (g-1) \equiv \ 0 (mod \ 2 )$, $g(u-1) \equiv 0 (mod \ 3)$, $g \geq 3$, and $u \geq 4$, except for $(\lambda,g,u) =\{(1,3,6),(2m+1,6n+3,6)$, $m \geq 0$ and $n \geq 1\}.$ \\
In this paper, we shall prove that the necessary conditions for a $k$-ARCS of $(K_u \times {K}_{g})(\lambda)$ are also sufficient with few possible exceptions.
\par The necessary conditions for a  $k$-ARCS of $(K_u \times {K}_{g})(\lambda)$ are shown in the following Theorem.
\begin{thm} \label{nsty}
For all even integers $ k \geq 4$, if $(K_u \times K_g)(\lambda)$ has a $k$-ARCS, then
\vspace{-.2cm}
\begin{itemize}
\itemsep -.5em 
\item[(i)] $  u \geq 3$ and  $ g \geq 2$ \item[(ii)] $g(u-1) \equiv 0 (mod \ k)$ \item[(iii)]  $ \lambda(g-1) \equiv 0(mod \ 2)$.
\end{itemize}
\end{thm}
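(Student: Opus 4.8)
The plan is to derive each of the three conditions from the basic counting invariants of a $k$-ARCS, viewed as a modified cycle frame of $(K_u \times K_g)(\lambda)$. Recall that $(K_u \times K_g)(\lambda) \cong (K_u \otimes \bar K_g)(\lambda) - gK_u(\lambda)$ is a $u$-partite graph with parts $V_1,\dots,V_u$ of size $g$ and edge-multiplicity $\lambda$, and that a $k$-ARCS partitions its edge set into partial $2$-factors, each of which is a $C_k$-factor of $(K_u \times K_g)(\lambda) \setminus V_i$ for some part $V_i$.

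First I would prove (i). Since the cycles have length $k \ge 4$, every partial $2$-factor must cover at least $k$ vertices; but a partial $2$-factor misses a whole part, so it spans $(u-1)g$ vertices, forcing $(u-1)g \ge k \ge 4$. More importantly, for $K_u \times K_g$ to contain any cycle at all we need $u \ge 3$ (for $u=2$ the tensor product with $K_g$ is disconnected into no nontrivial structure supporting even cycles of the required type; in fact $K_2 \times K_g$ has no edges incident appropriately — more precisely a $u=2$ partite graph cannot be partitioned into $C_k$-factors each missing a part, since removing one of two parts leaves $\bar K_g$, which is edgeless) and $g \ge 2$ (if $g=1$ then $K_u \times K_1$ is edgeless). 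I would state these elementary observations cleanly rather than belabor them.

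Next, condition (ii): fix a part $V_i$ and count the partial $2$-factors that are $C_k$-factors of $(K_u \times K_g)(\lambda)\setminus V_i$. Such a factor covers the $(u-1)g$ vertices outside $V_i$ with vertex-disjoint $k$-cycles, so $k \mid (u-1)g$; this is exactly (iii)... rather (ii). The cleanest route is: each partial $2$-factor missing $V_i$ is a $2$-regular spanning subgraph of the $((u-1)g)$-vertex graph $(K_u \times K_g)(\lambda) \setminus V_i$ whose components are $k$-cycles, hence $k \mid (u-1)g$, i.e. $g(u-1) \equiv 0 \pmod k$.

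For (iii) I would use a degree/edge-count argument. Every vertex $v \in V_i$ has degree $\lambda (u-1)g$ in $(K_u \times K_g)(\lambda)$. In the $k$-ARCS, $v$ lies in some partial $2$-factors (those not missing $V_i$), each contributing $2$ to its degree, and is missed by the rest; so $\lambda(u-1)g$ must be even, i.e. $\lambda(u-1)g \equiv 0 \pmod 2$. Combined with (ii) this is not yet quite (iii), so the sharper argument is needed: consider a single vertex $v\in V_i$ and the set $\mathcal F$ of partial $2$-factors missing $V_j$ for a fixed $j \ne i$. Within the "group" of factors all missing the same part, each covers $v$, contributing $2$ to $\deg(v)$; summing over the $u-1$ choices of the missed part other than $i$ is not how the partition works, so instead I count total edges: $(K_u\times K_g)(\lambda)$ has $\lambda \binom u2 g^2$... the decisive step is that each partial $2$-factor missing $V_i$ uses exactly $(u-1)g$ edges and the number of such factors, times $(u-1)g$, summed over $i$, equals $|E| = \lambda\binom{u}{2}g^2 - $ (the edges inside parts, which are $0$ here) $= \lambda \binom u2 g^2$. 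Then restricting attention to the subgraph induced on $V_i \cup V_j$, which is $\lambda K_{g,g}$ minus nothing, one sees each factor meeting both parts restricts to a $2$-regular subgraph on $V_i$ when it misses no part, forcing $\lambda(g-1)$ even via the standard $K_{g,g}(\lambda)$ near-factorization obstruction. The main obstacle will be isolating the right bipartite restriction: I expect to argue that for $v \in V_i$, the number of partial $2$-factors that do contain $v$ is $\lambda(u-1)g/2$ (from the degree count), while the number missing $V_i$ is determined by (ii); matching these two counts over all $i$ and using $g\equiv$ the residue from (ii) pins down $\lambda(g-1)\equiv 0\pmod 2$. This bipartite parity step is the crux; everything else is bookkeeping.
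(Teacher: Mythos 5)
Your arguments for (i) and (ii) are sound and coincide with the paper's: a partial $C_k$-factor spans the $(u-1)g$ vertices outside one part with disjoint $k$-cycles, so $k \mid g(u-1)$. The genuine gap is in (iii), and it stems from miscounting the graph itself. In $(K_u\times K_g)(\lambda)$ a vertex $(a,b)$ is adjacent to $(a',b')$ only when $a\neq a'$ \emph{and} $b\neq b'$, so its degree is $\lambda(u-1)(g-1)$, not $\lambda(u-1)g$; likewise $|E|=\lambda\binom{u}{2}g(g-1)$, not $\lambda\binom{u}{2}g^2$ (you forgot to subtract the $gK_u(\lambda)$ you yourself invoked in your opening identification), and the subgraph induced on two parts $V_i\cup V_j$ is $\lambda$ times $K_{g,g}$ minus a perfect matching, not $\lambda K_{g,g}$ ``minus nothing''. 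Because of these errors the degree count yields the wrong parity statement, the proposed ``bipartite restriction'' step is invalid as stated (a partial $2$-factor does not restrict to a $2$-regular subgraph on $V_i\cup V_j$, since its edges at a vertex of $V_i$ may leave $V_i\cup V_j$), and condition (ii) does not ``determine'' the number of factors missing $V_i$. You end by conceding the crux (``I expect to argue\dots'') rather than proving it, so (iii) is not established.

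With the correct counts the argument closes immediately, and this is exactly the paper's route: each partial $C_k$-factor has $(u-1)g$ edges, so the total number of partial factors is $\lambda\binom{u}{2}g(g-1)\big/\bigl((u-1)g\bigr)=\lambda u(g-1)/2$; a fixed vertex $v\in V_i$ lies in $\deg(v)/2=\lambda(u-1)(g-1)/2$ of them, hence the number of partial factors missing $V_i$ is the difference, $\lambda(g-1)/2$. Since this count must be an integer, $\lambda(g-1)\equiv 0\ (mod \ 2)$. So your overall plan (edge and degree counting) is the right one, but as written the parity step for (iii) is both numerically wrong and left unfinished.
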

\vspace{-.4cm}
\begin{proof}
Since $ k \geq 4$ is an even integer, it is clear from the definition of  $k$-ARCS that $ u \geq 3$ and $ g \geq 2$. As the existence of $k$-ARCS gives the edge disjoint union of partial $C_k$-factors of $(K_u \times K_g)(\lambda),$ the number of vertices  in $(K_u \times K_g)(\lambda) \setminus V_i,$ for some $i \in \{1,2,...,u\}$, where $V_1,V_2,...,V_u$ are the partite sets of $(K_u \times K_g)(\lambda) $ must be divisible by $k,$ so $g(u-1) \equiv 0(mod \ k).$  Since each partial $C_k$-factor of $(K_u \times K_g)(\lambda)$ consists of $g(u-1)$ edges,
the number of partial $C_k$-factors in $(K_u \times K_g)(\lambda)$ is 
$$ \lambda \frac{\frac{u(u-1)}{2}g^2- \frac{u(u-1)}{2} g}{(u-1)g}=\lambda \frac{u(g-1)}{2}.$$
Hence, there are precisely $ \frac{\lambda(g-1)}{2}$ partial $C_k$-factors correspond to each partite set $V_i, \ i \in \{1,2,...,u\} $.
\end{proof}
\begin{thm} \label{mt}
	For even $k \equiv 0(mod \ 4),$	there exists a $k$-ARCS of $(K_u \times {K}_{g})(\lambda)$ if and only if $u \geq 3$, $g\geq 2$, $\lambda(g-1) \equiv 0 (mod \ 2)$, $g(u-1) \equiv 0 (mod \ k),$   except possibly $(\lambda,k,u) \in \{(2s,4,4x), (2s,4t,8), (2s,4t,4x+2),\ s,t,x \geq 1\}$ and $(\lambda,k,u,g) \in \{(2x+1,4t,8t+1,y), (2x+1,rs,2r+1,sy), \ x \geq 0, \ t \geq 1,  \ r \ \mbox{is even}, \ s\geq 3 \ \mbox{and} \ y \geq 1\ \mbox{are odd} \}.$
\end{thm}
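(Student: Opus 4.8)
The plan is to establish sufficiency by reducing, via a few product-type recursions, to a finite list of explicitly constructed base configurations, with the whole argument organised according to the parity of $\lambda$; note that since $\lambda(g-1)$ must be even, $\lambda$ odd forces $g$ odd.

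First I would record the elementary reductions. Because $(K_u\times K_g)(\lambda_1+\lambda_2)=(K_u\times K_g)(\lambda_1)\oplus(K_u\times K_g)(\lambda_2)$, superposing a $k$-ARCS for $\lambda_1$ on one for $\lambda_2$ yields a $k$-ARCS for $\lambda_1+\lambda_2$; combined with the necessary condition $\lambda(g-1)\equiv 0\pmod 2$, it therefore suffices to construct a $k$-ARCS of $(K_u\times K_g)(2)$ whenever $g$ is even and of $(K_u\times K_g)(1)$ whenever $g$ is odd. Also, $g(u-1)\equiv 0\pmod k$ says precisely that $g$ runs over the multiples of $g_0:=k/\gcd(k,u-1)$.

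Second, I would set up an inflation of the parameter $g$. Using the elementary decomposition $K_{gm}\cong(K_g\otimes\bar{K}_{m})\oplus gK_m$ together with the product-graph identity $(K_u\times K_g)\otimes\bar{K}_{m}\cong K_u\times(K_g\otimes\bar{K}_{m})$ (see \cite{WI}), one gets $(K_u\times K_{gm})(\lambda)\cong\big((K_u\times K_g)(\lambda)\otimes\bar{K}_{m}\big)\oplus g(K_u\times K_m)(\lambda)$. Since $k$ is even, $C_k\otimes\bar{K}_{m}$ has a $C_k$-factorization into $m$ factors; blowing up each partial $C_k$-factor of a $k$-ARCS of $(K_u\times K_g)(\lambda)$ by $\bar{K}_{m}$ and refactoring its $C_k\otimes\bar{K}_{m}$ components takes care of the first summand, while $g$ parallel copies of a $k$-ARCS of $(K_u\times K_m)(\lambda)$, amalgamated over the $g$ subclasses of each part, take care of the second; a count shows that exactly $\lambda(gm-1)/2$ of the resulting partial $C_k$-factors miss each part, as required. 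Iterating this, together with an analogous part-size inflation coming from $K_{g_0t}\cong(K_t\otimes\bar{K}_{g_0})\oplus tK_{g_0}$ and, if needed, a group-divisible-design based recursion in $u$, reduces the problem for fixed $(\lambda,k,u)$ to $g=g_0$, to $g=2g_0$ when $g_0$ is odd, and to a short list of further small seed values of $g$.

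The bulk of the work is the base cases. Here I would use difference methods over $\mathbb{Z}_u$ or $\mathbb{Z}_{u-1}$, feeding in as ingredients $C_k$-factorizations of $K_{n,n}$, of $K_{n,n}$ with a $1$-factor deleted, of $C_m\times K_g$, and of small complete multipartite graphs $K_u\otimes\bar{K}_{g}$ with prescribed partite sets removed, and I would break the constructions up by $u\bmod 4$ (and $u\bmod 8$) and by whether $k\mid u-1$. When $u$ is even I would first peel off a $1$-factor of $K_u$ and deal with its tensor with $K_g$ on its own; when $u$ is odd I would begin from a $2$-factorization of $K_u$ into Hamilton cycles (or from a near-$1$-factorization) and assemble the $C_k$-factors of $(K_u\times K_g)(\lambda)\setminus V_i$ piece by piece.

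The genuine obstacle I expect to be the base cases with $\lambda$ odd (hence $g$ odd): for $k\equiv 0\pmod 4$ and $g$ odd one has $k\nmid 2g$, so no $k$-cycle can lie inside a single pair of parts, and the most flexible building block disappears; one must then thread length-$k$ cycles through three or more parts while still missing exactly one part and meeting each part with multiplicity exactly $\lambda(g-1)/2$. It is the arithmetic of $k$, $u-1$ and $g$ in the families $u=8t+1$, $k=4t$ and $u=2r+1$, $k=rs$, $g=sy$ (and, for $\lambda$ even, in $k=4$ with $u\equiv 0\pmod 4$, in $u=8$, and in $u\equiv 2\pmod 4$) that prevents these assemblies from closing up, and that is exactly where the stated exceptions come from. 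A secondary difficulty is ensuring that the inflation step covers every admissible $g$ and not merely the multiplicative closure of $g_0$, which is what makes the extra small-$g$ seeds necessary.
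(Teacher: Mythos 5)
There is a genuine gap: your text is a strategy outline, not a proof. The entire constructive core of the theorem is deferred to ``difference methods over $\mathbb{Z}_u$ or $\mathbb{Z}_{u-1}$'' with a list of ingredient types, but none of the base configurations is actually produced, and you never verify that the cases your plan cannot reach coincide with the stated exception set rather than some larger one. In the paper this core consists of explicit factorizations -- the partial $C_k$- and $C_{kt}$-factorizations of $K_{k+1}\times K_t$ (Theorems \ref{I1} and \ref{I2}), the $C_{kt}$-factorization of $C_k\times K_t$ and of $C_k\times K_s$ (Lemma \ref{lem}, Theorem \ref{3.7}), and the $C_k$-factorizations of $K_3\times K_{ky}$ (Theorems \ref{nr}, \ref{2=6}) -- which are then assembled, by congruence class of $u$ and $g$, into the families (a)--(i) via Theorems \ref{tt2}--\ref{both even}; the main theorem is exactly this case match plus superposition. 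Without those constructions (or substitutes of equal strength), the claim that the only obstructions are the listed ones is unsupported; ``this is where I expect the exceptions to come from'' is not an argument.

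There is also a structural flaw in your reduction. You reduce to $\lambda=2$ only when $g$ is even and to $\lambda=1$ whenever $g$ is odd; but for even $\lambda$ and odd $g$ this forces you to inherit the $\lambda=1$ exceptions, which are strictly larger than what the theorem allows for even $\lambda$. For instance, $\lambda=2$, $k=4t$, $u=8t+1$, $g$ odd is covered by the paper (Theorem \ref{tt2} applies to every $g\geq 2$ since $u\equiv 1\ (mod\ k)$, working directly from a near $C_k$-factorization of $K_u(2)$), whereas under your scheme it sits inside the $\lambda=1$ exception $(2x+1,4t,8t+1,y)$ and would have to be declared an exception, contradicting the statement you are proving. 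The paper avoids this precisely by building genuinely two-fold constructions for odd $g$ as well (Theorems \ref{tt2}, \ref{tt1}, \ref{pt13}), not just doubling simple systems. Finally, your inflation recursion, as you yourself note, only generates part sizes that are products of admissible ones (multiples of $g_0^2$ rather than of $g_0$), so even the recursive layer is incomplete without the unspecified ``small-$g$ seeds''.
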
 
\section{Preliminaries}
To prove our results we need the following:

\begin{thm}\cite{ob} \label{op} 
	For any odd integer $t \geq 3$, if $u \equiv t(mod \ 2t)$, then $C_t||K_u$.
\end{thm}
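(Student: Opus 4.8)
The plan is to translate the statement into the language of the uniform Oberwolfach problem and then settle the remaining equipartite piece by a product construction. First I would unpack the hypothesis: $u \equiv t \pmod{2t}$ forces $u = t(2j+1)$ for some $j \geq 0$, so $u$ is odd, $t \mid u$, and the cofactor $s := u/t = 2j+1$ is odd. A $C_t$-factor of $K_u$ is a $2$-factor made of $s$ disjoint $t$-cycles, and since $K_u$ is $(u-1)$-regular with $u-1$ even, a $C_t$-factorization is a partition of $E(K_u)$ into exactly $(u-1)/2$ such factors. Thus the claim is precisely that the uniform instance of the Oberwolfach problem, with $s$ tables each of size $t$ on an odd number of vertices, admits a solution.

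My main route would be the decomposition
\[
K_u \;=\; \big(K_s \otimes \bar K_t\big)\ \oplus\ sK_t,
\]
obtained by viewing the $u=st$ vertices as $s$ parts of size $t$: the edges between distinct parts form the complete equipartite graph $K_s \otimes \bar K_t \cong K_{t,t,\dots,t}$, while the within-part edges form $s$ disjoint copies of $K_t$. I would factor the two summands separately and then superimpose their parallel classes. For the term $sK_t$, since $t$ is odd each $K_t$ has a Walecki Hamiltonian decomposition into $(t-1)/2$ Hamilton cycles $C_t$; choosing one Hamilton cycle from each of the $s$ copies and taking their union yields a $C_t$-factor, so $sK_t$ has a $C_t$-factorization with $(t-1)/2$ factors (when $s=1$ this is already the whole proof, the base case $u=t$). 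For the term $K_s \otimes \bar K_t$, which is $(s-1)t$-regular (even, as $s-1$ is even), I would produce a $C_t$-factorization of the complete equipartite graph with $s$ parts of size $t$, giving $(s-1)t/2$ further $C_t$-factors. Since both families consist of $C_t$-factors spanning all $u$ vertices and use disjoint edge sets, their union partitions $E(K_u)$; the count $(t-1)/2 + (s-1)t/2 = (st-1)/2 = (u-1)/2$ confirms it is a full $C_t$-factorization.

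The heart of the argument, and the step I expect to be the main obstacle, is the $C_t$-factorization of the complete equipartite graph $K_s \otimes \bar K_t$. The necessary divisibility and parity conditions ($t \mid st$, $(s-1)t$ even, $t \geq 3$) hold here, and for these all-odd parameters the known small exceptions (which occur only at even part-size) are avoided; I would supply the factorization either by quoting the solution of the equipartite Oberwolfach problem or, constructively, by a starter--adder scheme in $\mathbb{Z}_{st}$ that produces one $C_t$-factor as an orbit base and develops it, taking care that each $t$-cycle meets the parts with the correct multiplicities so that every class is genuinely spanning. As a cross-check and shortcut, the whole statement follows at one stroke from the complete solution of the uniform-length Oberwolfach problem: $K_v$ with $v$ odd has a $C_t$-factorization whenever $3 \le t \mid v$, and since our $u$ is odd the two sporadic non-existent cases (both of even order) never arise. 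I would present the product construction as the self-contained proof and invoke the uniform Oberwolfach solution only as confirmation.
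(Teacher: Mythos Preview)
This theorem is not proved in the paper at all: it sits in the Preliminaries section with the citation \cite{ob} (Alspach, Schellenberg, Stinson, and Wagner) and no accompanying argument. The authors simply import it as a known result and invoke it once, in Case~(i) of Theorem~\ref{pt13}. There is therefore no in-house proof to compare your proposal against.

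For what it is worth, your sketch is sound in outline and is close to the strategy of the cited source itself, which treats exactly the uniform odd-length Oberwolfach problem. The decomposition $K_{st} = (K_s \otimes \bar K_t) \oplus sK_t$ and the Walecki handling of $sK_t$ are correct and standard. The real work, as you rightly isolate, is the $C_t$-factorization of the equipartite piece $K_s \otimes \bar K_t$ with $s$ and $t$ both odd. One caution: appealing at that step to ``the solution of the equipartite Oberwolfach problem'' or to the complete uniform Oberwolfach solution is circular, since the Alspach--Schellenberg--Stinson--Wagner paper you are in effect reproving is a cornerstone of that very development. A genuinely self-contained proof needs the explicit starter--adder (or difference-method) construction over $\mathbb{Z}_{st}$, which is what the original reference actually carries out; short of reproducing that, your proposal is a correct reduction rather than a complete proof.
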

\begin{thm}\label{t1} \cite{MI}
	For $u \geq 3$, $K_u$ has a near $1$-factorization if and only if $ u\equiv 1 (mod \ 2)$.	
\end{thm}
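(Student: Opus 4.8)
The necessity of the four conditions is exactly Theorem \ref{nsty}, so the entire content of the statement lies in the sufficiency, which I would establish by explicit construction. Throughout I would use the identification recorded in the introduction of a $k$-ARCS of $(K_u\times K_g)(\lambda)$ with a $(k,\lambda)$-modified cycle frame of $(K_u\otimes\bar{K}_g)(\lambda)$: a partition of the host graph into partial $C_k$-factors, each of which is a set of vertex-disjoint $k$-cycles covering every part but one. The two arithmetic hypotheses are precisely what make the counting work: $g(u-1)\equiv 0\ (mod\ k)$ guarantees that the $g(u-1)$ vertices outside a missing part split into $k$-cycles, while $\lambda(g-1)\equiv 0\ (mod\ 2)$ guarantees that the number $\tfrac{\lambda(g-1)}{2}$ of partial factors assigned to a fixed part (computed in Theorem \ref{nsty}) is an integer.

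My first reduction would be on the edge-multiplicity. Writing $(K_u\times K_g)(\lambda)=(K_u\times K_g)(\lambda-2\lfloor\lambda/2\rfloor)\oplus\bigl(K_u\times K_g\bigr)(2\lfloor\lambda/2\rfloor)$ and combining this with a doubling construction that turns a $(k,2)$-frame into a $(k,2\mu)$-frame, I would reduce the whole problem to the two base multiplicities $\lambda\in\{1,2\}$. Observe that for odd $\lambda$ the condition $\lambda(g-1)\equiv 0\ (mod\ 2)$ forces $g$ to be odd, which is exactly the regime in which the distance $1$-factors $F_i(A,B)$ of $K_{g,g}$ introduced in the introduction behave uniformly. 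Thus the core of the work is to produce $(k,1)$- and $(k,2)$-modified cycle frames.

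The heart of the construction I would organise according to whether a $k$-cycle fits inside a single pair of parts. When $k\mid 2g$, a $C_k$-factor can be built inside the bipartite graph $K_{g,g}-I$ that sits between two parts: the union $F_i(A,B)\oplus F_j(A,B)$ is a $2$-factor all of whose cycles have length $2g/\gcd(g,j-i)$, so by pairing the available factors $F_1,\dots,F_{g-1}$ so that $\gcd(g,j-i)=2g/k$ one obtains $C_k$-factors of $K_{g,g}-I$. For odd $u$ I would then invoke the near $1$-factorization of $K_u$ (Theorem \ref{t1}): each near $1$-factor $N_i$ misses one part and matches the remaining $u-1$ parts in pairs, so $N_i\times K_g$ is a disjoint union of $\tfrac{u-1}{2}$ copies of $K_{g,g}-I$, and the $C_k$-factors assembled above become partial $C_k$-factors missing part $i$; running over $i$ gives the frame, with exactly $\tfrac{g-1}{2}$ factors per part as required. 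The remaining even-$u$ and even-$\lambda$ subcases in this regime I would treat by combining the doubling construction with ad hoc $C_k$-factorizations of $K_{g,g}-I$ and of $2(K_{g,g}-I)$.

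The genuinely hard regime is $k\nmid 2g$, where no $k$-cycle fits inside two parts and every cycle of a partial factor must thread through several parts simultaneously. Here my plan is to use Theorem \ref{op}: when $u\equiv t\ (mod\ 2t)$ for a suitable odd $t$, $K_u$ admits a $C_t$-factorization, so $K_u\times K_g$ decomposes into edge-disjoint copies of $C_t\times K_g$, and it suffices to build a $(k,\lambda)$-modified cycle frame of the much smaller graph $C_t\times K_g$, viewed as $t$ parts arranged cyclically with a copy of $K_{g,g}-I$ between consecutive ones. These base frames I would construct by a difference-and-rotation method over $\mathbb{Z}_t$ and $\mathbb{Z}_g$: exhibit a few starter $k$-cycles threading the $t-1$ non-missing parts, develop them to fill one partial factor, and then rotate the missing part through all $t$ positions. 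This multi-part threading, while simultaneously keeping the cycles of each factor vertex-disjoint and holding the per-part count equal to $\tfrac{\lambda(g-1)}{2}$, is the step I expect to be the main obstacle; it is also precisely where the uniform construction degrades, which is why the configurations $(\lambda,k,u)\in\{(2s,4,4x),(2s,4t,8),(2s,4t,4x+2)\}$ and $(\lambda,k,u,g)\in\{(2x+1,4t,8t+1,y),(2x+1,rs,2r+1,sy)\}$ resist the argument and are left as possible exceptions.
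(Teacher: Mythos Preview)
Your proposal addresses the wrong statement. Theorem~\ref{t1} is the cited preliminary result that $K_u$ has a near $1$-factorization if and only if $u$ is odd; the paper gives no proof of it (it is quoted from~\cite{MI}), and a proof would consist of a one-line counting argument for necessity together with the classical starter construction $\{\,\{i+j,i-j\}\pmod u : 1\le j\le (u-1)/2\,\}$ for sufficiency. What you have written is instead a sketch of a proof of Theorem~\ref{mt}, the main theorem on the existence of a $k$-ARCS of $(K_u\times K_g)(\lambda)$: you invoke Theorem~\ref{nsty} for necessity, reduce to $\lambda\in\{1,2\}$, and then outline frame constructions that \emph{use} Theorem~\ref{t1} as an ingredient. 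None of this bears on the statement actually under consideration.

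Even setting the mismatch aside, your sketch for Theorem~\ref{mt} diverges substantially from the paper's route. The paper does not argue via $C_k$-factorizations of $K_{g,g}-I$ and pairing of distance $1$-factors, nor via $C_t$-factorizations of $K_u$ from Theorem~\ref{op}; instead it builds explicit partial $C_k$-factorizations of $K_{k+1}\times K_t$ (Theorems~\ref{I1} and~\ref{I2}), $C_k$-factorizations of $K_3\times K_{ky}$ (Theorems~\ref{nr} and~\ref{2=6}), and $C_{kt}$-factorizations of $C_k\times K_s$ (Theorem~\ref{3.7}), and then assembles the general case through a sequence of product decompositions in Section~4. But before any such comparison is meaningful you would first need to supply an argument for the statement actually asked about.
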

\begin{thm} \cite{JB}\label{t4} 
	For $u \geq 3$, $K_u(2)$ has a near $C_{2k}$-factorization if and only if $u \equiv 1 (mod \ 2k)$.
\end{thm}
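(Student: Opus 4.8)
The plan is to prove both implications: necessity by a short count, and sufficiency by developing a single cyclic base factor over $\mathbb{Z}_u$, with essentially all the difficulty concentrated in the construction of that base factor. For necessity, suppose $K_u(2)$ admits a near $C_{2k}$-factorization. By definition each near $C_{2k}$-factor is a $2$-regular spanning subgraph of $K_u(2)\setminus\{v\}$ all of whose components are $2k$-cycles, so it partitions the $u-1$ vertices other than $v$ into cycles of length $2k$; hence $2k\mid(u-1)$, that is $u\equiv 1\pmod{2k}$. A complementary degree count (each vertex has degree $2(u-1)$ in $K_u(2)$, degree $2$ in every factor covering it and $0$ in the unique factor omitting it) shows there are exactly $u$ near-factors and that each vertex is omitted by exactly one of them; this is precisely the configuration the construction will reproduce.

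For sufficiency I would write $u=2km+1$, so $u$ is odd, identify the vertices with $\mathbb{Z}_u$, and build the factorization rotationally. The reduction I would use is that it suffices to exhibit one base near-factor $B$ omitting the vertex $0$: namely, a union of $m$ vertex-disjoint $2k$-cycles on $\mathbb{Z}_u\setminus\{0\}$ whose multiset of edge-differences is exactly $\{1,2,\dots,\tfrac{u-1}{2}\}$ with each value taken twice. Developing $B$ by all translations then produces factors $B, B+1, \dots, B+(u-1)$, where $B+i$ omits vertex $i$, so every vertex is omitted exactly once.

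A difference count finishes this reduction. For each difference $\delta$ with $1\le\delta\le\tfrac{u-1}{2}$, the graph $K_u(2)$ contains exactly $2u$ edge-copies (the $u$ pairs at distance $\delta$, each of multiplicity $2$). The two edges of $B$ of difference $\delta$ develop, under the $u$ translations, into all $u$ such pairs once each from each of the two base edges, i.e. each pair is covered exactly twice; since this matches the available multiplicity and never exceeds it, the $u$ translates are pairwise edge-disjoint and together exhaust $K_u(2)$. Thus a valid base factor $B$ yields the desired near $C_{2k}$-factorization.

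Everything therefore reduces to constructing $B$, which I expect to be the main obstacle. Concretely I must split the doubled difference set into $m$ blocks of size $2k$ and realise each block as a $2k$-cycle in $\mathbb{Z}_u$: for each block one chooses starting vertices and a sign pattern on its $2k$ differences so that the signed sum is $\equiv 0 \pmod u$ (the cycle closes), while the running partial sums are pairwise distinct and avoid $0$ (distinct vertices, none equal to the omitted vertex), and so that the $m$ cycles jointly cover $\mathbb{Z}_u\setminus\{0\}$. This amounts to producing a labelling of $\rho$-type of the $2$-factor $mC_{2k}$. I would supply it by an explicit patterned assignment of the difference-blocks together with a compatible choice of orientations, and verify it through a short residue analysis in $m$ and $k$, checking the few minimal configurations directly. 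The delicate point is finding a uniform pattern that simultaneously closes every cycle, keeps all partial sums distinct and nonzero, and partitions $\mathbb{Z}_u\setminus\{0\}$.
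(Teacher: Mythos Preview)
The paper does not prove this statement at all: Theorem~\ref{t4} is quoted from Burling and Heinrich \cite{JB} as a preliminary, with no argument supplied. The only place the paper touches the content is in Theorems~3.1 and~3.3, where it writes down the explicit near $C_k$-factors of $K_{k+1}(2)$, namely the Walecki-type cycles $G_i=(a_i,a_{i+1},a_{i-1},\dots,a_{i-\frac{k}{2}+1},a_\infty)$ together with $G_\infty=(a_0,\dots,a_{k-1})$; that is exactly your base-factor-plus-rotation scheme in the special case $m=1$ (with the fixed point labelled $\infty$ rather than $0$). So there is no ``paper's proof'' to compare to beyond that single instance.

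On the merits of your proposal: the necessity paragraph and the cyclic reduction are both correct and standard. Where you have a genuine gap is precisely where you flag it yourself. You reduce sufficiency to producing a base near-factor $B$ on $\mathbb{Z}_{2km+1}\setminus\{0\}$ whose $2km$ edges realise each difference in $\{1,\dots,km\}$ exactly twice, and then you stop, saying you ``would supply it by an explicit patterned assignment \dots\ and verify it through a short residue analysis.'' That is the entire content of the theorem; the reduction is routine, the construction is not. For $m=1$ the zig-zag cycle works, but for general $m$ one must simultaneously (a) close $m$ cycles, (b) keep all $2km$ partial sums distinct and nonzero, and (c) hit every nonzero residue exactly once---and a single uniform sign pattern does not do this without further work. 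Burling and Heinrich's argument is not a one-line formula; until you actually exhibit the base factor (or invoke their result), your proof of sufficiency is incomplete.
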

\begin{thm} \cite{MM}\label{t6}
	There exists a  $C_{2m}$-factorization of $K_u(\lambda)$ if and only if $\lambda$ is even and $u \equiv 0 (mod \ 2m)$.	
\end{thm}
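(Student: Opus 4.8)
The final statement (Theorem \ref{t6}) characterizes exactly when $K_u(\lambda)$ admits a $C_{2m}$-factorization: precisely when $\lambda$ is even and $u \equiv 0 \pmod{2m}$. The strategy is to establish the necessity of the two conditions by a direct counting/parity argument, and then build the sufficiency by an explicit construction that combines a $C_{2m}$-factorization of the ``base'' multigraph on $2m$ vertices with a product/recursive assembly across the $u/(2m)$ blocks.

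\emph{Necessity.} First I would record the degree and counting obstructions. A $C_{2m}$-factor is a $2$-regular spanning subgraph whose components are $(2m)$-cycles; hence each $C_{2m}$-factor partitions the $u$ vertices into cycles of length $2m$, forcing $u \equiv 0 \pmod{2m}$. For the parity of $\lambda$: the graph $K_u(\lambda)$ is $\lambda(u-1)$-regular, and a $C_{2m}$-factorization partitions its edge set into $2$-factors, so $\lambda(u-1)$ must be even, i.e. the number of factors is $\lambda(u-1)/2$. When $u$ is even (which is forced by $u\equiv 0 \pmod{2m}$ since $2m$ is even), $u-1$ is odd, so $\lambda(u-1)$ is even if and only if $\lambda$ is even. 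This gives both necessary conditions cleanly.

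\emph{Sufficiency.} Write $u = 2mq$ and assume $\lambda$ even, say $\lambda = 2\mu$. The plan is to decompose $K_u(\lambda)$ into two kinds of pieces and factor each. Partition the $u$ vertices into $q$ blocks of size $2m$. The edges split into (a) the \emph{intra-block} edges, giving $q$ disjoint copies of $K_{2m}(\lambda)$, and (b) the \emph{inter-block} edges, which form a blown-up complete multipartite structure $\bigl(K_q \otimes \bar K_{2m}\bigr)(\lambda)$. For the intra-block part I would invoke the base case $C_{2m}\Vert K_{2m}(\lambda)$ with $\lambda$ even: on $2m$ vertices a single Hamiltonian $(2m)$-cycle is itself a $C_{2m}$-factor, and $K_{2m}(\lambda)$ decomposes into $\lambda(2m-1)/2$ such Hamiltonian cycles by a standard Walecki-type rotation on the $\lambda$-fold complete multigraph (each color class contributing its $2m-1$ Hamiltonian-cycle factorization of $K_{2m}$, which exists since $2m$ is even). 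For the inter-block part, each inter-block $1$-factor of $K_q\otimes\bar K_{2m}$ can be blown up and regrouped so that the $2m$-fold parallel edges between two matched blocks assemble into $C_{2m}$-factors; concretely, using the $F_i(A,B)$ $1$-factor notation from the introduction, $K_{2m,2m}$ decomposes into $2m$ distance-$i$ $1$-factors, and pairing distances $i$ and $i{+}1$ yields Hamiltonian $(4m)$-cycles, while a suitable grouping across the $q$ blocks yields genuine $C_{2m}$-factors on all $u$ vertices simultaneously.

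\textbf{Main obstacle.} The delicate step is (b): ensuring that the inter-block edges, which naturally decompose into $1$-factors of a complete multipartite graph, can be reorganized into \emph{spanning} unions of disjoint $(2m)$-cycles that use all $u$ vertices, rather than merely short cycles confined to pairs of blocks. The clean way around this is to treat $\bigl(K_q\otimes\bar K_{2m}\bigr)(\lambda)$ as a blow-up and apply a $C_{2m}$-factorization result for complete multipartite multigraphs (of the type established elsewhere in this literature, e.g. via the standard reduction $K_q\otimes\bar K_{2m}$ together with the cycle-factorization toolkit), reducing the problem to the already-solved base case on $K_{2m}(\lambda)$. I expect the bulk of the work to be verifying that the two parts, intra-block and inter-block, can be factored \emph{compatibly} so that their $C_{2m}$-factors together exhaust $K_u(\lambda)$; once the base case $C_{2m}\Vert K_{2m}(\lambda)$ and the multipartite factorization are in hand, the recursive assembly for general $q$ is routine bookkeeping.
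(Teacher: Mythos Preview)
This theorem is quoted from \cite{MM} as a preliminary result; the paper provides no proof of it, so there is nothing in the paper to compare your argument against. What follows is therefore an assessment of your sketch on its own terms.

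Your necessity argument is correct and is the standard one: a $C_{2m}$-factor forces $2m \mid u$, and since $u$ is then even, $u-1$ is odd and the $2$-regularity of each factor forces $\lambda(u-1)$ even, hence $\lambda$ even.

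Your sufficiency outline has a genuine slip in the intra-block step. You write that $K_{2m}$ has a Hamiltonian-cycle factorization ``since $2m$ is even''; in fact $K_{2m}$ is $(2m-1)$-regular with odd degree and cannot be decomposed into $2$-factors at all. What is true (and what you need) is that $K_{2m}(2)$ is Hamiltonian-decomposable --- this follows from the Walecki decomposition of $K_{2m}$ into $m-1$ Hamiltonian cycles plus a perfect matching, and then doubling. So the base case $C_{2m}\Vert K_{2m}(\lambda)$ for even $\lambda$ is correct, but the justification you gave for it is wrong.

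For the inter-block part, your plan is workable but under-specified. Appealing to ``a $C_{2m}$-factorization result for complete multipartite multigraphs'' risks circularity, since several such results in the literature are themselves deduced from exactly the theorem you are proving. A self-contained route is available: $(K_q \otimes \bar K_{2m})(2)$ can be handled directly by taking a near-$1$-factorization of $K_q$ (when $q$ is odd) or a $1$-factorization of $K_q$ (when $q$ is even), blowing each matching edge up to a copy of $K_{2m,2m}$, and then invoking Theorem~\ref{mn} to get a $C_{2m}$-factorization of each $K_{2m,2m}$. This is essentially what you gesture at with the $F_i(A,B)$ pairing, and once stated this way the bookkeeping is indeed routine. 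The actual proof in \cite{MM} proceeds differently, via an explicit $1$-rotational starter construction, but your block-decomposition approach can be made to work once the two issues above are repaired.
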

\begin{thm} \cite{KH}\label{t5}
	For any odd $m \geq 3$ and  a non-negative integer $s$, there exists a near $C_m$-factorization of $K_{ms+1}(2)$.
\end{thm}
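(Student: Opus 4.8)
The plan is to realize the factorization by a rotational (difference) construction. I would label the $ms+1$ vertices as $\mathbb{Z}_{ms}\cup\{\infty\}$ and let the group $\mathbb{Z}_{ms}$ act by translation, fixing $\infty$. The $ms+1$ required near $C_m$-factors would then be produced as the orbit of a single \emph{starter} factor together with one exceptional factor: a near $C_m$-factor $B$ that misses the vertex $0$ (so its $s$ cycles span $(\mathbb{Z}_{ms}\setminus\{0\})\cup\{\infty\}$ and $\infty$ lies in exactly one of them), whose translates $B, B+1,\dots,B+(ms-1)$ miss $0,1,\dots,ms-1$ in turn, plus one factor $B_\infty$ consisting of $s$ $m$-cycles on $\mathbb{Z}_{ms}$ that misses $\infty$. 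Since a near $C_m$-factor has exactly $ms$ edges while $K_{ms+1}(2)$ has $ms(ms+1)$ edges, exactly $ms+1$ factors are needed, so this bookkeeping is forced and the orbit-plus-one scheme has the right size.

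It then remains to choose $B$ and $B_\infty$ so that together they partition the edge multiset of $K_{ms+1}(2)$. First I would dispose of the edges at $\infty$: if $a,b\in\mathbb{Z}_{ms}$ are the two neighbours of $\infty$ in $B$, then developing $B$ produces the edges $\{\infty,a+i\}$ and $\{\infty,b+i\}$ for $i\in\mathbb{Z}_{ms}$, and since $a+i$ and $b+i$ each run once over all of $\mathbb{Z}_{ms}$, these hit every pair $\{\infty,x\}$ exactly twice. Thus the doubled star at $\infty$ is covered correctly and automatically, independently of the choice of $a,b$. The remaining task is internal: writing $u=ms$, the $u-2$ edges of $B$ lying in $\mathbb{Z}_u$, developed over $\mathbb{Z}_u$, together with the $u$ edges of $B_\infty$, must cover $K_u(2)$ exactly once. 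I would control this through differences in $\mathbb{Z}_u$: developing an internal edge of difference $d$ yields one full difference class, so the problem reduces to prescribing for the base cycles a multiset of differences whose development leaves precisely a set of $u$ edges which themselves decompose into $s$ $m$-cycles forming $B_\infty$. This is a starter/difference-family problem, and I would assemble the base $m$-cycles explicitly from Skolem- or Langford-type difference sequences so that each class in $\{1,\dots,\lfloor u/2\rfloor\}$ receives the correct multiplicity.

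The main obstacle is exactly this explicit construction of the base cycles: one must lay out $s$ cyclically ordered $m$-tuples that (i) partition $(\mathbb{Z}_u\setminus\{0\})\cup\{\infty\}$, (ii) place $\infty$ in a single cycle, and (iii) realize the prescribed difference multiset, all at once, with the oddness of $m$ entering through the parity of the cycle length and of the admissible difference sums. I expect this to split into cases according to the parity of $s$ (equivalently, since $m$ is odd, whether $u=ms$ is odd or even) and the residue of $m$ modulo small numbers; the even-$u$ case carries the usual extra subtlety that the half-difference $u/2$ develops to only $u/2$ edges and must be matched against the doubled edges of $K_u(2)$ with care. A handful of small orders (small $m$, and $s=1$) are likely not covered by the generic sequence pattern and would be settled by direct construction or computer search, while the degenerate case $s=0$, where $K_1(2)$ has no edges, is vacuous. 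Once the starter $B$ and the leave factor $B_\infty$ are exhibited in every case, developing $B$ over $\mathbb{Z}_{ms}$ and adjoining $B_\infty$ yields the required near $C_m$-factorization of $K_{ms+1}(2)$.
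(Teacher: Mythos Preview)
The paper does not actually supply a proof of this statement: Theorem~\ref{t5} sits in the Preliminaries section and is quoted verbatim from Heinrich, Lindner, and Rodger~\cite{KH}, so there is no in-paper argument to compare against. Your rotational scheme (label the vertices by $\mathbb{Z}_{ms}\cup\{\infty\}$, develop a starter near-$C_m$-factor $B$ missing $0$ through the $ms$ translates, and adjoin a single factor $B_\infty$ missing $\infty$) is in fact the method used in the original reference, so at the level of strategy you are aligned with the source. The bookkeeping you give for the edges at $\infty$ and the reduction of the remaining task to realizing a prescribed multiset of differences on $\mathbb{Z}_{ms}$ is correct.

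That said, what you have written is an outline, not a proof: the entire content lies in the explicit construction of the base cycles, and you defer this to ``Skolem- or Langford-type difference sequences'' together with anticipated case splits on the parity of $s$ and ad hoc treatment of small orders. In~\cite{KH} this is exactly where the work happens, and it is nontrivial: the base cycles are written down explicitly with separate constructions depending on residues, and the verification that the differences cover $K_{ms}(2)$ correctly (including the half-difference when $ms$ is even) occupies most of the paper. So your plan is sound and matches the literature, but to call it a proof you would need to actually exhibit $B$ and $B_\infty$ in each case rather than assert that suitable Skolem-type sequences exist.
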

\begin{thm}\label{mn} \cite{HI}
	$K_{m,n}$ has a $C_k$-factorization if and only if
	\vspace{-.2cm}
	\begin{itemize}
		\itemsep -.5em  
		\item [(i)] $m=n \equiv 0 (mod \ 2)$
		\item [(ii)] $k \equiv 0 (mod \ 2) \geq 4$
		\item [(iii)] $2n \equiv 0 (mod \ k)$
	\end{itemize}
	with precisely one exception, namely $m=n=k=6$.
\end{thm}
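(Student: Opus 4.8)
The plan is to treat the statement as a pure existence (sufficiency) problem: Theorem \ref{nsty} already supplies necessity, so what remains is to construct, for every triple obeying conditions (i)--(iii) and lying outside the listed families, a decomposition of $(K_u\times K_g)(\lambda)=(K_u\otimes\bar{K}_g)(\lambda)-gK_u(\lambda)$ into partial $C_k$-factors, i.e.\ a $(k,\lambda)$-MCF. My first move is a reduction in the multiplicity. Since $(K_u\times K_g)(\lambda_1+\lambda_2)=(K_u\times K_g)(\lambda_1)\oplus(K_u\times K_g)(\lambda_2)$ and the two families of partial $C_k$-factors simply superpose group-by-group (their per-group counts $\tfrac{\lambda_i(g-1)}{2}$ add, in agreement with Theorem \ref{nsty}), it suffices to settle the two base multiplicities $\lambda=1$ (which forces $g$ odd by (iii)) and $\lambda=2$ (any $g$): an odd $\lambda=1+2\cdot\frac{\lambda-1}{2}$ is assembled from one copy of the former and $\frac{\lambda-1}{2}$ of the latter, and an even $\lambda$ from $\frac{\lambda}{2}$ of the latter. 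This already predicts the shape of the exceptional list, since the even families $(2s,\dots)$ must trace back to an obstruction at $\lambda=2$ and the odd families $(2x+1,\dots)$ to one at $\lambda=1$ (whence the restriction to odd $g$ there).

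For the base multiplicities I would build partial factors from two ingredients. The \emph{local} ingredient lives between a pair of groups $V_p,V_q$, where the induced graph $K_2\times K_g$ is the crown $K_{g,g}-F_0(V_p,V_q)=\bigoplus_{i=1}^{g-1}F_i(V_p,V_q)$; pairing distance-$i$ one-factors $F_i,F_j$ yields $2$-regular bipartite subgraphs whose components are even cycles of controllable length, which under the divisibility forced by (ii) can be assembled into $C_k$-factors of copies of the crown, while wherever a genuinely complete $K_{g,g}$ arises Theorem \ref{mn} supplies its $C_k$-factorization outright. The \emph{global} ingredient is distributivity of $\times$ over edge-disjoint unions: writing a chosen decomposition $K_u(\lambda)=\bigoplus_s H_s$ gives $(K_u\times K_g)(\lambda)=\bigoplus_s(H_s\times K_g)$, so each cycle $C_\ell$ in some $H_s$ contributes a block $C_\ell\times K_g$ that I then re-resolve into $C_k$-factors by the local ingredient. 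Crucially, a near-$C_k$-factor (or near-$1$-factor) of the master decomposition omits exactly one vertex of $K_u$, so after inflation it omits exactly one group $V_r$, and its re-resolution produces precisely the partial $C_k$-factors missing $V_r$.

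The construction then splits on the residue of $u$ compelled by $g(u-1)\equiv0\pmod k$ and by parity. When $u\equiv1\pmod k$ I take the master structure to be a near-$C_k$-factorization of $K_u(2)$ (available by Theorem \ref{t4}, whose ``$2k$'' is our even $k$, exactly when $u\equiv1\pmod k$); for the $\lambda=1$ layer I instead want a near-$1$-factorization of $K_u$ (Theorem \ref{t1}, $u$ odd) supplemented by Theorem \ref{t5} for the doubled odd-cycle pieces. I inflate by $K_g$ and re-resolve each $C_\ell\times K_g$ block as above; since each near-factor misses one vertex, every group is missed exactly $\tfrac{\lambda(g-1)}{2}$ times. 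For the remaining residues, where $k\mid g(u-1)$ but $k\nmid(u-1)$, the surplus divisibility is absorbed by the $K_g$-inflation: I decompose $K_u$ into shorter odd cycles via Theorem \ref{op} or into $C_k$-factors of $K_u(\lambda)$ via Theorem \ref{t6} (for even $\lambda$, $u\equiv0\pmod k$), so that the blocks $C_\ell\times K_g$ already carry the correct length and re-resolve into $C_k$-factors. Stitching the per-residue pieces together, while checking at each stage that no edge of the deleted $gK_u(\lambda)$ is ever used, yields the $k$-ARCS in all admissible non-exceptional cases.

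The main obstacle — and the source of every entry in the exceptional list — is the simultaneous demand that the re-resolved blocks have cycles of \emph{exactly} length $k$, that each group be omitted exactly $\tfrac{\lambda(g-1)}{2}$ times, and that the one-factors $F_0(V_p,V_q)$ forming $gK_u(\lambda)$ be entirely avoided. For $\lambda=2$ these constraints force the very congruences of Theorems \ref{t4} and \ref{t6}, and the cases $k=4$ with $u\equiv0\pmod4$, $u=8$, and $u\equiv2\pmod4$ are exactly those where no suitable near-$C_k$- or $C_k$-factorization of $K_u(2)$ is available, giving $(2s,4,4x),(2s,4t,8),(2s,4t,4x+2)$. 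For $\lambda=1$ the same bookkeeping, now confined to odd $g$, stalls because a \emph{simple} near-$C_k$-factorization of $K_u$ for odd $u$ fails on parity grounds (giving $u=8t+1,\,k=4t$) and because the required $C_s$-factorization $C_s\Vert K_{2r+1}$ from Theorem \ref{op} exists only when $2r+1\equiv s\pmod{2s}$ (giving $u=2r+1,\,k=rs,\,g=sy$); these produce $(2x+1,4t,8t+1,y)$ and $(2x+1,rs,2r+1,sy)$ and are left as possible exceptions, since the relevant master factorization, or its re-resolution against the crown deletion, is not available. Verifying that every other triple does admit a coherent choice of master structure together with a valid block re-resolution is the technical heart of the argument.
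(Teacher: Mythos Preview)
Your proposal does not address the stated theorem at all. The statement labeled \ref{mn} is a cited preliminary result (from \cite{HI}) characterizing when the complete bipartite graph $K_{m,n}$ admits a $C_k$-factorization; the paper does not prove it but merely quotes it as a tool. Your write-up is instead a proof sketch for the paper's main Theorem~\ref{mt} on the existence of a $k$-ARCS of $(K_u\times K_g)(\lambda)$: you invoke Theorem~\ref{nsty} for necessity, reduce on the multiplicity $\lambda$, discuss partial $C_k$-factors missing groups $V_r$, and even analyze the exceptional families $(2s,4,4x)$, $(2s,4t,8)$, $(2x+1,4t,8t+1,y)$, etc.\ --- none of which has any bearing on $K_{m,n}$. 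You even cite Theorem~\ref{mn} itself as an ingredient in your argument, which would be circular if you were actually trying to prove it.

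A proof of the stated theorem would have to work inside a single complete bipartite graph: show that the three conditions (i) $m=n$ even, (ii) $k$ even and at least $4$, (iii) $k\mid 2n$ are necessary for a $C_k$-factorization of $K_{m,n}$, then construct such a factorization directly (e.g.\ via difference-type or starter constructions on the parts), and finally exhibit the non-existence at $m=n=k=6$. Your crown/master-decomposition machinery for $(K_u\times K_g)(\lambda)$ is simply the wrong object; please re-read the statement and redirect the argument to $K_{m,n}$.
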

\begin{thm} \label{hc}\cite{PP2}
	For $m,n \geq 1$, the graph $C_{2n+1} \times K_{2m}$ has a Hamilton cycle decomposition.
\end{thm}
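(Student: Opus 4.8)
The plan is to exploit a single ``monotone'' shape for Hamilton cycles, which converts the decomposition into a problem about families of permutations, and to combine this with a decomposition of $K_{2m}$ into cycles and a $1$-factor. Label the vertices of $C_{2n+1}\times K_{2m}$ by $(i,x)$ with $i\in\mathbb{Z}_{2n+1}$ and $x\in\mathbb{Z}_{2m}$, so that $(i,x)$ is adjacent to $(i\pm1,y)$ precisely when $x\neq y$; the edges between consecutive columns $i$ and $i+1$ then form a copy of $K_{2m,2m}$ with a perfect matching deleted. I would first look for Hamilton cycles that, suitably oriented, increase the column index by one at each step. Such a cycle is encoded by fixed-point-free permutations $\pi_0,\dots,\pi_{2n}$ of $\mathbb{Z}_{2m}$ (where $\pi_i$ records which symbol of column $i+1$ follows a given symbol of column $i$), and it is a single Hamilton cycle exactly when the composite $\pi=\pi_{2n}\circ\cdots\circ\pi_0$ is one $2m$-cycle. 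The full decomposition then amounts to finding permutations $\pi_i^{(r)}$, $r=1,\dots,2m-1$, so that in every layer $i$ the matchings $x\mapsto\pi_i^{(r)}(x)$ form a $1$-factorization of $K_{2m,2m}-I$ and each composite $\pi^{(r)}$ is a $2m$-cycle.

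The cleanest realisation uses translations $\rho^{a}\colon x\mapsto x+a$. The layer condition then says that each row of the array $(a_i^{(r)})$ is a permutation of $\{1,\dots,2m-1\}$, and since $\rho^{c}$ is a $2m$-cycle iff $\gcd(c,2m)=1$, the cycle condition becomes $\gcd(\sum_i a_i^{(r)},2m)=1$ for every $r$. When $m$ is odd such an array exists (the required column sums can all be made coprime to $2m$ by a short greedy argument, with the smallest cases such as $n=1$ checked by hand), and one obtains the whole decomposition at once. The same translation idea disposes of the $4$-regular slices $C_{2n+1}\times C_{2m}$ for all $m,n$: each layer offers only $\rho^{+1}$ and $\rho^{-1}$, and taking $\rho^{+1}$ in $n+1$ layers and $\rho^{-1}$ in the other $n$ (with the complementary choice for the second cycle) gives composites $\rho^{\pm1}$, hence two Hamilton cycles. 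Thus, writing $K_{2m}=C^{(1)}\oplus\cdots\oplus C^{(m-1)}\oplus F$ as $m-1$ Hamilton cycles plus a $1$-factor, the subgraphs $C_{2n+1}\times C^{(j)}$ already yield $2(m-1)=2m-2$ Hamilton cycles of the whole graph, leaving only the $2$-factor $C_{2n+1}\times F\cong m\,C_{4n+2}$ to be turned into the last Hamilton cycle.

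The main obstacle is that this last step cannot be carried out purely monotonically. In fact there is a genuine sign obstruction: the product of the signs of the $2m-1$ matchings in any $1$-factorization of $K_{2m,2m}-I$ is an isotopy invariant of the associated Latin square of order $2m$, and for $2m=4$ it is forced to equal $+1$, whereas requiring all $2m-1$ composites to be $2m$-cycles would force the product $(-1)^{2m-1}=-1$. So for $m=2$ no monotone decomposition exists, and some Hamilton cycles must traverse the columns non-monotonically. Concretely, the leftover $C_{2n+1}\times F$ consists of $m$ disjoint cycles $C_{4n+2}$, and it must be merged into one Hamilton cycle by short rerouting exchanges that borrow and repay edges along the cycles already constructed; the oddness of $2n+1$ is exactly what makes such merges possible. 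Performing this merge so that the previously constructed cycles remain Hamiltonian is the delicate technical heart of the proof.

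I would therefore organise the argument by treating $m$ odd through the translation array, and $m$ even through the cycles-plus-$1$-factor decomposition together with an explicit merging scheme; equivalently, by grouping $F$ with one Hamilton cycle $C^{(1)}$ into a $1$-factorable cubic graph and running a three-thread monotone construction on $C_{2n+1}\times(C^{(1)}\oplus F)$, where the full Latin-square sign invariant no longer applies and the obstruction dissolves. The few smallest parameter pairs, in particular $n=1$ with small $m$, I expect to settle by direct construction and then use as seeds. The step I anticipate to be hardest, and where I would concentrate the effort, is making the merging of the $m$ cycles of $C_{2n+1}\times F$ uniform in both $n$ and $m$ while preserving the remaining Hamilton cycles.
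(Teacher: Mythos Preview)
The paper does not supply a proof of this statement: it is quoted from~\cite{PP2} as a preliminary result with no argument, so there is nothing in the present paper to compare your attempt against.

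On its own merits, your proposal is an outline rather than a proof. The monotone-cycle framework is sound, your decomposition of $C_{2n+1}\times C_{2m}$ into two Hamilton cycles via the $\rho^{\pm1}$ choice is correct, and the sign obstruction showing that no purely monotone decomposition of $C_{2n+1}\times K_4$ can exist is a genuine point. But both branches are left incomplete. For $m$ odd you reduce to a $(2m-1)\times(2n+1)$ translation array with all row sums coprime to $2m$; the step $n\mapsto n+1$ is easy (append columns $r$ and $2m-r$, which adds $2m$ to every row sum), yet the base case $n=1$ for arbitrary odd $m$ is only asserted to be ``checked by hand'', and that is not a proof for infinitely many $m$. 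For $m$ even you reduce everything to merging the $m$ disjoint $(4n+2)$-cycles of $C_{2n+1}\times F$ into a single Hamilton cycle while keeping the other $2m-2$ cycles Hamiltonian, and you explicitly call this ``the delicate technical heart'' without carrying it out; without it even $C_3\times K_4$ is not covered. The two missing pieces --- a uniform base construction for odd $m$, and an explicit merge (or a working three-thread scheme on $C^{(1)}\oplus F$) for even $m$ --- are precisely where the substance of the theorem lies.
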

\begin{thm} \label{pten}\cite{MP}
	The graph $C_{2n} \times K_{2m}$ has a Hamilton cycle decomposition.
\end{thm}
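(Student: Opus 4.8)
The plan is to establish sufficiency, since necessity is Theorem \ref{nsty}. The first reduction is on the multiplicity $\lambda$: because the union of a partial $C_k$-factor of $(K_u\times K_g)(\mu)$ with one of $(K_u\times K_g)(\nu)$ is a partial $C_k$-factor of $(K_u\times K_g)(\mu+\nu)$, a $k$-ARCS of $(K_u\times K_g)(\mu+\nu)$ can be assembled from $k$-ARCS of the two summands. Hence it suffices to construct $k$-ARCS for the two base multiplicities $\lambda=2$ (which, taken $s$ times, covers all even $\lambda=2s$) and $\lambda=1$ (which, added to copies of the $\lambda=2$ system, covers all odd $\lambda=2x+1$); condition (iii) of Theorem \ref{nsty} forces $g$ odd in the latter case. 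The listed exception families are precisely the parameter tuples for which the $\lambda=2$ or $\lambda=1$ base construction is unavailable.

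The second reduction is structural and rests on the distributivity of the tensor product over edge-disjoint unions. I would regard $(K_u\times K_g)(\lambda)$ as built over the index graph $K_u$: for each edge $\{i,j\}$ the block between parts $i$ and $j$ is $\lambda(K_2\times K_g)=\lambda(K_{g,g}-F_0)$, and for any decomposition $K_u=\oplus_r H_r$ one has $(K_u\times K_g)(\lambda)=\oplus_r\big(H_r\times K_g\big)(\lambda)$. Two kinds of building blocks then arise. First, the two-part block $\lambda(K_{g,g}-F_0)=\lambda\big(\oplus_{i=1}^{g-1}F_i\big)$ is decomposed into $C_k$-factors by pairing the distance-$i$ one-factors $F_i\oplus F_j$ into $2$-regular bipartite graphs whose cycles have length $2g/\gcd(g,j-i)$, choosing differences so that this length equals $k$; this requires $k\mid 2g$ and explains why odd $\lambda$ needs the $g-1$ factors to pair up, i.e. $g$ odd. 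Second, multi-part blocks come from cycles $C_m$ in $K_u$: the graph $C_m\times K_g$ must be split into $C_k$-factors, and for even $g$ this is delivered by the Hamilton cycle decompositions of $C_{2n}\times K_{2m}$ and $C_{2n+1}\times K_{2m}$ (Theorems \ref{pten} and \ref{hc}), while the complete bipartite pieces arising when parts are grouped are handled by the $C_k$-factorization of $K_{m,n}$ (Theorem \ref{mn}).

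With these blocks in hand, the construction is organized by the residue of $u$ and the parity of $\lambda$. For $u$ odd I would start from a near-$1$-factorization of $K_u$ (Theorem \ref{t1}), so that the near-$1$-factor $N_i$ missing vertex $i$ tensors to a disjoint union of two-part blocks spanning all parts except $i$; decomposing each block into $C_k$-factors then yields exactly the $\lambda(g-1)/2$ partial $C_k$-factors missing part $i$ demanded by Theorem \ref{nsty}, and summing over $i$ recovers the whole graph. When $k\nmid 2g$ the two-part blocks do not suffice, and I would instead near-$C_m$-factorize a suitable multiple of $K_u$ (Theorem \ref{t4} for even $m$, Theorems \ref{t5} and \ref{op} for odd $m$, and Theorem \ref{t6} when $\lambda$ is even) and blow up each near-$C_m$-factor through $C_m\times K_g$ so that the resulting cycles have length $k$ and span the correct number of parts; note that $g(u-1)\equiv 0\pmod k$ matches the divisibility needed for these base factorizations. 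For $u$ even, where $K_u$ admits no near-$1$-factorization, I would route through the $C_{2m}$-factorizations of $K_u(\lambda)$ (Theorem \ref{t6}, even $\lambda$) and through grouping the $u$ parts into a complete bipartite skeleton handled by Theorem \ref{mn}.

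The main obstacle is the regime $k\nmid 2g$, where a partial $C_k$-factor cannot be assembled pairwise between parts and each cycle must wind through several parts at once. One is then forced to realize $C_k$-factors as blow-ups $C_m\times K_g$ with $mg$ a multiple of $k$, and the admissible values of $m$ must simultaneously satisfy $m\mid(u-1)$, be legal cycle lengths for the cited near-cycle and Hamilton factorization theorems, and be compatible with $g(u-1)\equiv 0\pmod k$; in particular odd $g$ blocks the even-second-factor hypotheses of Theorems \ref{hc} and \ref{pten}. I expect it to be exactly the residue classes of $u$ (and, for odd $\lambda$, the factorizations $k=rs$) where no cited theorem supplies a base factorization of the needed length that generate the exceptional families $(\lambda,k,u)\in\{(2s,4,4x),(2s,4t,8),(2s,4t,4x+2)\}$ and $(\lambda,k,u,g)\in\{(2x+1,4t,8t+1,y),(2x+1,rs,2r+1,sy)\}$, with the bulk of the technical effort lying in the explicit small- and base-case constructions (notably $u\in\{3,4,\dots\}$ and the first admissible $g$) from which the general cases are lifted by the two reductions above.
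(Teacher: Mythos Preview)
Your proposal does not address the stated theorem. The statement you were given is Theorem~\ref{pten}, a cited preliminary result from \cite{MP} asserting that $C_{2n}\times K_{2m}$ has a Hamilton cycle decomposition; the paper does not prove it and simply imports it as a known tool. What you have written instead is a proof sketch for Theorem~\ref{mt}, the paper's main result on the existence of a $k$-ARCS of $(K_u\times K_g)(\lambda)$: you discuss necessity via Theorem~\ref{nsty}, reduction on $\lambda$, partial $C_k$-factorizations, and the exception families---none of which is relevant to establishing a Hamilton cycle decomposition of $C_{2n}\times K_{2m}$.

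If your intent was indeed Theorem~\ref{pten}, you would need to work directly with the structure of $C_{2n}\times K_{2m}$ (a $(2m-1)$-regular bipartite graph on $4mn$ vertices) and exhibit $2m-1$ edge-disjoint Hamilton cycles, for instance via $1$-factorizations of $K_{2m}$ and difference methods along the $C_{2n}$ coordinate, as in Manikandan and Paulraja's original argument. If instead you meant to sketch Theorem~\ref{mt}, your outline is broadly in the spirit of the paper's approach (reduce to $\lambda\in\{1,2\}$, decompose $K_u$ or $K_u(2)$ into near-factors and tensor with $K_g$, then invoke the cycle-factorization theorems of Section~2), but you should label it as such.
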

\begin{thm} \label{hc2} \cite{la}
	The graph $C_m \otimes \bar{K}_{n}$ has a Hamilton cycle decomposition.
\end{thm}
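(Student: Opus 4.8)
The plan is to realize $C_m \otimes \bar{K}_n$ concretely as the ``blow-up'' of the cycle: label its vertices by $(i,j)$ with $i \in \mathbb{Z}_m$ (the position on $C_m$) and $j \in \mathbb{Z}_n$ (the copy), so that $(i,j)$ is adjacent to $(i\pm 1,j')$ for every $j' \in \mathbb{Z}_n$ and to nothing else. Thus the graph has $mn$ vertices, is $2n$-regular, and consecutive ``columns'' $V_i=\{(i,j):j\in\mathbb{Z}_n\}$ are joined by a complete bipartite graph $K_{n,n}$; a Hamilton cycle decomposition is precisely a partition of the edge set into $n$ Hamilton cycles. I would first record the identity $C_m \otimes \bar{K}_n = (C_m \times K_n) \oplus D_n$, where $D_n$ is the ``diagonal'' $2$-factor consisting of the $n$ vertex-disjoint $m$-cycles $(0,j)(1,j)\cdots(m-1,j)$, one for each $j$. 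This splits the problem into the tensor part $C_m\times K_n$, for which the cited Theorems~\ref{hc} and \ref{pten} already supply Hamilton decompositions, and the disconnected remainder $D_n$, which must be repaired.

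For the ``monotone'' cycles I would use the $1$-factorization of each gap given by the shift matchings $F^{(t)}\colon (i,j)\mapsto (i+1,j+t)$ (exactly the family $F_t(V_i,V_{i+1})$ from the introduction). Choosing, for each intended cycle $c\in\mathbb{Z}_n$, a shift $t_{i,c}$ in every gap so that $\{t_{i,c}:c\in\mathbb{Z}_n\}=\mathbb{Z}_n$ for each fixed $i$ (the partition requirement), the resulting $2$-factor is a disjoint union of $\gcd(T_c,n)$ equal cycles, where $T_c=\sum_i t_{i,c}\pmod n$; hence it is a single Hamilton cycle exactly when $\gcd(T_c,n)=1$. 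When $n$ is odd this is easy to arrange for all $c$ simultaneously: it suffices to make every shift-sum equal to $1$, which reduces to writing the constant function $1$ as a pointwise sum of $m$ permutations of $\mathbb{Z}_n$, and this is possible for odd $n$ because $\mathbb{Z}_n$ then admits a complete mapping (the same parity phenomenon underlying Theorem~\ref{t1}), from which one builds, inductively, $m-1$ permutations whose pointwise sum is a permutation. This disposes of every odd $n$ with genuinely monotone Hamilton cycles.

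The substantive case is $n$ even, where a parity obstruction forbids an all-monotone solution: summing $T_c$ over the $n$ factors gives $m\binom{n}{2}\equiv m\,\tfrac{n}{2}\pmod n$, which is in general incompatible with all $T_c$ being odd (the smallest instance being the octahedron $C_3\otimes\bar{K}_2$, whose natural monotone decomposition is a $6$-cycle plus two triangles). Here I would start from the $n-1$ Hamilton cycles of $C_m\times K_n$ provided by Theorems~\ref{hc} and \ref{pten} (valid since $n$ is even), together with the extra $2$-factor $D_n$, giving $n$ edge-disjoint $2$-factors of the correct total degree but with $D_n$ disconnected. The remaining task is to absorb the short cycles of $D_n$ into the family, and this is where I expect the real work to lie: I would perform a sequence of $2$-opt edge switches (replacing a pair of edges $ab,cd$ lying in two different current $2$-factors by $ac,bd$, which are again edges of the blow-up), each preserving $2$-regularity and the partition of the edge set, chosen so as to strictly decrease the total number of cycle-components until every $2$-factor is connected, that is, Hamiltonian.

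The main obstacle is precisely this merging step: one must guarantee that component-reducing switches are always available without creating new components elsewhere, and that the process terminates with exactly $n$ Hamilton cycles. I would handle it by a Hilton--Nash-Williams style amalgamation argument (amalgamating the columns and lifting a Hamilton decomposition back up), and verify the finitely many small configurations directly, notably the even ``doubling'' cases $n=2$ such as $C_m\otimes\bar{K}_2$, where $C_m\times K_2$ contributes one $2m$-cycle or two $m$-cycles according to the parity of $m$, by an explicit switch as in the octahedron. Combining the odd-$n$ monotone construction with the even-$n$ repaired family then yields a Hamilton cycle decomposition of $C_m\otimes\bar{K}_n$ for all $m\ge 3$ and $n\ge 1$, the case $n=1$ being $C_m$ itself.
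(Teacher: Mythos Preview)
The paper does not give a proof of this statement: Theorem~\ref{hc2} is quoted from Laskar~\cite{la} as a known result and is used as a black box in Sections~3 and~4. So there is no ``paper's own proof'' to compare your proposal against. On its own merits, your odd-$n$ argument via shift sequences with every total $T_c\equiv 1$ is essentially correct and can be made explicit (for example: for even $m$ pair the permutations as $(c,\,1-c)$ followed by $\tfrac{m-2}{2}$ pairs $(c,\,-c)$; for odd $m\ge 3$ use $(c,\,1-c)$, one triple $(c,\,c,\,-2c)$, and $\tfrac{m-5}{2}$ pairs $(c,\,-c)$ --- all of these are permutations of $\mathbb{Z}_n$ precisely because $n$ is odd). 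That half stands.

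The even-$n$ half, however, is a plan rather than a proof. You correctly isolate the obstruction (the diagonal $2$-factor $D_n$ is disconnected, and the parity count $\sum_c T_c\equiv m\binom{n}{2}$ rules out an all-monotone solution), and the decomposition $C_m\otimes\bar{K}_n=(C_m\times K_n)\oplus D_n$ together with Theorems~\ref{hc} and~\ref{pten} gives you $n-1$ Hamilton cycles plus one bad $2$-factor. But the step you yourself flag as ``where the real work lies'' --- merging the short cycles of $D_n$ into the family by $2$-opt switches, or alternatively by a Hilton--Nash-Williams amalgamation --- is asserted, not carried out. For the switching route you must show that at every stage a switch exists that strictly reduces the total component count without creating new components in the companion $2$-factor; this is a nontrivial invariant to maintain, and your proposal gives no mechanism for it. For the amalgamation route you would need to name a specific detachment/amalgamation theorem and verify its hypotheses for this graph; ``amalgamating the columns and lifting back up'' is not yet an argument. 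As written, the even-$n$ case is a genuine gap.
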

\begin{thm} \cite{PP2}\label{l1}
	If $C_k \rVert G$ and $n|m$, then $C_{kn} \rVert G \times K_m$, where  $m \not \equiv  2(mod \ 4),$ when $k$ is odd.
\end{thm}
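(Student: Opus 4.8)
The plan is to reduce the statement to a single building block and then construct that block from the distance $1$-factors $F_d$ of the introduction. Since $C_k\rVert G$, I would write $G=\bigoplus_{s}\mathcal F_s$ with each $\mathcal F_s$ a $C_k$-factor, i.e. a vertex-disjoint union of $k$-cycles. Commutativity and distributivity of $\times$ over $\oplus$ give $G\times K_m=\bigoplus_{s}\bigoplus_{C\in\mathcal F_s}(C\times K_m)$, an edge-disjoint union of copies of $C_k\times K_m$. Because $C_k\times K_m$ is $2(m-1)$-regular, every $C_{kn}$-factorization of it consists of exactly $m-1$ factors; since all copies of $C_k$ inside a fixed $\mathcal F_s$ have the same length, I can take the $\ell$-th factor from each copy and union them to get the $\ell$-th $C_{kn}$-factor of the spanning subgraph $\mathcal F_s\times K_m$, and the union over $s$ is a full factorization of $G\times K_m$. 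Thus the theorem follows once I prove $C_{kn}\rVert(C_k\times K_m)$ for $n\mid m$, under the stated parity restriction.

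To handle that block I would use the distance (difference) method. Labelling $V(C_k\times K_m)$ by $(i,a)$, $i\in\mathbb Z_k$, $a\in\mathbb Z_m$, the bipartite graph between layers $i$ and $i+1$ is $\bigoplus_{d=1}^{m-1}F_d$, where $F_d$ joins $(i,a)$ to $(i+1,a+d)$. Choosing one distance $d_i$ per layer yields a $2$-factor all of whose cycles have length $k\cdot\operatorname{ord}(\sum_i d_i)$ in $\mathbb Z_m$, which is a $C_{kn}$-factor exactly when $\gcd(\sum_i d_i,m)=m/n$. So a $C_{kn}$-factorization corresponds to $k$ permutations $\pi_0,\dots,\pi_{k-1}$ of $\{1,\dots,m-1\}$ whose row-sums $\sum_i\pi_i(\ell)$ all have order $n$ modulo $m$. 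Summing over $\ell$ gives $\sum_\ell\sum_i\pi_i(\ell)=k\binom{m}{2}\equiv k\,\tfrac m2\pmod m$ when $m$ is even, while each row-sum must be a multiple of $m/n$; the target is attainable only if $m/n\mid k\tfrac m2$, i.e. $kn$ is even. This is automatic for $k$ even, and for $k$ odd it is exactly the arithmetic that makes $m\equiv2\pmod4$ problematic, which is why I would work under $m\not\equiv2\pmod4$.

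To realize these factorizations concretely I would route the cited Hamilton-type results through a block decomposition. Writing $m=nq$ and $K_m=(K_q\otimes\bar K_n)\oplus qK_n$, the identity $A\times(B\otimes\bar K_n)=(A\times B)\otimes\bar K_n$ gives
\[
C_k\times K_m=\big[(C_k\times K_q)\otimes\bar K_n\big]\oplus\Big[\textstyle\bigoplus_{j=1}^{q}(C_k\times K_n)\Big].
\]
For the first summand, a $C_k$-factorization of $C_k\times K_q$ (the $n=1$ instance of the claim at the smaller value $q$) lets me replace each constituent $k$-cycle $C$ by $C\otimes\bar K_n$ and apply Theorem~\ref{hc2}, whose Hamilton decomposition of $C_k\otimes\bar K_n$ splits it into $n$ $C_{kn}$-factors; aligned over the $q$ cycles of each of the $q-1$ factors this yields $n(q-1)$ $C_{kn}$-factors. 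When $n$ is even, Theorems~\ref{hc} and \ref{pten} supply a Hamilton decomposition of each $C_k\times K_n$, contributing $n-1$ further $C_{kn}$-factors from the second summand. The counts add to $n(q-1)+(n-1)=m-1$, exactly as required, so the two families merge into a $C_{kn}$-factorization of $C_k\times K_m$. This settles every case with $n$ even once the $n=1$ sub-problem is in hand, and points to an induction on $m$ with the $n=1$ case as the engine.

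The hard part will be the two places where the cited toolkit runs out. First, the $n=1$ engine, a $C_k$-factorization of $C_k\times K_q$, must be built by hand: single-distance factors are obstructed when $k$ is odd and $q$ is even (indeed $C_k\times K_2\cong C_{2k}$ has no $C_k$-factor), so one must use genuinely multi-distance factors, i.e. solve the permutation/row-sum problem of the second paragraph rather than use a constant distance. Second, odd $n$ falls outside Theorems~\ref{hc} and \ref{pten}, which require an even complete-graph factor, so the diagonal blocks $C_k\times K_n$ with $n$ odd have no ready Hamilton decomposition and must be treated directly via the distance factors $F_d$. I expect the bulk of the work, and the precise role of the hypothesis $m\not\equiv2\pmod4$ for odd $k$, to lie in carrying out this direct construction uniformly: exhibiting the $k$ permutations with all row-sums of order $n$ in every admissible residue class, so that the parity slack afforded by $m$ odd or $4\mid m$ is exactly what the construction consumes.
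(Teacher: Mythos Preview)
This statement is quoted from \cite{PP2} without proof; the paper uses it only as a black box throughout Sections~3 and~4, so there is no in-paper argument to compare your proposal against.

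Your outline is a reasonable reconstruction of how such a result is typically proved. The reduction in your first paragraph to $C_{kn}\rVert(C_k\times K_m)$ is standard and correct, and the block decomposition via $K_m=(K_q\otimes\bar K_n)\oplus qK_n$ together with Theorems~\ref{hc}, \ref{pten}, \ref{hc2} is exactly the recursion one expects (and is close in spirit to what the present paper does in Lemma~\ref{lem} and Theorem~\ref{3.7} for the special cases it needs). One caution: your sentence ``a $C_{kn}$-factorization corresponds to $k$ permutations $\pi_0,\dots,\pi_{k-1}$'' overstates things. That correspondence holds only for the special ansatz in which every $2$-factor is assembled from a single distance class on each layer; arbitrary $C_{kn}$-factorizations need not arise this way. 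Hence the parity obstruction you derive constrains only that ansatz, and does not by itself explain why $m\not\equiv 2\pmod 4$ is genuinely needed for odd $k$. This does no harm to the constructive (sufficiency) direction, which is all the theorem asserts, but it means your heuristic is not a necessity argument.

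The gaps you flag in your last paragraph are real and are exactly where the substance of \cite{PP2} lies: producing the $n=1$ engine (a $C_k$-factorization of $C_k\times K_q$, including the delicate odd-$k$ even-$q$ cases) and handling the diagonal blocks $C_k\times K_n$ for odd $n$. Your permutation/row-sum framework is the right language for both, but exhibiting the permutations is the actual work, and your proposal stops short of doing it. As written, then, this is a plausible plan with the core constructions still missing, rather than a proof.
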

\begin{thm}\cite{PP} \label{f}
	For an odd integer $m \geq 3$ and even integer $k \geq 4,$ we have $C_k \rVert G^k \times K_m$ except possibly for $(k,m)=(4,3),$ where $G^k$ is a cubic graph of order $k.$
\end{thm}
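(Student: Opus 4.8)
The plan is to peel off the odd factor $K_m$ one Hamilton cycle at a time and then to analyse the cubic graph through a cyclic ``column'' model. Since $m$ is odd, $K_m$ decomposes into $\frac{m-1}{2}$ Hamilton cycles, each a copy of $C_m$; as the tensor product distributes over edge-disjoint unions, $G^k\times K_m=\bigoplus_{i=1}^{(m-1)/2}\bigl(G^k\times C_m\bigr)$, where every summand is a spanning subgraph of the common vertex set $V(G^k)\times V(C_m)$. Thus it suffices to exhibit a $C_k$-factorization of $G^k\times C_m$: taking the edge-disjoint union of $\frac{m-1}{2}$ copies then yields the required $\frac{3(m-1)}{2}$ spanning $C_k$-factors of $G^k\times K_m$. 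First I would record that $G^k\times C_m$ is $6$-regular, so any $C_k$-factorization of it must consist of exactly three $C_k$-factors.

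For $G^k\times C_m$ I would set up the column model: identifying $V(C_m)$ with $\mathbb{Z}_m$, the graph becomes $m$ cyclically arranged copies (``columns'') of $V(G^k)$ in which consecutive columns $j$ and $j+1$ are joined by the bipartite connector $B$ with edges $(u,j)(v,j+1)$ for every $uv\in E(G^k)$. The crucial robustness point is that $B$ is a $3$-regular bipartite graph on $2k$ vertices irrespective of whether $G^k$ is Hamiltonian or even has a perfect matching; hence by the $1$-factorability of regular bipartite graphs (K\"onig) one has $B=\beta_1\oplus\beta_2\oplus\beta_3$, and each $\beta_r$ is the graph of a permutation $\pi_r$ of $V(G^k)$ with $u\,\pi_r(u)\in E(G^k)$. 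Choosing, for each consecutive column pair and each of the three factors, one of $\beta_1,\beta_2,\beta_3$ (so that the three choices exhaust $B$ at every pair) produces three edge-disjoint spanning $2$-regular subgraphs partitioning $G^k\times C_m$, and the whole problem reduces to making every cycle in each of them have length exactly $k$.

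The governing length computation is that, if the same permutation $\pi_r$ is used between all consecutive columns, then the cycle through $(u,0)$ has length $\mathrm{lcm}\bigl(m,o_u\bigr)$, where $o_u$ is the length of the cycle containing $u$ in $\pi_r$; for an involution $\pi_r$ (a perfect matching of $G^k$) this already gives the easy relation $C_{2m}\rVert G^k\times C_m$. More generally, any factor built entirely from ``forward'' matchings has all its cycles winding monotonically around the $m$ columns, so their lengths are multiples of $m$; this is satisfactory when $m\mid k$, and when $G^k$ happens to carry a Hamilton cycle the corresponding factor of $C_k\times K_m$ can alternatively be produced directly from Theorem \ref{l1} (with $n=1$), isolating the difficulty in the remaining edges. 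For the generic situation $m\nmid k$ one is forced to abandon pure forward routing and let cycles advance and retreat among a bounded block of columns, closing up after exactly $k$ steps with zero net column displacement; concretely I would combine suitable pairs $\beta_r\oplus\beta_s$ into short even cycles living in two adjacent columns and splice them together across the column cycle so that all the resulting closed curves have length $k$.

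The hard part is precisely this simultaneous length-and-spanning control, and it is governed by the parity clash between the odd number $m$ of columns and the even target $k$: the columns cannot be paired off, so the length-$k$ cycles must be balanced modulo $m$ while being spliced into three mutually edge-disjoint spanning factors. I expect the argument to split into the residue cases $k\equiv 0$ and $k\equiv 2\pmod 4$ and to require separate small-order constructions, with the single genuine failure occurring when the graph is too small to permit any rerouting, namely $(k,m)=(4,3)$, i.e.\ $K_4\times K_3$; that case I would dispatch by a direct verification that no $C_4$-factorization exists. The residual small cubic graphs and small odd $m$ I would settle by explicit factorizations, invoking Theorems \ref{hc}, \ref{pten} and \ref{hc2} to realise the spliced blocks as unions of Hamilton cycles or shorter cycles whenever that is convenient.
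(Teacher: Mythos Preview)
This theorem is not proved in the paper at all: it is quoted as a preliminary result from \cite{PP}, and the paper offers no argument for it. So there is no ``paper's own proof'' to compare against; the relevant comparison is with the original source, and more importantly with how the paper actually \emph{uses} the statement.

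That usage reveals a genuine gap in your reading of the hypothesis. When the theorem is invoked (in the proof of Theorem~\ref{nr}) the paper says explicitly that ``$G$ is a cubic graph having perfect $1$-factorization as defined in \cite{PP}''. Thus $G^k$ is not an arbitrary cubic graph of order $k$; it is a \emph{specific} cubic graph admitting a decomposition into three $1$-factors $M_1,M_2,M_3$ with the property that $M_i\oplus M_j$ is a Hamilton cycle $C_k$ for every $i\neq j$. Your proposal discards this structure, invoking K\"onig on the bipartite connector to produce three matchings $\beta_1,\beta_2,\beta_3$ of no particular shape, and then confronts the resulting length-control problem head-on. You yourself identify that step as ``the hard part'' and do not carry it out; the sketch that ``I expect the argument to split into residue cases'' with unspecified splicing is not a proof.

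By contrast, once one uses the intended hypothesis the length control comes essentially for free. With the three $1$-factors $M_1,M_2,M_3$ of $G^k$ in hand, each column pair in your model carries three canonical matchings (the lifts of $M_1,M_2,M_3$), and because $M_i\oplus M_j\cong C_k$, alternating two of them between adjacent columns already produces $k$-cycles; the construction in \cite{PP} then amounts to choosing, for each of the three $2$-factors, a pattern of these labelled matchings around the odd cycle $C_m$ so that every component is a $C_k$. The exceptional pair $(k,m)=(4,3)$ is listed only as a \emph{possible} exception in \cite{PP}, not as a proven non-existence case, so your plan to ``dispatch it by a direct verification that no $C_4$-factorization exists'' is also off target.
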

\begin{thm}\cite{HC1}\label{6}
	There exists a $6$-ARCS  of $K_4 \otimes \bar{K}_2$.
\end{thm}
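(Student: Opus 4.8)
The plan is to establish sufficiency, since the necessary conditions are exactly Theorem~\ref{nsty}. Two structural facts drive everything. First, $(K_u \times K_g)(\lambda)=K_u(\lambda)\times K_g$, and the tensor product distributes over edge-disjoint unions, so any decomposition $K_u(\lambda)=\bigoplus_j H_j$ induces $(K_u\times K_g)(\lambda)=\bigoplus_j\bigl(H_j\times K_g\bigr)$. Second, a $k$-ARCS is precisely a $(k,\lambda)$-modified cycle frame, that is, a partition into partial $C_k$-factors each missing a single part $V_i$. The plan is therefore a \emph{quotient-and-blow-up} reduction: first decompose the quotient multigraph $K_u(\lambda)$ into near-$C_t$-factors whose holes are single vertices, and then inflate each vertex to a part of size $g$.

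Concretely, let $N$ be a near-$C_t$-factor of $K_u(\lambda)$ missing vertex $i$, so $N$ is a union of $t$-cycles covering the other $u-1$ vertices (forcing $t\mid u-1$). Then $N\times K_g$ lives on $V\setminus V_i$ and splits, cycle by cycle, into copies of $C_t\times K_g$. To convert these into $C_k$-factors I would apply Theorem~\ref{l1} with $G=C_t$: since trivially $C_t\Vert C_t$, it gives $C_{tn}\Vert C_t\times K_g$ whenever $n=k/t$ divides $g$ (and, when $t$ is odd, $g\not\equiv 2\ (\mathrm{mod}\ 4)$). Choosing $t\mid k$ with $(k/t)\mid g$ thus yields $C_k\Vert C_t\times K_g$. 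Reassembling across a fixed hole, the $j$-th $C_k$-factor taken from every cycle-block unites into one partial $C_k$-factor on $V\setminus V_i$; letting $j$ and the near-factor vary partitions $(K_u\times K_g)(\lambda)$ into exactly $\lambda u(g-1)/2$ partial $C_k$-factors, matching the count in Theorem~\ref{nsty}. When $g$ is even I would instead feed the blocks through the Hamilton-cycle decompositions of Theorems~\ref{pten} and~\ref{hc} and the lexicographic decomposition of Theorem~\ref{hc2}, and use the cubic-base result Theorem~\ref{f} where a finer factor length is needed.

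Everything now rests on producing the right near-$C_t$-factorization of $K_u(\lambda)$, which I would obtain from the stated menu after splitting on the parity of $\lambda$ and the residue of $u$. For $u\equiv 1\ (\mathrm{mod}\ k)$ one takes $t=k$, $n=1$: Theorem~\ref{t4} gives a near-$C_k$-factorization of $K_u(2)$, which covers all even $\lambda$ by repetition and serves as the even part when $\lambda$ is odd. For an odd divisor $t\mid k$ with $u\equiv 1\ (\mathrm{mod}\ t)$, Theorem~\ref{t5} supplies a near-$C_t$-factorization of $K_u(2)$, after which $(k/t)\mid g$ has to be arranged (note $k/t$ is even since $4\mid k$). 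For $u$ odd I would combine the near-$1$-factorization of Theorem~\ref{t1} with the odd-length factorizations of Theorem~\ref{op} to build near-$C_t$-factors spanning $u-1$ points, and for $\lambda$ odd I would peel off a single copy of $K_u$ handled in this way and treat the remaining even multiplicity as above. Enlarging $g$ to a multiple is handled by tensoring an existing $k$-ARCS with a complete graph through Theorem~\ref{l1}, while the genuinely small configurations are built by hand, using Theorem~\ref{mn} for the $K_{g,g}$ blocks between paired parts and the base design of Theorem~\ref{6}.

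The main obstacle, and the source of the exceptions, is the regime $u$ even with $\lambda$ even: here $K_u(\lambda)$ admits no near-cycle-factorization with single-vertex holes (Theorem~\ref{t6} only yields \emph{full} $C_{2m}$-factorizations, leaving no vertex uncovered), so the quotient-and-blow-up scheme collapses and the holes must instead be manufactured inside the blow-up via bipartite $C_k$-factorizations between pairs of parts; the parameter triples for which this balancing cannot be completed are exactly $(2s,4,4x)$, $(2s,4t,8)$ and $(2s,4t,4x+2)$. Symmetrically, for $\lambda$ odd the stray single copy can only be absorbed through a near-$1$-factorization, which forces $u$ odd and the divisibility constraints that generate the families $(2x+1,4t,8t+1,y)$ and $(2x+1,rs,2r+1,sy)$. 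I expect essentially all of the technical labour, and every listed exception, to arise from reconciling the two divisibility demands $t\mid k$ and $(k/t)\mid g$ against the quotient factorizations actually available in these boundary parity cases.
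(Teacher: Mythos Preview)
Your proposal does not address the stated theorem at all. The statement you were asked to prove is the preliminary result Theorem~\ref{6}: the existence of a $6$-ARCS of the single graph $K_4\otimes\bar{K}_2$ (an $8$-vertex complete $4$-partite graph). In the paper this is a cited fact from \cite{HC1}, quoted without proof; a self-contained argument would simply exhibit the three partial $C_6$-factors explicitly on the vertex set $\{1,2,3,4\}\times\{0,1\}$, each one a $6$-cycle missing one part of size $2$.

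What you have written is instead a high-level sketch of the main Theorem~\ref{mt}. The tell is that you invoke Theorem~\ref{6} itself as an ingredient (``the base design of Theorem~\ref{6}''), you discuss the full exception list $(2s,4,4x)$, $(2s,4t,8)$, $(2s,4t,4x+2)$, $(2x+1,4t,8t+1,y)$, $(2x+1,rs,2r+1,sy)$, and you run a case analysis on the parity of $\lambda$ and the residue of $u$---none of which is relevant to the fixed instance $k=6$, $u=4$, $g=2$, $\lambda=1$. Your quotient-and-blow-up mechanism cannot even be applied here: with $u=4$ there is no near-$C_t$-factorization of $K_4$ for any $t\geq 3$ dividing $6$ (since $3\nmid 3$ is fine but $K_4$ has odd degree, and $t=6$ exceeds the vertex count), and your own discussion flags even $u$ as the regime where the scheme ``collapses.'' So the proposal is not merely a different route; it is a proof of a different theorem, and its machinery is inapplicable to the target statement.
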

\begin{thm}\cite{HS}\label{ttt}
	$K_{t,t,t}$ has a $C_t$-factorization.
\end{thm}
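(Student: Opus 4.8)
The plan is to prove this by a \emph{rotational} (difference) construction over the cyclic group $\mathbb{Z}_t$ (throughout, $t\geq 3$ so that $C_t$ makes sense). Label the three parts $A=\{a_0,\dots,a_{t-1}\}$, $B=\{b_0,\dots,b_{t-1}\}$, $C=\{c_0,\dots,c_{t-1}\}$ with indices in $\mathbb{Z}_t$, and let $\beta$ be the permutation fixing every $a_i$ and sending $b_j\mapsto b_{j+1}$, $c_k\mapsto c_{k+1}$. Since $\beta$ preserves adjacency it is an automorphism of $K_{t,t,t}$ of order $t$, so $\langle\beta\rangle\cong\mathbb{Z}_t$ acts on the edge set. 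First I would record the elementary fact that in \emph{any} $C_t$-factor of $K_{t,t,t}$ the three edge-types are balanced: if $p,q,r$ denote the numbers of $AB$-, $BC$-, $CA$-edges, then counting degrees on each part gives $p+r=p+q=q+r=2t$, whence $p=q=r=t$. This removes the only global counting constraint and lets the construction focus on cycle structure.

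For the reduction step I would show that it suffices to build a single base $C_t$-factor $\mathcal{F}_0$ (three vertex-disjoint $t$-cycles spanning $A\cup B\cup C$) with two properties: (I) every vertex of $A$ is incident with exactly one $AB$-edge and one $CA$-edge of $\mathcal{F}_0$, and (II) the $t$ edges of $BC$-type in $\mathcal{F}_0$ realise $t$ pairwise distinct differences $k-j$, hence all of $\mathbb{Z}_t$. Granting this, the orbits of $\langle\beta\rangle$ on the edges are the star $\{a_ib_j:j\in\mathbb{Z}_t\}$ at each $a_i$, the star $\{c_ka_i:k\in\mathbb{Z}_t\}$ at each $a_i$, and each $BC$-difference class $\{b_jc_{j+d}:j\in\mathbb{Z}_t\}$; all have size $t$ and $\langle\beta\rangle$ acts freely on each. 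Properties (I) and (II) say precisely that $\mathcal{F}_0$ meets every orbit in exactly one edge, so the $t$ translates $\beta^0\mathcal{F}_0,\dots,\beta^{t-1}\mathcal{F}_0$ are edge-disjoint and cover $E(K_{t,t,t})$; as $\beta$ is an automorphism each translate is again a $C_t$-factor, yielding the desired $C_t$-factorization.

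It remains to construct $\mathcal{F}_0$. When $t\equiv0\pmod 3$ this is easy: take three ``alternating'' cycles running $A\!\to\!B\!\to\!C\!\to\!A\!\to\!\cdots$ of length $t$ (each using $t/3$ vertices of every part), where every $A$-vertex is automatically flanked by one $B$ and one $C$, so (I) holds, and the indices are adjusted only enough to spread the $BC$-edges over all differences to secure (II). When $t\not\equiv0\pmod 3$ the cycles cannot alternate evenly, so I would assemble $\mathcal{F}_0$ from two kinds of length-$t$ gadgets: ``$A$-cycles'' in which each $A$-vertex sits between one $B$ and one $C$ (each such vertex then contributing one $AB$- and one $CA$-edge, giving (I)), together with cycles lying entirely in $B\cup C$ that carry the remaining $BC$-edges with the missing differences. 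For example, for $t=4$ the factor $\{\,a_0b_2a_1c_0,\ a_2b_3a_3c_2,\ b_0c_1b_1c_3\,\}$ has every $a_i$ flanked and $BC$-differences $\{1,0,2,3\}$, so satisfies (I) and (II), and developing it under $\langle\beta\rangle$ produces a $C_4$-factorization of $K_{4,4,4}$.

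The genuine difficulty is this last step for $t\not\equiv0\pmod3$: producing, for every such $t$, a spanning system of vertex-disjoint $t$-cycles that simultaneously (a) keeps each $A$-vertex between a $B$ and a $C$ and (b) makes the $BC$-differences a complete residue system modulo $t$. Because no complete mapping of $\mathbb{Z}_t$ exists when $t$ is even, the $BC$-edges cannot form a perfect matching, so the two edge-types must be distributed unevenly among the $B$- and $C$-vertices; reconciling this with the rigid requirement that \emph{every} cycle have length exactly $t$ is what makes the base factor delicate. I expect to resolve it by an explicit family of base cycles whose description splits according to the parity of $t$ (and possibly $t\bmod 6$), after which the rotational development above finishes the proof uniformly.
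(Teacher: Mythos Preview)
The paper does not prove this theorem at all: it appears in the Preliminaries section as a quotation of a result of Horton, Roy, Schellenberg and Stinson \cite{HS}, and is used later as a black box. There is therefore no ``paper's own proof'' to compare your proposal against.

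On its own merits, your rotational framework is sound. The orbit analysis under the permutation $\beta$ is correct, and the reduction is valid: a base $C_t$-factor $\mathcal{F}_0$ satisfying (I) and (II) really does yield a $C_t$-factorization of $K_{t,t,t}$ via development under $\langle\beta\rangle$. Your $t=4$ example checks out.

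The gap is exactly the one you name yourself. You have not constructed $\mathcal{F}_0$ for general $t$; you have a worked example at $t=4$, a sketch for $t\equiv 0\pmod 3$ with the index choices left unspecified (``adjusted only enough''), and for $t\not\equiv 0\pmod 3$ only the statement that you ``expect to resolve it'' by a case split modulo $6$. That last step is the entire content of the theorem once the rotational reduction is in place, and it is not routine: you correctly observe that for even $t$ the absence of a complete mapping of $\mathbb{Z}_t$ forces the $BC$-edges off a perfect matching, so the cycle architecture must absorb unequal $BC$-degrees while keeping every cycle length exactly $t$ and every $A$-vertex flanked. Until you exhibit an explicit infinite family of base factors (or invoke a known construction that supplies them), what you have is a correct reduction together with an unfulfilled promise, not a proof. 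If you intend to pursue this line rather than cite \cite{HS}, you will need to actually write down the base factors for each residue class of $t$ and verify (I)--(II) in each case.
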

\begin{thm}\cite{MI}\label{k2}
	There exists a partial $1$-factorization of $K_u \otimes \bar{K}_g,$ if and only if $u \geq 3$ and $g(u-1) \equiv 0 \ (mod \ 2)$.
\end{thm}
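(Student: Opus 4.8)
The plan is to dispose of necessity by a short count and then to prove sufficiency by explicitly blowing up the $1$-factorizations available to us, splitting along the parity forced by $g(u-1)\equiv 0\pmod 2$: either $u$ is odd (with $g$ arbitrary), or $u$ is even, which forces $g$ even. For necessity, each partial $1$-factor is by definition a perfect matching of $(K_u\otimes\bar K_g)\setminus V_i$ for some part $V_i$; since this graph has $(u-1)g$ vertices, such a matching can exist only if $(u-1)g$ is even, giving $g(u-1)\equiv 0\pmod 2$. If $u\le 2$, deleting a part leaves the edgeless graph $\bar K_g$, so every partial $1$-factor is empty and the cross-edges cannot be covered; hence $u\ge 3$. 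A count then shows there are $\frac{\binom{u}{2}g^2}{(u-1)g/2}=ug$ partial $1$-factors, i.e.\ exactly $g$ omitting each part, which guides the construction.

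\noindent\textbf{Odd $u$.} First I would take a near $1$-factorization of $K_u$, which exists by Theorem~\ref{t1}, into near $1$-factors $N_0,\dots,N_{u-1}$, with $N_m$ omitting vertex $m$ and perfectly matching the other $u-1$ vertices. Viewing the $u$ vertices as the parts of $K_u\otimes\bar K_g$ and replacing each vertex by its $g$-element set, every edge $\{a,b\}\in N_m$ expands to the complete bipartite graph $K_{g,g}$ between $V_a$ and $V_b$, so the expansion of $N_m$ is a vertex-disjoint union of copies of $K_{g,g}$ covering precisely the parts $\ne m$. Splitting each such $K_{g,g}$ into its $g$ distance one-factors $F_0(A,B),\dots,F_{g-1}(A,B)$ from the Introduction yields $g$ partial $1$-factors omitting $V_m$; letting $m$ range over all parts produces $ug$ edge-disjoint partial $1$-factors that exhaust $K_u\otimes\bar K_g$.

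\noindent\textbf{Even $u$ (so $g=2h$).} Here the near $1$-factorization is unavailable, so I would first shrink the part size. Writing $K_u\otimes\bar K_{2h}=(K_u\otimes\bar K_2)\otimes\bar K_h$, any partial $1$-factorization of $K_u\otimes\bar K_2$ can be expanded by $\bar K_h$ — each matching edge becomes a $K_{h,h}$, which splits into $h$ one-factors — to give a partial $1$-factorization of $K_u\otimes\bar K_g$ with the omitted part preserved; thus it suffices to treat $g=2$. Now $K_u\otimes\bar K_2$ is the cocktail-party graph $K_{2u}-I$, where $I$ is the perfect matching of the $u$ part-pairs $\{m,m+u\}$ on vertex set $\mathbb Z_{2u}$, and a partial $1$-factor omitting part $m$ is exactly a perfect matching of $K_{2u}-I$ avoiding $\{m,m+u\}$. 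The goal becomes decomposing $K_{2u}-I$ into $2u$ such matchings (two per part), for which I would set up an explicit rotational starter on $\mathbb Z_{2u}$.

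\noindent I expect this last step to be the main obstacle. The natural symmetric matching omitting part $0$, pairing $i$ with $-i$, breaks down precisely at $i=u/2$, where $\{u/2,\,3u/2\}$ is itself a forbidden diameter edge of $I$; equivalently, the two vertices of the antipodal part cannot be absorbed by a single rotational factor. The crux is therefore to design the two matchings omitting each part \emph{jointly}, so that together they dispose of the antipodal part's vertices while the whole family stays edge-disjoint and exhaustive. Once a correct starter pair is exhibited for every even $u\ge 4$, developing it over $\mathbb Z_{2u}$ and combining with the two reductions above completes the proof.
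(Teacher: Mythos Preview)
The paper does not prove this statement at all: Theorem~\ref{k2} is quoted from \cite{MI} in the Preliminaries section with no argument supplied, so there is no in-paper proof to compare your proposal against.

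On its own merits, your proposal is sound through the necessity argument, the odd-$u$ construction, and the reduction of the even-$u$ case to $g=2$; those parts are correct and standard. The genuine gap is exactly where you flag it: for even $u$ and $g=2$ you never actually produce the two base partial $1$-factors on $\mathbb{Z}_{2u}$ (or on $\mathbb{Z}_u\times\mathbb{Z}_2$) whose development yields an edge-disjoint family. Saying ``once a correct starter pair is exhibited'' is a promissory note, not a proof, and as you yourself observe, the naive symmetric matching $i\leftrightarrow -i$ fails at the antipodal part, so some real work remains. This case is classical (it is the existence of a $1$-frame of type $2^u$), but the construction is not a one-liner: one typically either builds an explicit pair of holey starters in $\mathbb{Z}_u\times\mathbb{Z}_2$ handling the antipodal part by a local swap that must then be checked for edge-disjointness under development, or one appeals to the known existence of Room frames of type $2^u$ for $u\ge4$. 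Until you either carry out that starter construction in full or cite the relevant frame result, the even-$u$ half of sufficiency is incomplete.
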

\section{Basic Constructions}
\begin{thm}\label{I1}
	There exists a partial $C_k$-factorization of $K_{k+1} \times K_t$, for all odd $t \geq 3$  and $k \equiv 0(mod \ 4)$.
\end{thm}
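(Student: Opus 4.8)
The plan is to obtain the statement for the doubled graph almost for free and then work to remove the doubling. Since $k\equiv 0\pmod 4$ is even and $k+1\equiv 1\pmod k$, Theorem~\ref{t4} gives a near-$C_k$-factorization of $K_{k+1}(2)$; as $K_{k+1}$ has only $k+1$ vertices, every near-$C_k$-factor is a single $k$-cycle, and a degree count (each vertex has degree $2k$, so lies in exactly $k$ of the $k+1$ near-$C_k$-factors) shows that exactly one of them omits each vertex. Thus $K_{k+1}(2)=\bigoplus_{i=1}^{k+1}B_i$ with $B_i\cong C_k$ spanning all parts but $V_i$. Since the tensor product distributes over edge-disjoint unions, $(K_{k+1}\times K_t)(2)=\bigl(K_{k+1}(2)\bigr)\times K_t=\bigoplus_{i=1}^{k+1}\bigl(C_k\times K_t\bigr)$. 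Now apply Theorem~\ref{l1} to the trivial factorization $C_k\rVert C_k$ with $n=1$ (the parity proviso is vacuous because $k$ is even) to get $C_k\rVert C_k\times K_t$; since $C_k\times K_t$ is $2(t-1)$-regular this is a partition into $t-1$ $C_k$-factors, each a partial $C_k$-factor of $(K_{k+1}\times K_t)(2)$ omitting $V_i$. Hence $(K_{k+1}\times K_t)(2)$ admits a partial $C_k$-factorization with exactly $t-1$ partial $C_k$-factors omitting each part.

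What is really needed is the undoubled version: a partial $C_k$-factorization of $K_{k+1}\times K_t$ with $(t-1)/2$ partial $C_k$-factors omitting each part — an integer since $t$ is odd, and the global count $(k+1)(t-1)/2$ is an integer since $k+1$ is odd. I would first reduce: it is classical (Walecki) that for odd $t$ the graph $K_t$ is a union of $(t-1)/2$ Hamilton cycles $C_t^{(1)},\dots,C_t^{((t-1)/2)}$, so $K_{k+1}\times K_t=\bigoplus_r\bigl(K_{k+1}\times C_t^{(r)}\bigr)$ and it suffices to decompose $K_{k+1}\times C_t$ into exactly $k+1$ partial $C_k$-factors, one omitting each part. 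For this I would try a direct combinatorial construction — most naturally a rotational (difference) one: identify the vertex set with $\mathbb Z_{k+1}\times\mathbb Z_t$, let $\mathbb Z_{k+1}$ act on the first coordinate, and produce a short list of base $k$-cycles whose $\mathbb Z_{k+1}$-orbits partition $E(K_{k+1}\times C_t)$, assembling the partial $C_k$-factor that omits $V_i$ from the translates avoiding the coset $\{i\}\times\mathbb Z_t$. The hypothesis $4\mid k$ should enter in balancing the (half-)difference classes modulo $k+1$ — there are $k/2$ of them, an even number — so that the base cycles close after exactly $k$ steps.

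The main obstacle will be exactly this last step: writing down, uniformly in the odd parameter $t$ and in $k\equiv 0\pmod 4$, base cycles of length exactly $k$ whose translates tile every edge of $K_{k+1}\times C_t$ precisely once and group correctly into the $k+1$ partial $C_k$-factors. The bookkeeping — edge-disjointness and completeness of the translates, and the ``omit one part'' structure — is where the work lies, and small values of $k$ (especially $k=4$) will probably need explicit ad hoc decompositions to seed the general pattern; everything before that is routine given Theorems~\ref{t4} and~\ref{l1}.
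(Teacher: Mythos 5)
Your first paragraph is correct but proves a different (weaker) statement: a partial $C_k$-factorization of $(K_{k+1}\times K_t)(2)$, i.e.\ of the graph with every edge doubled. The theorem is about the simple graph $K_{k+1}\times K_t$, and you cannot halve a factorization of the doubled graph in general. Your reduction via Walecki (decompose $K_t$ into Hamilton cycles and handle $K_{k+1}\times C_t$, one partial $C_k$-factor per part) is a reasonable and consistent plan, but you then explicitly stop short of producing the required decomposition of $K_{k+1}\times C_t$ (or the rotational base cycles), and you acknowledge that this is ``where the work lies.'' That missing construction is the entire content of the theorem, so as it stands the proposal has a genuine gap: what is actually established is only the $\lambda=2$ case, which is essentially immediate from Theorems~\ref{t4} and~\ref{l1}.

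For comparison, the paper removes the doubling by an explicit construction in the same spirit as your sketch. It takes the near $C_k$-factors $G_i$, $G_\infty$ of $K_{k+1}(2)$ (each a single $k$-cycle of parts, as you note) and replaces each edge of such a cycle by a distance-$d$ one-factor $F_d(A_a,A_b)$ of the bipartite graph between the corresponding parts, alternating the distances $r$ and $t-r$ around the cycle. Two points make this work: (i) each pair of parts lies in exactly two of the near-factors (edge multiplicity $2$), and the construction assigns distance $r$ in one occurrence and $t-r$ in the other, so letting $r$ run over $1\le r\le \frac{t-1}{2}$ uses every one-factor $F_1,\dots,F_{t-1}$ between each pair of parts exactly once --- this is precisely how the multiplicity-$2$ scaffold yields a decomposition of the simple graph $K_{k+1}\times K_t$; (ii) since the distances alternate $r,t-r$ along a cycle of even length $k$, their sum is $\frac{k}{2}t\equiv 0\pmod t$, so each component closes after $k$ steps and the blown-up factor is a genuine partial $C_k$-factor (missing $A_{i+k/2}$, resp.\ $A_\infty$). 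This is exactly the ``balancing of difference classes'' you predicted would be needed, and it is the step your proposal leaves unexecuted.
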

\begin{proof}
	To prove this Theorem, without loss of generality we consider a  graph $K_{k+1}(2)$ as follows. Let $V(K_{k+1}(2))=\{a_0,a_1,a_2,...,a_{k-1},a_{\infty}\}.$ By Theorem  \ref{t4}, $K_{k+1}(2)$ has a near $C_k$-factorization. The required near $C_k$-factors are given by\\
	$G _i=(a_i,a_{i+1},a_{i-1},a_{i+2},a_{i-2},...,a_{i+\frac{k}{2}-1},a_{i-\frac{k}{2}+1},a_{\infty}),$ $0 \leq i \leq k-1$ and calculations are taken modulo $k$, and $G_\infty=(a_0,a_1,a_2,...,a_{k-1})$.\\
	We now construct the partial $C_k$-factors of $K_{k+1} \times K_t$ as follows:\\
	Set $A_i=\{i\} \times \mathbb{Z}_t,$ for every $i \in \mathbb{Z}_{k} \cup \{\infty\}.$ Also, let $A_0,A_1,...,A_{k-1}, A_\infty$ be the partite sets of $K_{k+1} \times K_t.$\\
	(i) For odd  $i<k$, let $G_i^r= F_r(A_i,A_{i+1}) \oplus F_{t-r}(A_{i+1},A_{i-1}) \oplus... \oplus F_{t-r}(A_\infty,A_i)$, \\
	(ii) For even $i < k$, let $G^r_i= F_{t-r}(A_i,A_{i+1}) \oplus F_{r}(A_{i+1},A_{i-1}) \oplus...\oplus  F_{r}(A_\infty,A_i)$ and \\
	(iii) let $G^r_{\infty}= F_r(A_0,A_1) \oplus F_{t-r}(A_1,A_2) \oplus...\oplus F_{t-r}(A_{k-1},A_0)$,\\ where each $G_i^r$ and $G_\infty^r$ misses $A_{i+\frac{k}{2}}$ and $A_\infty$ respectively, $ 0 \leq i \leq k-1, \ 1 \leq r \leq \frac{t-1}{2}$
	and the additions are taken modulo $k$.\\
	One can check that each $G^r_i,\ i \in \mathbb{Z}_k \cup \{\infty\},$ $ 1 \leq r \leq \frac{t-1}{2}$ is a partial $C_k$-factor of  $K_{k+1} \times K_t$ and hence  $K_{k+1} \times K_t$ has a partial $C_k$-factorization.
\end{proof}
\begin{lem} \label{lem}
There exists a $C_{kt}$-factorization of $C_k \times K_t,$ for all even $k \geq 4,$  and odd $t \geq 3$.
\end{lem}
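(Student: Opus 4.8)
The statement is in fact immediate from Theorem \ref{l1}: take $G=C_k$, so that $C_k\rVert C_k$ holds trivially (the single $k$-cycle is a $C_k$-factor of itself), and apply the theorem with $n=m=t$; since $n\mid m$ and the exceptional clause ($m\not\equiv2\pmod4$ when $k$ is odd) is vacuous here because $k$ is even, we obtain $C_{kt}\rVert C_k\times K_t$. Nevertheless I would also record an explicit factorization, whose concrete cycle factors are convenient for the gluing arguments later in the paper, and the way I would build one is to reduce to a tensor product of two cycles. By Theorem \ref{op} with $u=t$ (admissible since $t\equiv t\pmod{2t}$) we have $C_t\rVert K_t$; that is, $K_t$ decomposes into $\tfrac{t-1}{2}$ Hamilton cycles $C_t^{(1)},\dots,C_t^{((t-1)/2)}$, each isomorphic to $C_t$. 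Since the tensor product distributes over edge-disjoint unions,
\[
C_k\times K_t=\bigoplus_{i=1}^{(t-1)/2}\bigl(C_k\times C_t^{(i)}\bigr)\cong\bigoplus_{i=1}^{(t-1)/2}\bigl(C_k\times C_t\bigr),
\]
so it suffices to give a $C_{kt}$-factorization of $C_k\times C_t$; stacking the $\tfrac{t-1}{2}$ copies then produces the required $t-1$ factors of length $kt$.

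\textbf{The core construction.} Write $V(C_k\times C_t)=\mathbb{Z}_k\times\mathbb{Z}_t$, with $C_k$ carried by $\mathbb{Z}_k$ (edges $\{j,j+1\}$) and $C_t$ by $\mathbb{Z}_t$ (edges $\{a,a+1\}$). Between consecutive layers $\{j\}\times\mathbb{Z}_t$ and $\{j+1\}\times\mathbb{Z}_t$ the graph has exactly the two $1$-factors $F_j^{+}=\{(j,a)(j+1,a+1):a\in\mathbb{Z}_t\}$ and $F_j^{-}=\{(j,a)(j+1,a-1):a\in\mathbb{Z}_t\}$, and it has no other edges. For a sign vector $\varepsilon=(\varepsilon_0,\dots,\varepsilon_{k-1})\in\{+1,-1\}^k$ put $H_\varepsilon=\bigoplus_{j=0}^{k-1}F_j^{\varepsilon_j}$; this is a $2$-regular spanning subgraph, and following it from $(0,a)$ one passes through layers $1,2,\dots,k-1$ and returns to layer $0$ at $(0,a+E)$, where $E=\sum_{j}\varepsilon_j$. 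Hence every component of $H_\varepsilon$ has length $kt/\gcd(E,t)$, so $H_\varepsilon$ is a single $kt$-cycle precisely when $\gcd(E,t)=1$. Now choose $\varepsilon$ with $\tfrac{k}{2}+1$ coordinates equal to $+1$ and $\tfrac{k}{2}-1$ equal to $-1$, so $E=2$; the complementary vector $-\varepsilon$ uses the opposite $1$-factor on every layer and has $E=-2$. Since $t$ is odd, $\gcd(\pm2,t)=1$, so $H_\varepsilon$ and $H_{-\varepsilon}$ are both Hamilton cycles, and $H_\varepsilon\oplus H_{-\varepsilon}=C_k\times C_t$ because between every pair of layers the two together use exactly $F_j^{+}$ and $F_j^{-}$. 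This is the desired $C_{kt}$-factorization.

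\textbf{The hard part.} There is essentially no obstacle; the only point that must go right is that the return offset $E$ be coprime to $t$ while necessarily carrying the parity of $k$, and it is exactly the hypotheses that $k$ is even and $t$ is odd which make $E=\pm2$ both attainable (a sign vector of even length $k\geq2$ can sum to $2$) and coprime to $t$. Working directly inside $C_k\times K_t$ with $1$-factors of all differences $1,\dots,t-1$ would instead require controlling when a sum of $k$ such differences is coprime to a possibly composite $t$; avoiding that is precisely why routing through $C_k\times C_t$, where only the differences $\pm1$ occur, is the efficient choice.
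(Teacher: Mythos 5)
Your argument is correct. Your explicit construction is essentially the paper's own proof with the two coordinates interchanged: the paper likewise reduces $C_k\times K_t$ to $\tfrac{t-1}{2}$ copies of $C_t\times C_k$ via a Hamilton cycle decomposition of $K_t$ and builds exactly two $C_{kt}$-factors per copy out of the distance-$\pm1$ matchings joining consecutive layers; there the layers are indexed by the odd cycle $C_t$, so one trip around uses $\tfrac{t+1}{2}$ matchings of distance $1$ and $\tfrac{t-1}{2}$ of distance $k-1$, giving net displacement $\pm1$ in $\mathbb{Z}_k$ (the factors (\ref{G}) and (\ref{H})), whereas you index the layers by $C_k$ and arrange net displacement $\pm2$, which is coprime to the odd $t$ --- the same mechanism with the roles of $k$ and $t$ swapped. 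Your opening observation is the genuinely different (and shorter) route: taking $G=C_k$ and $n=m=t$ in Theorem \ref{l1} already yields $C_{kt}\rVert C_k\times K_t$, the proviso $m\not\equiv 2\pmod 4$ being irrelevant since $k$ is even; the paper does not remark on this. What the one-line proof does not provide is a concrete pair of factor types playing the roles of $\mathcal{G}$ and $\mathcal{H}$, which the paper invokes separately in the proof of Theorem \ref{I2}; your $H_\varepsilon$ and $H_{-\varepsilon}$ would serve that purpose equally well. A small further merit of your write-up is that the Hamilton decomposition of $K_t$ (via Theorem \ref{op}) is cited explicitly, whereas the paper uses it tacitly.
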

\begin{proof}
We can write
\begin{eqnarray}
C_k \times K_t & \cong& C_k \times \{C_t^1 \oplus...\oplus C_t ^{\frac{t-1}{2}}\} \nonumber\\
& \cong & \oplus_{i=1}^{\frac{t-1}{2}}(C_k \times C_t^i) \nonumber\\
& \cong & \oplus_{i=1}^{\frac{t-1}{2}}(C_t^i \times C_k) \nonumber
\end{eqnarray}
We know that $C_t^i \times C_k \cong C_t \times C_k$ and find its $C_{kt}$-factors as follows:\\
Let $V(C_t)=\{a_0,a_1,...,a_{t-1}\}$ then $V(C_t \times C_{k})=\{A_i| 0 \leq i \leq t-1\},$ where $A_i=a_i \times \mathbb{Z}_{k} =\{a_i^0,a_i^1,...,a_i^{k-1}\}.$ Let
\begin{eqnarray}
\mbox{(i)}  \ \hspace{1cm} {G}&=&\{ \oplus_{i=0}^{\frac{t-1}{2}} F_1(A_{2i},A_{2i+1})\} \oplus \{\oplus_{i=0}^{\frac{t-3}{2}} F_{k-1}(A_{2i+1},A_{2i+2})\}  \ \  \mbox{and} \label{G}\\
	\mbox{(ii)}  \hspace{1cm} {H}&=& \{\oplus_{i=0}^{\frac{t-1}{2}} F_{k-1}(A_{2i},A_{2i+1})\} \oplus \{\oplus_{i=0}^{\frac{t-3}{2}} F_{1}(A_{2i+1},A_{2i+2})\}, \label{H}
\end{eqnarray}

where the subscripts of $A$ are taken modulo $t$. One can check that both $\mathcal{G}$ and $\mathcal{H}$ are the $C_{kt}$-factors of $C_t \times C_k$. Thus each $C_t^i \times C_k$ has $2$ $C_{kt}$-factors and hence together gives a $C_{kt}$-factorization of $C_k \times K_t.$
\end{proof}
\begin{thm}\label{I2}
	There exists a partial $C_{kt}$-factorization of $K_{k+1} \times K_t,$ for all even $k \geq 4,$  and odd $t \geq 3$.
\end{thm}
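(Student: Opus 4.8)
The plan is to combine Theorem \ref{I1} with Lemma \ref{lem} in a ``blow up then refine'' argument. By Theorem \ref{I1}, $K_{k+1}\times K_t$ admits a partial $C_k$-factorization, say $\mathcal{F}=\{G^r_i : i\in\mathbb{Z}_k\cup\{\infty\},\ 1\le r\le \tfrac{t-1}{2}\}$, where each $G^r_i$ is a $2$-regular spanning subgraph of $(K_{k+1}\times K_t)\setminus A_{j}$ for an appropriate missed part $j$, and each component of $G^r_i$ is a $C_k$. The idea is \emph{not} to tensor this with another $K_t$, but rather to observe that each partial $C_k$-factor $G^r_i$, being a disjoint union of $k$-cycles, is itself a graph to which Lemma \ref{lem} applies: a disjoint union of copies of $C_k\times K_t$ has a $C_{kt}$-factorization whenever $k\ge 4$ is even and $t\ge 3$ is odd. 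So the first step is to set up the ambient graph correctly.

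First I would fix the vertex set $V(K_{k+1})=\mathbb{Z}_k\cup\{\infty\}$ and build $K_{k+1}\times K_t$ exactly as in Theorem \ref{I1}, with partite sets $A_i=\{i\}\times\mathbb{Z}_t$. The key structural remark is that $K_{k+1}\times K_t$ decomposes into the partial $C_k$-factors $G^r_i$ from Theorem \ref{I1}; I then take a \emph{second} copy of the vertex labels, i.e. I want to exhibit a partial $C_{kt}$-factorization, which requires cycles of length $kt$ spanning $(K_{k+1}\times K_t)\setminus A_j$ on $kt$ vertices for the missed part $A_j$. Since $|V(K_{k+1}\times K_t)\setminus A_j| = kt$, a partial $C_{kt}$-factor is a \emph{single} Hamilton cycle of length $kt$ on the non-missed vertices. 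So the statement to prove is: $(K_{k+1}\times K_t)\setminus A_j$ has a decomposition into Hamilton cycles, for each $j$ — more precisely, a decomposition of the whole $K_{k+1}\times K_t$ into such near-Hamilton cycles, one missing each $A_j$ with the right multiplicity $\tfrac{t-1}{2}$ per part.

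The natural route is to refine each partial $C_k$-factor $G^r_i$ of $K_{k+1}\times K_t$ separately. Here is where I would invoke the commutativity/distributivity of $\times$ over edge-disjoint unions together with Theorem \ref{I1} applied at a coarser level: think of $K_{k+1}\times K_t$ as $(K_{k+1}\times K_t)$ where the ``$K_t$'' is split as $C_t^1\oplus\cdots\oplus C_t^{(t-1)/2}$ (since $t$ is odd, $K_t$ is a union of $\tfrac{t-1}{2}$ Hamilton cycles $C_t^\ell$). Distributivity gives $K_{k+1}\times K_t \cong \bigoplus_{\ell=1}^{(t-1)/2}(K_{k+1}\times C_t^\ell)$. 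Now for each $\ell$, $K_{k+1}\times C_t^\ell$ is a graph on $(k+1)t$ vertices; I claim it has a partial $C_{kt}$-factorization. To see this, use that $K_{k+1}$ (on $\mathbb{Z}_k\cup\{\infty\}$) has the near-$C_k$-factorization $\{G_i : i\in\mathbb{Z}_k\}\cup\{G_\infty\}$ of Theorem \ref{t4}'s construction — wait, that is a near-$C_k$-factorization of $K_{k+1}(2)$, so over two copies — and tensoring a near-$C_k$-factor $G_i$ of $K_{k+1}$ with $C_t^\ell$ gives $G_i\times C_t^\ell$, a disjoint union of copies of $C_k\times C_t$. By Lemma \ref{lem} (or rather its proof, which shows $C_k\times C_t$ has a $C_{kt}$-factorization into $2$ factors), each such $G_i\times C_t^\ell$ splits into $C_{kt}$-factors, and since $G_i$ missed one vertex class of $K_{k+1}$, the product misses the corresponding $A_j$, yielding partial $C_{kt}$-factors. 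Summing over $i\in\mathbb{Z}_k\cup\{\infty\}$ and $\ell\in\{1,\dots,\tfrac{t-1}{2}\}$ assembles all of $K_{k+1}\times K_t$.

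The main obstacle I anticipate is bookkeeping the edge-multiplicity correctly: Theorem \ref{t4} gives a near-$C_{2k}$-factorization of $K_u(2)$, i.e.\ the near-$C_k$-factorization of $K_{k+1}$ naturally lives in the doubled graph, so when I tensor with $C_t^\ell$ I must check whether I am decomposing $K_{k+1}\times K_t$ or $(K_{k+1}\times K_t)(2)$, and reconcile the cycle counts $\tfrac{t-1}{2}\cdot(k+1)$ against the count $\tfrac{(k+1)(t-1)}{2}$ of partial $C_k$-factors from Theorem \ref{I1}. Concretely, I expect the cleanest proof to go \emph{directly} through Theorem \ref{I1}: take its partial $C_k$-factorization $\{G^r_i\}$, and for each $G^r_i$ — a vertex-disjoint union of $t$ cycles $C_k$, hence isomorphic to $t\cdot C_k$ on the $kt$ non-missed vertices — I do \emph{not} have room to further tensor. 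Instead I should apply Lemma \ref{lem} to $C_k \times K_t$ at the very start: write $K_{k+1}\times K_t$, pick the near-$1$-factorization structure so that $K_{k+1}\times K_t$ contains edge-disjoint copies of $C_k\times K_t$ whose union (over all parts missed, with multiplicity) is everything, then apply Lemma \ref{lem} to each. Verifying that such a ``$C_k\times K_t$-decomposition of $K_{k+1}\times K_t$ into partial pieces'' exists — equivalently that $K_{k+1}$ has a near-$C_k$-factorization in the single (not doubled) graph, which fails since $K_{k+1}$ is $k$-regular and $k$ even forces this — is exactly the subtlety, and it is resolved precisely by passing through $K_{k+1}(2)$ as Theorem \ref{I1} already does; so the honest statement is that the partial $C_k$-factors of Theorem \ref{I1} each decompose, via the $C_{kt}$-factorization of $C_k\times K_t$ in Lemma \ref{lem} applied componentwise after re-expanding, into partial $C_{kt}$-factors, and collecting these over all $(i,r)$ completes the decomposition. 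I would write that reconciliation carefully and treat it as the crux of the argument.
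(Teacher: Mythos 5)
You assemble the right ingredients (the near-$C_k$-factorization of $K_{k+1}(2)$ from Theorem \ref{t4}, the Hamilton cycle decomposition of $K_t$ for odd $t$, and Lemma \ref{lem}), and you correctly identify the crux: tensoring every near-$C_k$-factor $G_i$, $i\in\mathbb{Z}_k\cup\{\infty\}$, with $K_t$ (or with all the cycles $C_t^\ell$) and keeping \emph{all} resulting $C_{kt}$-factors decomposes the doubled graph $(K_{k+1}\times K_t)(2)$, not $K_{k+1}\times K_t$. But you never resolve this. Your final ``honest statement'' --- that each partial $C_k$-factor $G^r_i$ of Theorem \ref{I1} ``decomposes, via the $C_{kt}$-factorization of $C_k\times K_t$ in Lemma \ref{lem} applied componentwise after re-expanding, into partial $C_{kt}$-factors'' --- cannot work: $G^r_i$ is a vertex-disjoint union of $t$ cycles of length $k$, so it has only $kt$ edges and contains no cycle of length $kt$ at all; it is not a union of copies of $C_k\times K_t$ (that graph has $kt(t-1)$ edges), so there is nothing to which Lemma \ref{lem} can be applied inside a single $G^r_i$. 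A partial $C_{kt}$-factor here is a Hamilton cycle of $(K_{k+1}\times K_t)\setminus A_j$, and such cycles must use edges lying above many different edges of $K_{k+1}$; they cannot be extracted from Theorem \ref{I1}'s output by any ``refinement''.

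The missing idea, which is how the paper proceeds, is a \emph{selection of halves}. For each $i\in\mathbb{Z}_k\cup\{\infty\}$ one has $G_i\times K_t\cong C_k\times K_t$, and Lemma \ref{lem} gives its $C_{kt}$-factors in two complementary families, namely the factors of type (\ref{G}) and those of type (\ref{H}), each family containing $\frac{t-1}{2}$ factors and covering exactly half of the edges of $G_i\times K_t$. Since every edge of $K_{k+1}$ occurs in exactly two of the near-factors of $K_{k+1}(2)$, one keeps only the type-(\ref{G}) factors $G_{i,j}$ from $G_i\times K_t$ for $i\in\mathbb{Z}_k$ and only the type-(\ref{H}) factors $H^{\infty}_j$ from $G_\infty\times K_t$; the two coverings of each doubled edge are then split complementarily, every edge of the simple graph $K_{k+1}\times K_t$ is used exactly once, and the $(k+1)\frac{t-1}{2}$ chosen factors (each a single $kt$-cycle missing $A_{i+\frac{k}{2}}$, respectively $A_\infty$) form the desired partial $C_{kt}$-factorization --- the exact analogue of the $F_r$ versus $F_{t-r}$ device in Theorem \ref{I1}. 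Your paragraph setting up $G_i\times C_t^\ell\cong C_k\times C_t$ was one sentence away from this (keep one of the two Hamilton factors of each $C_k\times C_t$, with the choice governed by $i$), but as written the proposal either decomposes the wrong (doubled) graph or relies on an impossible refinement, so it has a genuine gap.
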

\begin{proof}
By Theorem  \ref{t4}, $K_{k+1}(2)$ has a near $C_k$-factorization. The required near $C_k$-factors are given by\\
	$G _i=(a_i,a_{i+1},a_{i-1},a_{i+2},a_{i-2},...,a_{i+\frac{k}{2}-1},a_{i-\frac{k}{2}+1},a_{\infty}),$ $0 \leq i \leq k-1$ and calculations are taken modulo $k$, and $G_\infty=(a_0,a_1,a_2,...,a_{k-1})$.	We can write 
	\begin{eqnarray}
	K_{k+1} (2) \times K_t & \cong&  \oplus_{i=0}^{k-1} (G_i \oplus G_ \infty) \times K_t \ \mbox{by Theorem \ref{t4} } \nonumber\\
	& \cong&  \oplus_{i=0}^{k-1} (G_i \times K_t) \oplus (G_ \infty \times K_t)
	\end{eqnarray}
By (\ref{G}) of Lemma \ref{lem}, $G_i \times K_t \cong C_k \times K_t$  has a $C_{kt}$-factorization $\mathcal{G}_i=\{G_{i,1},G_{i,2},...,G_{i, \frac{t-1}{2}}\}$, for every $ i \in \mathbb{Z}_k$.\\
Considering that $G_\infty \times K_t \cong C_k \times K_t$ and by   (\ref{H}) of Lemma \ref{lem},  $G_\infty \times K_t$ has a $C_{kt}$-factorization $\mathcal{G}_\infty=\{H^{\infty}_{1},H^{\infty}_{2},...,H^{\infty}_{ \frac{t-1}{2}}\}$.\\
One can check that $\mathcal{G}=\{G_{i,j} \oplus H^{\infty}_{j} | \ i \in \mathbb{Z}_k, \ 1 \leq j \leq \frac{t-1}{2}\}$ gives a partial $C_{kt}$-factorization of $K_{k+1} \times K_t$.  
\end{proof}
\begin{thm}\label{nr}
	There exists a $C_k$-factorization of $K_3 \times K_{ky}$ for all even $k \geq 6$ and all odd $y\geq 1$.
\end{thm}
\begin{proof}
We establish the proof in two cases\\
{\bf Case (i): $y=1$}
\begin{eqnarray}
	K_3 \times K_k &\cong& K_k \times K_3 \label{eq2}\\
	&\cong&(  \underbrace{C_k\oplus ...\oplus C_k}_{\frac{k}{2}-2 \ \  \mbox{times}} \oplus G ) \times K_3, \ \mbox{by walecki's contruction} \nonumber\\
	&\cong& (C_k \times K_3) \oplus...\oplus (C_k \times K_3)\oplus (G \times K_3),\nonumber
	\end{eqnarray}
	where $G$ is a cubic graph having perfect $1$-factorization as defined in \cite{PP}.\\
	One can check that each $C_k \times K_3$ has $2$ $C_k$-factors by Theorem \ref{l1} and  $G \times K_3$ has  $3$ $C_k$-factors by Theorem \ref{f}. Hence $K_3 \times K_k$ is factorable into $(k-1) \ C_k$-factors. \\
{\bf Case (ii): $y \geq 3$ is odd}\\
	We can write
	\begin{eqnarray}\label{eq1}
	K_3 \times K_{ky} \cong \{(K_3 \times K_y) \otimes \bar{K}_k\} \oplus y (K_3 \times K_k)
	\end{eqnarray}
	Graphs in the R.H.S of (\ref{eq1}) can be obtained by making $y$ holes of type $K_3 \times K_k$ in $K_3 \times K_{ky}$ and identifying each $k$-subsets of $K_{ky}$ (in the resulting graph) into a single vertex and two of them are adjacent if the corresponding $k$-subsets form a $K_{k,k}$ in $K_3 \times K_{ky}.$ By expanding the vertices into $k$-subsets we get the first graph of  (\ref{eq1})\\
	Now, we construct the  $C_k$-factors of $(K_3 \times K_y) \otimes \bar{K}_k$   as follows:
	\begin{eqnarray}
	(K_3 \times K_y) \otimes \bar{K}_k&\cong& (C_3^1 \oplus C_3^2 \oplus...\oplus C_3^{y-1}) \otimes \bar{K}_k, \ \mbox{by Theorem \ref{l1}} \nonumber\\
	&\cong& (C_3^1 \otimes \bar{K}_k )\oplus...\oplus (C_3^{y-1} \otimes \bar{K}_k), \nonumber
	\end{eqnarray}
	each $C_3^i, \ 1\leq i \leq y-1, $ is a $C_3$-factor of $K_3 \times K_y$. One can observe that $C_3^i \otimes \bar{K}_k \cong y(C_3 \otimes \bar{K}_k) \cong y (K_{k,k,k})$. By Theorem \ref{ttt}, each $K_{k,k,k}$ is factorable into $k \ C_k$-factors. Thus we get $k(y-1)$ $C_k$-factors of $(K_3 \times K_y) \otimes \bar{K}_k$. By (\ref{eq2}), each $K_3 \times K_k$ is factorable into $(k-1) \ C_k$-factors. 
	Finally, the two graph in   (\ref{eq1}), together gives $(ky-1) \ C_k$-factors of $K_3 \times K_{ky}$. Hence, $C_k$-factorization of  $K_3 \times K_{ky}$ exists.
\end{proof}
\begin{thm}\label{nr}
	There exists a $C_k$-factorization of $K_3 \times K_{ky}$ for all even $k \geq 6$ and all odd $y\geq 1$.
\end{thm}
\begin{proof}
We establish the proof in two cases\\
{\bf Case (i): $y=1$}
\begin{eqnarray}
	K_3 \times K_k &\cong& K_k \times K_3 \label{eq2}\\
	&\cong&(  \underbrace{C_k\oplus ...\oplus C_k}_{\frac{k}{2}-2 \ \  \mbox{times}} \oplus G ) \times K_3, \ \mbox{by walecki's contruction} \nonumber\\
	&\cong& (C_k \times K_3) \oplus...\oplus (C_k \times K_3)\oplus (G \times K_3),\nonumber
	\end{eqnarray}
	where $G$ is a cubic graph having perfect $1$-factorization as defined in \cite{PP}.\\
	One can check that each $C_k \times K_3$ has $2$ $C_k$-factors by Theorem \ref{l1} and  $G \times K_3$ has  $3$ $C_k$-factors by Theorem \ref{f}. Hence $K_3 \times K_k$ is factorable into $(k-1) \ C_k$-factors. \\
{\bf Case (ii): $y \geq 3$ is odd}\\
	We can write
	\begin{eqnarray}\label{eq1}
	K_3 \times K_{ky} \cong \{(K_3 \times K_y) \otimes \bar{K}_k\} \oplus y (K_3 \times K_k)
	\end{eqnarray}
	Graphs in the R.H.S of (\ref{eq1}) can be obtained by making $y$ holes of type $K_3 \times K_k$ in $K_3 \times K_{ky}$ and identifying each $k$-subsets of $K_{ky}$ (in the resulting graph) into a single vertex and two of them are adjacent if the corresponding $k$-subsets form a $K_{k,k}$ in $K_3 \times K_{ky}.$ By expanding the vertices into $k$-subsets we get the first graph of  (\ref{eq1})\\
	Now, we construct the  $C_k$-factors of $(K_3 \times K_y) \otimes \bar{K}_k$   as follows:
	\begin{eqnarray}
	(K_3 \times K_y) \otimes \bar{K}_k&\cong& (C_3^1 \oplus C_3^2 \oplus...\oplus C_3^{y-1}) \otimes \bar{K}_k, \ \mbox{by Theorem \ref{l1}} \nonumber\\
	&\cong& (C_3^1 \otimes \bar{K}_k )\oplus...\oplus (C_3^{y-1} \otimes \bar{K}_k), \nonumber
	\end{eqnarray}
	each $C_3^i, \ 1\leq i \leq y-1, $ is a $C_3$-factor of $K_3 \times K_y$. One can observe that $C_3^i \otimes \bar{K}_k \cong y(C_3 \otimes \bar{K}_k) \cong y (K_{k,k,k})$. By Theorem \ref{ttt}, each $K_{k,k,k}$ is factorable into $k \ C_k$-factors. Thus we get $k(y-1)$ $C_k$-factors of $(K_3 \times K_y) \otimes \bar{K}_k$. By (\ref{eq2}), each $K_3 \times K_k$ is factorable into $(k-1) \ C_k$-factors. 
	Finally, the two graph in   (\ref{eq1}), together gives $(ky-1) \ C_k$-factors of $K_3 \times K_{ky}$. Hence, $C_k$-factorization of  $K_3 \times K_{ky}$ exists.
\end{proof}
\begin{thm} \label{2=6}
	There exists a $C_k$-factorization of $K_3 \times K_{ky}$ for all even integers $k > 6$ and $y \geq 2$. 
\end{thm}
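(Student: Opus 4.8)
The plan is to build on the ``$y$ holes'' decomposition already used in the proof of Theorem~\ref{nr}, namely the isomorphism~(\ref{eq1}),
\[
K_3 \times K_{ky} \;\cong\; \bigl((K_3 \times K_y) \otimes \bar{K}_k\bigr)\;\oplus\; y\,(K_3 \times K_k),
\]
which is valid for every $y$, since the argument that produces it (making $y$ holes of type $K_3\times K_k$ and contracting each $k$-subset) never uses the parity of $y$. Because Theorem~\ref{nr} already covers $y$ odd for all even $k\ge 6$, it suffices to treat $y$ even. The right-hand summand is disposed of exactly as in case~(i) of Theorem~\ref{nr}: by~(\ref{eq2}) each copy of $K_3\times K_k$ decomposes into $k-1$ $C_k$-factors (the exception $(k,m)=(4,3)$ of Theorem~\ref{f} does not arise since $k>6$), so $y\,(K_3\times K_k)$ contributes $k-1$ $C_k$-factors of $K_3\times K_{ky}$.

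The real work is to exhibit a $C_k$-factorization of $(K_3\times K_y)\otimes\bar{K}_k$ with $y$ even. First I would write $K_3\times K_y=C_3\times K_y$ and apply Theorem~\ref{hc} (legitimate since $y$ is even) to split it into $y-1$ Hamilton cycles $C_{3y}^{1},\dots,C_{3y}^{y-1}$; distributing the lexicographic product over this edge-disjoint union gives $(K_3\times K_y)\otimes\bar{K}_k\cong\bigoplus_{i=1}^{y-1}\bigl(C_{3y}^{i}\otimes\bar{K}_k\bigr)$, so it is enough to $C_k$-factorize one blow-up $C_{3y}\otimes\bar{K}_k$. Here I would exploit that $3y$ is even: a proper $2$-edge-colouring of the cycle $C_{3y}$ splits its edge set into two perfect matchings, and each colour class of $C_{3y}\otimes\bar{K}_k$ is then a vertex-disjoint union of $\tfrac{3y}{2}$ copies of $K_{k,k}$ that together span all $3y$ parts. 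By Theorem~\ref{mn} each $K_{k,k}$ admits a $C_k$-factorization into $\tfrac{k}{2}$ factors --- and this is exactly where the hypothesis $k>6$, rather than $k\ge 6$, is forced, since the unique exception in Theorem~\ref{mn} is $K_{6,6}$ with $6$-cycles. Taking one $C_k$-factor from each of the $\tfrac{3y}{2}$ copies in a given colour class produces a $C_k$-factor of that class, so $C_{3y}\otimes\bar{K}_k$ receives $k$ $C_k$-factors and $(K_3\times K_y)\otimes\bar{K}_k$ receives $k(y-1)$ of them.

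Putting the pieces together, the two families supply $k(y-1)+(k-1)=ky-1$ pairwise edge-disjoint $C_k$-factors, which is precisely the number needed to factorize the $2(ky-1)$-regular graph $K_3\times K_{ky}$; hence they constitute a $C_k$-factorization and the theorem follows. The step I expect to be the main obstacle is the subcase $y\equiv 2\pmod 4$: the short route that works for odd $y$ --- decompose $K_3\times K_y$ into $C_3$-factors via Theorem~\ref{l1}, blow each triangle factor up into copies of $K_{k,k,k}$, and finish with Theorem~\ref{ttt} --- is unavailable, because Theorem~\ref{l1} forbids $m\equiv 2\pmod 4$ when the cycle length is odd. Passing instead to a \emph{Hamilton} decomposition of $C_3\times K_y$ and then ``folding'' each resulting $C_{3y}\otimes\bar{K}_k$ into $K_{k,k}$-blocks is the device that gets around the obstruction, at the cost of having to exclude $k=6$. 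A secondary thing to watch is the bookkeeping: one must check that the $C_k$-factor counts of the several pieces add up correctly, so that together they use every edge exactly once.
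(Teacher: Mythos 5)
Your proposal is correct and follows essentially the same route as the paper: the same $y$-hole decomposition, a Hamilton decomposition of $K_3\times K_y$ via Theorem~\ref{hc}, splitting each Hamilton cycle $C_{3y}$ into two perfect matchings whose blow-ups are disjoint unions of $K_{k,k}$'s handled by Theorem~\ref{mn} (whence $k>6$), and the $k-1$ factors of each hole from~(\ref{eq2}), totalling $ky-1$ factors. Your explicit delegation of the odd-$y$ case to Theorem~\ref{nr} is a small point of added care, but the argument is the paper's own.
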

\begin{proof}
	We can write
	\begin{eqnarray}\label{eq4}
\hspace{-.8cm}	K_3 \times K_{ky} &\cong& \{(K_3 \times K_y) \otimes \bar{K}_k\} \oplus y (K_3 \times K_k) \ \mbox{(by case (ii) of  Theorem \ref{nr})}\\
	&\cong&\{(C_ {3y}^1 \oplus...\oplus C_{3y}^{y-1}) \otimes \bar{K}_k\} \oplus y (K_3 \times K_k), \ \mbox{by Theorem \ref{hc}} \nonumber
	\end{eqnarray}
	Since each $C_{3y}^i \cong (F_1\oplus F_2), \ 1 \leq i \leq y-1$, where $F_j, \ j=1,2$ is a $1$-factor of $C_{3y}$. We know that $F_j \otimes \bar{K}_k \cong \frac{3y}{2}(K_2 \otimes \bar{K}_k) \cong \frac{3y}{2}(K_{k,k})$ and by Theorem \ref{mn}, each $K_{k,k}$ has $\frac{k}{2} \ C_k$-factors . Hence $(K_3 \times K_y) \otimes \bar{K}_k$ is factorable into $(y-1)k \ C_k$-factors. By (\ref{eq2}), $ K_3 \times K_k$ is factorable into $(k-1) \ C_k$-factors. Finally, the two graphs in  (\ref{eq4}), together gives $(ky-1) \ C_k$-factors of $K_3 \times K_{ky}$. Hence, $C_k$-factorization of  $K_3 \times K_{ky}$ exists.
\end{proof}
\begin{thm} \label{3.7}
	There exists a $C_{kt}$-factorization of $C_k \times K_s,$ for all even $k \geq 4, \ t \geq 3$ and $s \equiv 0(mod \ 2t).$
\end{thm}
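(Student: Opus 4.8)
The plan is to obtain the statement as a one-line corollary of Theorem~\ref{l1}. First I would note that the cycle $C_k$ trivially admits a $C_k$-factorization: the single spanning $2$-factor $C_k$ is itself a $C_k$-factor, so $C_k \rVert C_k$. Next, the hypothesis $s \equiv 0 \pmod{2t}$ gives in particular $t \mid s$. Finally, since $k$ is even, the exceptional clause ``$m \not\equiv 2 \pmod 4$ when $k$ is odd'' in Theorem~\ref{l1} is vacuous here. Applying Theorem~\ref{l1} with $G = C_k$, $n = t$, and $m = s$ then yields $C_{kt} \rVert C_k \times K_s$, which is exactly the assertion.

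It is worth observing that this route only uses $t \mid s$, so the divisibility $s \equiv 0 \pmod{2t}$ is somewhat more than is strictly needed; the stated form is presumably the one convenient for the later applications. If one wanted a self-contained argument in the spirit of Lemma~\ref{lem} instead, I would write $s = 2tw$, make $w$ holes of size $2t$ to get $K_s \cong (K_w \otimes \bar{K}_{2t}) \oplus w\,K_{2t}$ (this is where $2t \mid s$ would enter), tensor with $C_k$, and use the standard identity $C_k \times (K_w \otimes \bar{K}_{2t}) \cong (C_k \times K_w) \otimes \bar{K}_{2t}$ to reduce to pieces of the form $C_k \times K_{2t}$ together with complete bipartite ``layers'' $C_k \times K_{t,t}$, whose $C_{kt}$-factors would be built by selecting, between consecutive copies of $\mathbb{Z}_k$, $1$-factors $F_r$ of $K_{t,t}$ exactly as in $(\ref{G})$ and $(\ref{H})$ of Lemma~\ref{lem}.

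The genuinely delicate point in that second approach is controlling the component lengths: a $2$-factor obtained by choosing one $1$-factor $F_{r_i}$ between consecutive layers splits into cycles whose common length is governed by the total distance $\sum_i r_i$ accumulated around $C_k$, and one must choose these distances — simultaneously for all the factors, so that each $F_r$ is used exactly once in each layer — so that every component has length precisely $kt$ and not some proper multiple or divisor of it. This is exactly the bookkeeping that Theorem~\ref{l1} packages away, so I would simply present the short proof above and verify its three hypotheses; there is no real obstacle beyond confirming that $C_k$ itself is a legitimate input to Theorem~\ref{l1}.
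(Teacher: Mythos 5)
Your deduction is only safe in the case the paper also gets for free, namely $t$ odd (the paper's Case (i) is exactly your one-liner: apply Theorem \ref{l1} with $n=t$ odd, $m=2ty$). The gap is the case $t$ even. Theorem \ref{l1} as transcribed in this paper cannot be taken at face value when both the cycle length of $G$ and $n$ are even: with $G=C_k$ ($k$ even) and $n=m=2$ it would assert $C_{2k}\rVert C_k\times K_2$, yet $C_k\times K_2\cong 2C_k$ (the paper itself relies on this in Theorem \ref{tt1}, where $C_k\times K_2$ is used as a $C_k$-factor), so no cycle of length $2k$ exists at all. Hence the displayed wording is an over-broad paraphrase of the result of \cite{PP2}, and the vacuity of the clause ``$m\not\equiv 2\pmod 4$ when $k$ is odd'' does not license the pair ($k$ even, $t$ even). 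Consistently with this, every invocation of Theorem \ref{l1} in the paper has $n$ odd (or $n=1$) when the base cycles have even length, or $n$ even only when the base cycles have odd length (Theorem \ref{pt3}); and in the very statement you are proving, the authors --- one of whom is an author of \cite{PP2} --- handle even $t$ by an explicit construction rather than by citing Theorem \ref{l1}. Your side remark that only $t\mid s$ is needed should itself have been a warning sign: if that were available, the hypothesis $s\equiv 0\pmod{2t}$ and the whole second half of the paper's proof would be pointless.

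What the paper actually does for even $t$ is the part your proposal omits: write $C_k\times K_{2ty}\cong\{(C_k\times K_{2y})\otimes\bar{K}_t\}\oplus 2y\,(C_k\times K_t)$ (this is where $2t\mid s$ enters), exhibit $2y-1$ explicit $C_k$-factors $G_l$ of $C_k\times K_{2y}$ via distance $1$-factors, blow each up by $\bar{K}_t$ and apply Theorem \ref{hc2} to each resulting $C_k\otimes\bar{K}_t$ to get $(2y-1)t$ $C_{kt}$-factors, and use Theorem \ref{pten} on the $2y$ holes $C_k\times K_t$ (taking one Hamilton cycle from each hole simultaneously) to get the remaining $t-1$ $C_{kt}$-factors, for a total of $2ty-1$. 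Your fallback sketch points in this direction, but you explicitly decline to carry out the component-length bookkeeping and defer it back to Theorem \ref{l1}; since that is precisely the step Theorem \ref{l1} does not cover here, the proposal as written does not prove the even-$t$ case.
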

\begin{proof}
	We can write
	\begin{eqnarray}
	C_k \times K_s & \cong & C_k \times K_{2ty} \nonumber\\
	& \cong & C_k \times K_{t(2y)} \nonumber
	\end{eqnarray}
	{\bf Case (i): $t \geq 3$ is odd }\\
	By Theorem \ref{l1}, $C_k \times K_{t(2y)}$ has $2ty-1$ $C_{kt}$-factors.\\
	{\bf Case (ii): $t \geq 4$ is even }\\
	We write
	\begin{eqnarray}
	C_k \times K_{t(2y)} & \cong & \{(C_k \times K_{2y}) \otimes \bar{K}_t\} \oplus 2y(C_k \times K_t) \label{3.7eq}
	\end{eqnarray}
	Graphs in the R.H.S of (\ref{3.7eq}) can be obtained by making $2y$ holes of type $C_k \times K_t$ in $C_k \times K_{t(2y)}$ and identifying each $t$-subsets of $K_{t(2y)}$ (in the resulting graph)  into a single vertex and two of them are adjacent if the corresponding $t$- subsets in $C_k \times K_{t(2y)}$ form a $K_{t,t}.$ By expanding the vertices into $t$-subsets we get the first graph in  (\ref{3.7eq}).\\
	Now, we construct the $C_{k}$-factors of $C_k \times K_{2y}$ as follows:\\
	Let $V(C_k)=\{a_0,a_1,...,a_{k-1}\}$ then $V(C_k \times K_{2y})=\{A_i| 0 \leq i \leq k-1\},$ where $A_i=a_i \times \mathbb{Z}_{2y}=\{a_i^0,a_i^1,...,a_i^{2y-1}\}.$
	\begin{itemize}
		\item[] ${G}_l=\oplus_{j=0}^{\frac{k}{2}-1}\{F_l(A_{2j},A_{2j+1})\} \oplus \{\oplus _{j=0}^{\frac{k}{2}-1}\{F_{2y-l}(A_{2j+1},A_{2j+2})\}\},$
	\end{itemize}
	where the subscripts of $A$ are taken modulo $k.$ One can check that each $G_l$, $1 \leq l \leq 2y-1,$ is a $C_k$-factor of $C_k \times K_{2y}.$ We know that $(C_k \times K_{2y}) \otimes \bar{K}_t \cong (2y-1)\{2y(C_k \otimes \bar{K}_t)\}$ and by Theorem \ref{hc2}, each $C_k \otimes \bar{K}_t$ has $t$ $C_{kt}$-factors. Thus we have obtained $(2y-1)t$ $ C_{kt}$-factors of $(C_k \times K_{2y}) \otimes \bar{K}_t.$ Further, by Theorem \ref{pten}, each $C_k \times K_t$ has $t-1$ $C_{kt}$-factors. The combination of the $2ty-1$ $C_{kt}$-factors of (\ref{3.7eq}) gives a $C_{kt}$-factors of $C_k \times K_{s}$. Hence $C_{kt}$-factors of  $C_k \times K_{s}$ exists. 
\end{proof}
\section{$k$-ARCS of $(K_u \times K_g)(\lambda)$}
In this section, we investigate the existence of  $k$-ARCS of tensor product of complete graphs with edge multiplicity $\lambda.$

\begin{thm}\label{tt2}
	For all even $k \geq 4$, $u \equiv 1 (mod \ k)$ and $g \geq 2$, there exists a $k$-ARCS of $(K_u \times {K}_{g})(2)$.
	\begin{proof}
		Let $u=kx+1$, where $x \geq 1$.\\ By Theorem \ref{t4}, let $\mathcal{C}=\{\mathcal{C}_k^1,\mathcal{C}_k^2, \mathcal{C}_k^3,..., \mathcal{C}_k^u \}$ be the near $C_k$-factorization of $K_u(2)$, where each
		$\mathcal{C}_k^i$, $1 \leq i \leq u$ is a near $C_k$-factor of $K_u(2)$.
		Now we write
		\begin{eqnarray}
		(K_u \times K_g)(2) &\cong& K_u(2) \times K_g \nonumber \\
		&\cong& (\mathcal{C}_k^1 \oplus \mathcal{C}_k^2 \oplus... \oplus \mathcal{C}_k^u) \times K_{g}, \ \ \mbox{by Theorem \ref{t4}} \nonumber \\
		&\cong& (\mathcal{C}_k^1 \times K_{g}) \oplus (\mathcal{C}_k^2 \times K_{g}) \oplus ... \oplus(\mathcal{C}_k^u \times K_{g}). \nonumber 
		\end{eqnarray}
		Since each $ \mathcal{C}_k^i \times K_{g}\cong \frac{u-1}{k}(C_k \times K_{g}) $,
		by Theorem \ref{l1}, each $C_k \times K_{g}$ has $g-1$ $C_k$-factors. The $C_k$-factors obtained above become the partial $C_k$-factors of  $(K_u \times K_g)(2).$ 
		Thus, in total we get $u(g-1)$ partial $C_k$-factors of  $(K_u \times K_g)(2).$ 
		Therefore $k$-ARCS of  $(K_u \times {K}_{g})(2)$ exists.
	\end{proof}
\end{thm}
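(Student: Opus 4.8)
The plan is to exploit the tensor‑product identity $(K_u \times K_g)(2) \cong K_u(2) \times K_g$ together with the near‑$C_k$‑factorization of $K_u(2)$ guaranteed by Theorem \ref{t4}, and then expand each near‑$C_k$‑factor by tensoring with $K_g$. Since $u \equiv 1 \pmod{k}$, Theorem \ref{t4} applies and yields a near $C_k$‑factorization $\mathcal{C}=\{\mathcal{C}_k^1,\dots,\mathcal{C}_k^u\}$ of $K_u(2)$, each $\mathcal{C}_k^i$ being a $2$‑regular spanning subgraph of $K_u(2)\setminus\{v_i\}$ whose components are $k$‑cycles; writing $u=kx+1$, each $\mathcal{C}_k^i$ consists of exactly $x$ disjoint $k$‑cycles. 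First I would record the distributivity of $\times$ over edge‑disjoint unions to get $(K_u\times K_g)(2)\cong\bigoplus_{i=1}^u(\mathcal{C}_k^i\times K_g)$, and then observe $\mathcal{C}_k^i\times K_g\cong x\,(C_k\times K_g)$, since tensoring a disjoint union of cycles by $K_g$ is the disjoint union of the corresponding pieces.

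The second step is to factor each copy of $C_k\times K_g$ into $g-1$ $C_k$‑factors. This is exactly the content of Theorem \ref{l1} with $G=C_k$ (note $C_k\rVert C_k$ trivially, as $C_k$ is itself a single $C_k$‑factor, so $n=1$, $m=g$): Theorem \ref{l1} gives $C_k\rVert C_k\times K_g$. (The side condition $m\not\equiv 2\pmod 4$ when $k$ is odd is vacuous here because $k$ is even.) Counting: $C_k\times K_g$ is $2(g-1)$‑regular on $kg$ vertices, so a $C_k$‑factorization has exactly $g-1$ factors. I would then reassemble: for each fixed $i$, the union over the $x$ copies of $C_k\times K_g$ of their respective $j$‑th $C_k$‑factors is a partial $C_k$‑factor of $(K_u\times K_g)(2)$ missing the partite set corresponding to $v_i$ — this is the crucial point that a $C_k$‑factor of $\mathcal{C}_k^i\times K_g$, which spans $(V(K_u)\setminus\{v_i\})\times \mathbb{Z}_g$, is precisely a partial $C_k$‑factor relative to the partite set $\{v_i\}\times\mathbb{Z}_g$. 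Doing this over all $i$ and all $j\in\{1,\dots,g-1\}$ produces $u(g-1)$ partial $C_k$‑factors whose edge sets partition $(K_u\times K_g)(2)$, which is by definition a $k$‑ARCS.

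I would also check the arithmetic against the necessary conditions of Theorem \ref{nsty}: with $\lambda=2$ condition (iii) is automatic, $u\equiv 1\pmod k$ forces $g(u-1)\equiv 0\pmod k$, and $u\geq k+1\geq 5$, $g\geq 2$ cover condition (i); and the count $u(g-1)$ should match $\lambda\frac{u(g-1)}{2}=u(g-1)$, as it does. The approach involves no genuine obstacle — every ingredient is an off‑the‑shelf theorem from the Preliminaries — so the only care needed is bookkeeping: verifying that "missing one partite set" is preserved under the tensor expansion, and that the $x$ separate $C_k$‑factorizations glue correctly across the $x$ cycles inside a single near‑factor. That bookkeeping step, though routine, is the one I would write out most carefully, since it is where an off‑by‑one in the number of factors or a mismatch in which partite set is omitted could slip in.
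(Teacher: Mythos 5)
Your proposal matches the paper's proof essentially step for step: both use Theorem \ref{t4} to obtain the near $C_k$-factorization of $K_u(2)$, distribute the tensor product over the near-factors, identify each $\mathcal{C}_k^i \times K_g$ with $\frac{u-1}{k}$ copies of $C_k \times K_g$, and apply Theorem \ref{l1} to split each copy into $g-1$ $C_k$-factors, yielding the $u(g-1)$ partial $C_k$-factors. Your additional remarks on the vertex set missed by each partial factor and the count check against Theorem \ref{nsty} are correct bookkeeping that the paper leaves implicit.
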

\begin{thm}\label{tt3}
	For all even $k \equiv 0(mod \ 4), \ x \in \mathbb{N} \setminus \{2\}$, $u=kx+1$ and  $g \geq 3$ is odd , there exists a $k$-ARCS of $K_u \times {K}_{g}.$
\end{thm}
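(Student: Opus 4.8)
\noindent\emph{Proof plan.} The case $x=1$ is exactly Theorem~\ref{I1}, so I assume $x\ge 3$. Write $V(K_u)=\{\infty\}\cup B_0\cup\dots\cup B_{x-1}$ with each $|B_l|=k$ (call the $B_l$ \emph{blocks}), so that
\[
K_u=\Big(\bigoplus_{l=0}^{x-1}K_{k+1}^{(l)}\Big)\oplus\Big(\bigoplus_{0\le l<l'\le x-1}K_{k,k}^{(l,l')}\Big),
\]
where $K_{k+1}^{(l)}$ is the copy of $K_{k+1}$ on $\{\infty\}\cup B_l$ (pairwise edge-disjoint, sharing only $\infty$) and $K_{k,k}^{(l,l')}$ is the bipartite graph between $B_l$ and $B_{l'}$; tensoring by $K_g$ distributes over this decomposition. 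I would use two building blocks. \emph{(i)} By Theorem~\ref{I1}, each $K_{k+1}^{(l)}\times K_g$ has a partial $C_k$-factorization, which gives $\tfrac{g-1}{2}$ partial $C_k$-factors missing the part $W:=\{\infty\}\times\mathbb Z_g$ (each lying inside $(K_k\text{ on }B_l)\times K_g$ and spanning $B_l\times\mathbb Z_g$) and $\tfrac{g-1}{2}$ missing each block-part $P\subseteq B_l\times\mathbb Z_g$ (spanning $W\cup((B_l\times\mathbb Z_g)\setminus P)$). \emph{(ii)} $K_{k,k}\times K_g$ has a $C_k$-factorization into $\tfrac{k}{2}(g-1)$ factors: $K_{k,k}$ decomposes into $\tfrac{k}{2}$ $C_k$-factors by Theorem~\ref{mn} (the exception $m=n=6$ being excluded since $4\mid k$), and each $C_k\times K_g$ into $g-1$ $C_k$-factors by Theorem~\ref{l1}; each resulting $C_k$-factor of $K_{k,k}^{(l,l')}\times K_g$ is a $2$-regular subgraph spanning $(B_l\cup B_{l'})\times\mathbb Z_g$ with all cycles of length $k$, built from between-block edges only. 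Replacing Theorem~\ref{mn} by Theorem~\ref{ttt}, the same argument gives a $C_k$-factorization of $K_{k,k,k}\times K_g$ into $k(g-1)$ factors, each spanning three blocks.

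\emph{Assembling the factors.} For the $\tfrac{g-1}{2}$ members of the $k$-ARCS missing $W$: take, colour by colour, the union over all blocks $l$ of one $W$-missing factor from (i); this is $2$-regular with $k$-cycles and spans everything but $W$. For a block-part $P\subseteq B_l\times\mathbb Z_g$, a member missing $P$ is built from a $P$-missing factor of (i) for $K_{k+1}^{(l)}\times K_g$ (covering $W$ and $(B_l\times\mathbb Z_g)\setminus P$) together with $C_k$-factors from (ii) exactly covering $\bigcup_{l'\ne l}B_{l'}\times\mathbb Z_g$: for $x$ odd, one $C_k$-factor of $K_{k,k}^{(l',l'')}\times K_g$ per pair $\{l',l''\}$ of a perfect matching of $\{0,\dots,x-1\}\setminus\{l\}$; for $x$ even, one $C_k$-factor of a $K_{k,k,k}\times K_g$ on a chosen triple, plus one $C_k$-factor of $K_{k,k}\times K_g$ per pair of a matching of the remaining $x-4$ indices. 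Each such union is a partial $C_k$-factor missing precisely $P$.

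\emph{Consistency.} It remains to choose the matchings (and triples) so that every $C_k$-factor from (i), (ii) is used exactly once and each part is missed exactly $\tfrac{g-1}{2}$ times. For $x$ odd this is automatic: by Theorem~\ref{t1} the ``index graph'' $K_x$ has a near-$1$-factorization; take $\tfrac{k}{2}(g-1)$ disjoint copies of one, and use the $\tfrac{k}{2}(g-1)=k\cdot\tfrac{g-1}{2}$ near-$1$-factors missing index $l$ as the matchings for the $k\cdot\tfrac{g-1}{2}$ members of the $k$-ARCS missing parts of $B_l$. Since each edge $\{l',l''\}$ of $K_x$ lies in exactly one near-$1$-factor per copy, it is called on exactly $\tfrac{k}{2}(g-1)$ times — which is precisely the number of $C_k$-factors of $K_{k,k}^{(l',l'')}\times K_g$ — and a routine count then shows every edge is used and every part missed $\tfrac{g-1}{2}$ times, so a $k$-ARCS results. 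For $x$ even I would run the same scheme with one triple per ``missing $P$'' member to absorb the parity of $x-1$: the numbers of such members, of triples, and of pairs all coincide with the supplies coming from (i) and (ii) (an elementary identity), and one distributes the triples and pairs across the $x$ indices so that each between-block $C_k$-factor is used once.

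\emph{Where the difficulty lies.} The genuinely technical point is the $x$ even bookkeeping: arranging the triples and pairs so that every $C_k$-factor arising from the between-block graphs is consumed exactly once — the $x$ odd version of this is handed to us for free by the near-$1$-factorization of $K_x$. Two features of the statement are visible in the construction: the hypothesis $k\equiv 0\pmod 4$ is inherited verbatim from Theorem~\ref{I1}, and $x=2$ must be excluded because a block of size $2k$ would force the case $u=2k+1$, which is exactly the instance left open.
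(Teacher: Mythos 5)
Your block structure, the use of Theorem~\ref{I1} inside each block, and the assembly of the $\infty$-missing and part-missing members essentially reproduce the paper's construction, and your odd-$x$ bookkeeping via a near-$1$-factorization of $K_x$ is correct: there the supply of $\tfrac{k}{2}(g-1)$ $C_k$-factors of each $K_{k,k}^{(l',l'')}\times K_g$ exactly meets the demand. The gap is the even-$x$ case, and it is not mere bookkeeping. Your scheme consumes each chosen $K_{k,k,k}^{(a,b,c)}\times K_g$ through Theorem~\ref{ttt}, which uses up all three bipartite graphs between $a,b,c$; hence the chosen triples must be pairwise edge-disjoint triangles of the index graph $K_x$, there must be $x/2$ of them (each supplies $k(g-1)$ triple-factors against a total demand of $x\cdot\tfrac{k}{2}(g-1)$), and every member missing a part of block $l$ must find a chosen triangle avoiding $l$ whose complementary $x-4$ indices admit a perfect matching by non-triangle edges. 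This already fails at the smallest admissible even values: $K_4$ contains no two edge-disjoint triangles (yet $x=4$, i.e.\ $u=4k+1$, is covered by the theorem), and for $x=6$ the only edge-disjoint system of three triangles (up to isomorphism, $\{0,1,2\},\{2,3,4\},\{4,5,0\}$) leaves blocks $1$, $3$, $5$ with leftover pairs that are triangle edges, whose pair-factors no longer exist. So the distribution you defer to ``an elementary identity'' does not exist in the form you describe.

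The paper removes the parity obstruction by refining the index graph rather than matching whole blocks: it splits each block of size $k=4t$ into two half-blocks of size $2t$, so the between-block graph becomes $(K_x\otimes\bar K_2)\otimes\bar K_{2t}$, and it invokes Theorem~\ref{k2} to obtain a partial (near) $1$-factorization of $K_x\otimes\bar K_2$, which exists for every $x\ge 3$ irrespective of parity. Blowing up the near-$1$-factors missing block $i$ by $\bar K_{2t}$ yields vertex-disjoint unions of $K_{2t,2t}$'s spanning all blocks except $i$; each $K_{2t,2t}$ is $C_{k}$-factorable by Theorem~\ref{mn} (the $m=n=k=6$ exception cannot occur since $4\mid k$), and crossing with $K_g$ via Theorem~\ref{l1} gives exactly the $\tfrac{k}{2}(g-1)$ block-$i$-avoiding $C_k$-factors needed to glue onto the part-missing factors from Theorem~\ref{I1} --- precisely your odd-$x$ pairing, now valid for all $x\ge 3$. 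Replacing your triangle/matching scheme by this half-block device repairs the argument; as written, the even-$x$ half of your proof is a genuine gap.
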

\begin{proof}
Let $u=kx+1$, $g \geq 3$ is odd and $k=4t$, where $x \in \mathbb{N} \setminus  \{2\},$ and $y,t \geq 1$.\\
We establish the proof in two cases.\\
{\bf Case (i): $x=1$}\\
Let $A_0, A_1,...,A_{4t-1},A_{\infty}$ be the parts of  $K_{4t+1} \times K_g.$
By Theorem \ref{I1}, $ K_{4t+1} \times K_g$ has a partial $C_k$-factorization $\mathcal{G}=\{G_{s,h} \oplus G_{h}^{\infty}| s\in \mathbb{Z}_{4t},   1 \leq h \leq \frac{g-1}{2}\}$ where
 each $G_{s,h}$ is a  partial $C_k$-factor missing the partite set $A_{s+2t},$  and the subscripts of $A$ are taken modulo $4t$; 
 each $G_{h}^{\infty}$ is a partial $C_k$- factor missing the partite set $A_{\infty}.$\\
Hence $k$-ARCS of  $K_{4t+1} \times {K}_{g}$ exists.\\
{\bf Case (ii): $x >2$}\\
Set $X_i=\{i\} \times \mathbb{Z}_{4t}$ and let $\Gamma_i$ be the complete graph of order $4t+1$ with vertex-set $V(\Gamma_i)= X_i \cup \{\infty\},$ for every $i \in \mathbb{Z}_x.$ Also, let $X_0,X_1,...,X_{x-1}$ be the parts of the complete multipartite graph $K_x \otimes \bar{K}_{4t},$ and set
\begin{eqnarray}
K_{4tx+1} \cong (K_x \otimes \bar{K}_{4t}) \oplus (\oplus_{i=0}^{x-1}\Gamma_i) \label{R1}
\end{eqnarray}
Considering that $K_x \otimes \bar{K}_{4t}\cong (K_x \otimes \bar{K}_{2}) \otimes \bar{K}_{2t},$ and by Theorem \ref{k2}, $K_x \otimes \bar{K}_{2}$ has a partial $1$-factorization $\{\Delta_0,\Delta_1,...,\Delta_{x-1}\}$ . We have that
\begin{eqnarray}
K_x \otimes \bar{K}_{4t} & \cong& (\Delta_0 \oplus ... \oplus \Delta_{x-1}) \otimes \bar{K}_{2t} \nonumber\\
&\cong& (\Delta_0 \otimes \bar{K}_{2t}) \oplus...\oplus (\Delta_{x-1} \otimes \bar{K}_{2t}), \nonumber
\end{eqnarray} 
where each $\Gamma'_i=\Delta_i \otimes \bar{K}_{2t}$ misses $X_i.$ Therefore, by (\ref{R1}) it follows that
\begin{eqnarray}
K_{4tx+1} \cong \oplus_{i=0}^{x-1}(\Gamma'_i \oplus \Gamma_i) \nonumber
\end{eqnarray}
where $V(\Gamma'_i \oplus \Gamma_i)=(\mathbb{Z}_x \times \mathbb{Z}_{4t}) \cup \{\infty\}.$ Also,
\begin{eqnarray}
K_{4tx+1} \times K_g &\cong& \oplus_{i=0}^{x-1}(\Gamma'_i \oplus \Gamma_i) \times K_g \nonumber\\
&\cong& \oplus_{i=0}^{x-1}\{(\Gamma'_i \times K_g) \oplus (\Gamma_i \times K_g)\} . \label{R2}
\end{eqnarray}
\par By Theorem \ref{mn}, $\Gamma'_i$ has a $C_{4t}$-factorization $\mathcal{G}'_i=\{G'_{i,1},G'_{i,2},...,G'_{i,2t}\},$ and by Theorem \ref{l1}, $G'_{i,j} \times K_g$ has a $C_k$-factorization $\mathcal{G}'_{i,j}=\{G'_{i,j,1},G'_{i,j,2},...,G'_{i,j,g-1}\},$ for every $i \in \mathbb{Z}_x,$ and $ j \in \{1,2,...,2t\}.$ Hence, $\oplus_j \mathcal{G}'_{i,j}$ gives a $C_k$-factorization of $\Gamma'_i \times K_g.$
\par Now consider \ $\Gamma_i \times K_g \cong K_{4t+1} \times K_g $ and let $A_0, A_1,...,A_{4t-1},A_{\infty}$ be the parts of  $K_{4t+1} \times K_g.$
By Theorem \ref{I1}, $ K_{4t+1} \times K_g$ has a partial $C_k$-factorization 
$\mathcal{G}_i=\{G_{i,s,h} \oplus G_{i,h}^{\infty}| s\in \mathbb{Z}_{4t},   1 \leq h \leq \frac{g-1}{2}\}$ where 
\ each $G_{i,s,h}$ is a partial $C_k$-factor missing the partite set $A_{s+2t},$  and the subscripts of $A$ are taken modulo $4t$; 
each $G_{i,h}^{\infty}$ is a partial $C_k$-factor missing the partite set $A_{\infty}.$\\
One can check that $\mathcal{G}=\{G'_{i,j,l} | i \in \mathbb{Z}_x, 1 \leq j \leq 2t, 1 \leq l \leq g-1\} \cup \{G_{i,s,h} \oplus G_{i,h}^{\infty}| i \in \mathbb{Z}_x, s\in \mathbb{Z}_{4t},   1 \leq h \leq \frac{g-1}{2}\}$
 gives a partial $C_k$-factorization of $K_{4tx+1} \times K_g.$ 
Therefore $k$-ARCS of  $K_u \times {K}_{g}$ exists.
\end{proof}
\begin{thm}\label{tt1}
	For all even $k \geq 4$, odd $u \geq 3$	and $g \equiv 0 (mod \ k)$, there exists a $k$-ARCS of $(K_u \times {{K}_g})(2)$.
	\begin{proof}
		Let $u=2x+1$ and $g=ky$, where $x,y \geq 1$.\\
		By Theorem \ref{t1}, let $\mathcal{F}=\{F_1,F_2,....,F_u\}$ be the near $1$-factorization of $K_u$, where each $F_i$, $1 \leq i \leq u$ is a near $1$-factor of $K_u$.
		We can write

		\begin{eqnarray}
		(K_u \times K_g)(2) &\cong& \{(F_1 \oplus F_2 \oplus .... \oplus F_u) \times K_g\} (2), \ \ \mbox{by  Theorem  \ref{t1}} \nonumber \\
		& \cong & (F_1 \times K_g) (2) \oplus .... \oplus (F_u \times K_g)(2) \nonumber \\
		& \cong & (F_1 \times K_{ky}) (2) \oplus .... \oplus (F_u \times K_{ky})(2) \nonumber
		\end{eqnarray}
		Since $(F_i \times K_{ky})(2) \cong F_i \times K_{ky}(2)  \cong  \frac{u-1}{2} \ (K_2 \times K_{ky}(2))$ and by Theorem \ref{t6}, let $\mathcal{C}=\{\mathcal{C}_k^1,\mathcal{C}_k^2,...,\mathcal{C}_k^{ky-1}\}$ be the $C_k$-factorization of $K_{ky}(2),$  where each $C_k^j, \ 1 \leq j \leq ky-1$ is a $C_k$-factor of $K_{ky}(2)$. One can observe that $\mathcal{C}_k^j \cong y C_k$ and  $K_2 \times C_k \cong C_k \times K_2.$  By Theorem \ref{l1}, each $C_k \times K_2$ is a $C_k$-factor. Thus, in total we get $(2x+1)(ky-1)$ partial $C_k$-factors of $(K_u \times K_g)(2)$. Therefore $k$-ARCS of  $(K_u \times {K}_{g})(2)$ exists.
	\end{proof}
\end{thm}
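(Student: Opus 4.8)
The final statement to prove is Theorem~\ref{tt1}: for all even $k \geq 4$, odd $u \geq 3$, and $g \equiv 0 \pmod{k}$, there exists a $k$-ARCS of $(K_u \times K_g)(2)$.

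\medskip

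The plan is to exploit the distributivity of the tensor product over edge-disjoint unions together with the near $1$-factorization of $K_u$ (available by Theorem~\ref{t1} since $u$ is odd). First I would write $u = 2x+1$ and $g = ky$ for integers $x, y \geq 1$, and fix a near $1$-factorization $\mathcal{F} = \{F_1, \dots, F_u\}$ of $K_u$; each $F_i$ misses exactly one vertex of $K_u$. Applying the tensor product with $K_g$ and passing the edge-multiplicity $2$ through, I get
\begin{eqnarray}
(K_u \times K_g)(2) &\cong& \bigoplus_{i=1}^{u} (F_i \times K_g)(2) \nonumber\\
&\cong& \bigoplus_{i=1}^{u} \bigl(F_i \times K_{ky}(2)\bigr). \nonumber
\end{eqnarray}
Because $F_i$ is a perfect matching on $u-1 = 2x$ vertices, $F_i \times K_{ky}(2)$ is a disjoint union of $x$ copies of $K_2 \times K_{ky}(2)$. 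Moreover each component $F_i \times K_{ky}(2)$ of the decomposition is naturally a subgraph of $(K_u \times K_g)(2)$ that misses precisely the part of $K_u \times K_g$ corresponding to the vertex missed by $F_i$; so a $C_k$-factor of $F_i \times K_{ky}(2)$ is exactly a partial $C_k$-factor of $(K_u \times K_g)(2)$ associated to that missed part.

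\medskip

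The second step is to factor $K_2 \times K_{ky}(2) \cong K_2 \times (K_{ky}(2))$ into $C_k$-factors. Since $ky \equiv 0 \pmod{k}$ and the edge-multiplicity $2$ is even, Theorem~\ref{t6} gives a $C_k$-factorization $\{\mathcal{C}_k^1, \dots, \mathcal{C}_k^{ky-1}\}$ of $K_{ky}(2)$. Each $C_k$-factor $\mathcal{C}_k^j$ is a disjoint union of $y$ cycles $C_k$ (so $\mathcal{C}_k^j \cong yC_k$), and tensoring with $K_2$ distributes over these cycles: $K_2 \times \mathcal{C}_k^j \cong \bigoplus y (C_k \times K_2)$. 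By Theorem~\ref{l1} (with $k$ replaced by $k$, $n=1$, $m=2$ — here the restriction $m \not\equiv 2 \pmod 4$ for odd $k$ is irrelevant because our $k$ is even, or more directly since $C_k \rVert C_k$ trivially), $C_k \times K_2$ is itself a single $C_k$-factor (it is a $2$-regular graph on $2k$ vertices decomposable — in fact equal — to cycles of length $k$; when $k$ is even, $C_k \times K_2 \cong 2C_k$, and when $k$ is odd it is $C_{2k}$, but since $k$ is even here we are fine). Hence $K_2 \times K_{ky}(2)$ has $ky-1$ $C_k$-factors, so $F_i \times K_{ky}(2)$, being $x$ disjoint copies of this graph, also has $ky-1$ $C_k$-factors.

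\medskip

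Finally I would assemble the count. Over all $i \in \{1, \dots, u\}$ we obtain $u(ky-1) = (2x+1)(ky-1)$ partial $C_k$-factors, each missing the part of $(K_u \times K_g)(2)$ that corresponds to the vertex missed by $F_i$; grouping them by $i$ shows that exactly $ky-1 = g-1$ of them miss each fixed part, which matches the count $\lambda(g-1)/2 = (g-1)$ from Theorem~\ref{nsty} with $\lambda = 2$. These partial $C_k$-factors are pairwise edge-disjoint (they live in edge-disjoint pieces $F_i \times K_{ky}(2)$, and within a fixed piece they come from a genuine factorization), so together they form a $k$-ARCS of $(K_u \times K_g)(2)$. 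I expect the only mildly delicate point to be the bookkeeping that a $C_k$-factor of the component $F_i \times K_{ky}(2)$ really is a \emph{partial} $C_k$-factor of the whole graph in the sense of the definition (i.e.\ it spans all parts except one) — but this is immediate once one observes that $F_i$ spans $V(K_u) \setminus \{v_i\}$ for a unique $v_i$. No genuine obstacle arises; the theorem follows by a clean chain of known factorization results.
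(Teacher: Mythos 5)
Your proposal is correct and follows essentially the same route as the paper: near $1$-factorization of $K_u$, distribution of the tensor product, the $C_k$-factorization of $K_{ky}(2)$ from Theorem~\ref{t6}, and the observation that $C_k \times K_2$ yields $C_k$-factors, giving $(2x+1)(ky-1)$ partial $C_k$-factors. Your added remarks (that $C_k \times K_2 \cong 2C_k$ for even $k$, and the bookkeeping of which part each partial factor misses) only make explicit what the paper leaves to the reader.
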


\begin{thm} \label{tt4}
	For all even  $k \geq 6$, $ y \geq  1$ is odd,  $x \in \mathbb{N} \setminus \{2\}, \ u=4x $ and $g=ky,$	
	there exists a $k$-ARCS of $(K_u \times {K}_g)(2).$
\end{thm}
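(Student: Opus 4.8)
The plan is to remove the edge-multiplicity and reduce, by means of a $C_k$-factorization of $K_{ky}(2)$, to a single partial $C_k$-factorization of $K_{4x}\times C_k$. Write $(K_u\times K_g)(2)\cong K_u\times K_g(2)=K_{4x}\times K_{ky}(2)$. Since $\lambda=2$ is even and $k\mid ky$, Theorem \ref{t6} gives a $C_k$-factorization $K_{ky}(2)=\bigoplus_{j=1}^{ky-1}D_j$, and every $C_k$-factor $D_j$ of $K_{ky}(2)$ is a vertex-disjoint union of $y$ cycles of length $k$, so $D_j\cong yC_k$. Using that the tensor product distributes over edge-disjoint unions and that each $D_j$ is a disjoint union of cycles,
\[
K_{4x}\times K_{ky}(2)=\bigoplus_{j=1}^{ky-1}\bigl(K_{4x}\times D_j\bigr),\qquad K_{4x}\times D_j\cong y\,(K_{4x}\times C_k),
\]
the right-hand object being a vertex-disjoint union of $y$ copies of $K_{4x}\times C_k$. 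Hence it is enough to produce a \emph{partial $C_k$-factorization of $K_{4x}\times C_k$}: a decomposition into $4x$ near-$2$-factors $N_1,\dots,N_{4x}$, all of whose cycles have length $k$, where $N_v$ misses the $v$-th vertex-class of $K_{4x}$. Indeed, for each class $v$ and each $j$, the union of the copies of $N_v$ coming from the $y$ copies of $K_{4x}\times C_k$ inside $K_{4x}\times D_j$ is a partial $C_k$-factor of $(K_u\times K_g)(2)$ missing the partite set indexed by $v$; carrying this out for all $v$ and all $j$ yields $4x(ky-1)$ partial $C_k$-factors, exactly the number prescribed by Theorem \ref{nsty}.

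To build the partial $C_k$-factorization of $K_{4x}\times C_k$, label $V(K_{4x})=\{1,\dots,4x-1,\infty\}$ and split $K_{4x}=K_{4x-1}\oplus S_\infty$, where $K_{4x-1}$ is the complete graph on $\{1,\dots,4x-1\}$ and $S_\infty$ is the star centred at $\infty$; then $K_{4x}\times C_k=(K_{4x-1}\times C_k)\oplus(S_\infty\times C_k)$. As $4x-1$ is odd, Theorem \ref{t1} provides a near-$1$-factorization $\{F_1,\dots,F_{4x-1}\}$ of $K_{4x-1}$ with $F_i$ missing vertex $i$, so $F_i\cong(2x-1)K_2$; and because $k$ is even, $K_2\times C_k\cong 2C_k$, so each $F_i\times C_k$ is a near-$2$-factor of $K_{4x-1}\times C_k$ that misses class $i$ and consists only of $k$-cycles. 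The remaining task is to absorb the star part $S_\infty\times C_k=\bigoplus_{i=1}^{4x-1}H_i$, where $H_i\cong K_2\times C_k$ is the subgraph carried by classes $\infty$ and $i$ and decomposes into two $k$-cycles, each using $k/2$ vertices of the $\infty$-class and $k/2$ of class $i$, so that the end result is still a family of $4x$ near-$2$-factors, one missing each class. The idea is to perturb the pieces $F_i\times C_k$: in each leaf class one frees a controlled number of edge-slots, splices the freed portions together with the $k$-cycles of the relevant $H_j$ $(j\neq i)$ into new $k$-cycles through the $\infty$-class, and arranges matters so that for each $i$ the $\infty$-class is covered using only the $H_j$ with $j\neq i$, while the $H_j$-cycles that remain unused are gathered into the factor $N_\infty$ that misses the $\infty$-class.

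The step I expect to be hardest is exactly this absorption: the edges at $\infty$ only reach the leaf classes, which are saturated by the pieces $F_i\times C_k$, so a plain disjoint union is impossible; the near-$1$-factorization-based pieces must be modified to leave precisely the right room in each of the $4x-1$ leaf classes, uniformly, and so that every new component still has length $k$. I anticipate this is where the hypothesis $x\neq 2$ (equivalently $u\neq 8$) enters, matching the appearance of $u=8$ among the possible exceptions of Theorem \ref{mt}: with only seven leaf classes there is too little slack to redistribute the $\infty$-class cycles without forcing a cycle of length $2k$. Finally, the base case $x=1$ ($u=4$) I would settle by a direct construction of a partial $C_k$-factorization of $K_4\times C_k$ into four near-$2$-factors, one missing each class, with the smallest instances (in particular $k=6$) verified by an explicit list.
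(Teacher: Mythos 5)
Your opening reduction is sound: moving the multiplicity onto the second factor, applying Theorem \ref{t6} to $K_{ky}(2)$, and noting $K_{4x}\times D_j\cong y\,(K_{4x}\times C_k)$ correctly reduces the theorem, with the right count $4x(ky-1)$ of partial factors, to one missing ingredient: a decomposition of $K_{4x}\times C_k$ into $4x$ near-$2$-factors consisting entirely of $k$-cycles, one missing each of the $4x$ classes. But that ingredient is exactly where all the difficulty sits, and you never construct it. There is no vertex-level source for it: $K_{4x}$ with multiplicity $1$ has nothing analogous to the near $C_3$-factorization of $K_4(2)$, so the structure must be built inside the product, and your plan --- take a near-$1$-factorization of $K_{4x-1}$, then ``splice'' the star part $S_\infty\times C_k$ into the pieces $F_i\times C_k$ by freeing edge-slots in the leaf classes --- is described only qualitatively. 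No splicing rule is given, there is no verification that every rearranged component still has length exactly $k$, the factor $N_\infty$ is never exhibited, and the base case $x=1$ (a decomposition of $K_4\times C_k$, in particular $k=6$) is deferred to an ``explicit list'' that is not supplied. As written, the proposal restates the problem rather than solving it; note also that if your lemma held for every $x$ it would even eliminate the $u=8$ exception of Theorem \ref{mt}, which should make you suspect it is harder than the sketch suggests.

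Your speculation about where $x\neq 2$ enters is symptomatic of the gap: in your reduction $x$ plays no visible role, so nothing in your outline accounts for the exclusion, whereas in the paper it comes from Theorem \ref{k2} (a partial $1$-factorization of $K_x\otimes \bar{K}_4$ requires $x\geq 3$), with $x=1$ treated separately. The paper avoids your missing lemma altogether: it keeps the multiplicity $2$ on the small side, writes $K_4(2)$ as four near $C_3$-factors (Theorem \ref{t5}), and then applies the purpose-built $C_k$-factorization of $K_3\times K_{ky}$ for even $k\geq 6$ and odd $y$ (Theorem \ref{nr}); for $x>2$ it assembles these via $K_{4x}\cong (K_x\otimes\bar{K}_4)\oplus xK_4$. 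That is where the hypotheses $k\geq 6$ and $y$ odd are actually consumed --- hypotheses your argument never uses, another signal that the hard content is missing. Until you exhibit the partial $C_k$-factorization of $K_{4x}\times C_k$ (or at least of $K_4\times C_k$, together with a frame-type assembly for general $x$), the proof is incomplete.
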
 
\begin{proof}
Let $ u =4x$ and  $g=ky$, where  $x \in \mathbb{N} \setminus \{ 2\}, \ y \geq 1$ is odd.\\
We establish the proof in two cases.\\
{\bf Case (i): $x=1.$}\\
We can write $$(K_4 \times K_{ky})(2) \cong K_4(2) \times K_{ky}$$
  By Theorem \ref{t5}, $ K_4(2)$ has a near $C_3$-factorization $\mathcal{G}=\{G_{1},G_{2},G_{3},G_{4}\}$, and by Theorem \ref{nr}, each $G_{i} \times K_{ky}, \ 1 \leq i \leq 4$   has a  $C_k$-factorization $\mathcal{G}_{i}=\{G_{i,1},G_{i,2},...,G_{i,ky-1}\}.$  Hence $ (K_4\times K_{ky})(2)$ has a partial $C_k$-factorization. \\
{\bf Case (ii): $x >2.$}\\
Set $X_i=\{i\} \times \mathbb{Z}_{4}$ and let $\Gamma_i$ be the complete graph of order $4$ with vertex-set $V(\Gamma_i)= X_i ,$ for every $i \in \mathbb{Z}_x.$ Also, let $X_0,X_1,...,X_{x-1}$ be the parts of the complete multipartite graph $K_x \otimes \bar{K}_{4},$ and set
\begin{eqnarray}
K_{4x} \cong (K_x \otimes \bar{K}_{4}) \oplus (\oplus_{i=0}^{x-1}\Gamma_i) \label{R4}
\end{eqnarray}
By Theorem \ref{k2}, $K_x \otimes \bar{K}_{4}$ has a partial $1$-factorization $\{\Gamma'_0,\Gamma'_1,...,\Gamma'_{x-1}\}$  where each $\Gamma'_i$ misses $X_i.$ Therefore, by (\ref{R4}) it follows that
\begin{eqnarray}
K_{4x} = \oplus_{i=0}^{x-1} (\Gamma'_i \oplus \Gamma_i), \nonumber
\end{eqnarray} 	
where	$V(\Gamma'_i \oplus \Gamma_i)=V(K_{4x})=\mathbb{Z}_x \times \mathbb{Z}_4,$ Also
\begin{eqnarray}
(K_{4x} \times K_{ky}) (2) \cong \oplus_{i=0}^{x-1} \Big\{ \big( \Gamma'_i  \times K_{ky}(2)\big) \oplus \big(\Gamma_i \times K_{ky}(2)\big) \Big\} . \nonumber
\end{eqnarray}
 By Theorem \ref{t6} and   Theorem \ref{l1}, $\Gamma'_i \times K_{ky}(2)$ has a $C_k$-factorization $\mathcal{G}'_i=\{G'_{i,1},G'_{i,2},...,G'_{i,(ky-1)}\},$ for every $ i \in \mathbb{Z}_x.$	\\
 It is clear that $\Gamma_i \times K_{ky}(2) \cong K_4
 (2) \times K_{ky}$. By Theorem \ref{t5}, $ K_4(2)$ has a near $C_3$-factorization $\mathcal{G}_i=\{G_{i,1},G_{i,2},G_{i,3},G_{i,4}\}$, and by Theorem \ref{nr}, $G_{i,j} \times K_{ky}$  has a  $C_k$-factorization $\mathcal{G}_{i,j}=\{G_{i,j,1},G_{i,j,2},...,$ $G_{i,j,ky-1}\}.$  Hence $\Gamma_i \times K_{ky}(2)$ has a partial $C_k$-factorization. \\
One can check that $\mathcal{G}=\{G'_{i,l} \oplus G_{i,j,l}| i \in \mathbb{Z}_x, \ 1\leq j \leq 4, \ 1 \leq l \leq ky-1\}$ gives a partial $C_k$-factorization of $(K_{4x} \times K_{ky})(2).$
 Therefore $k$-ARCS of  $(K_u \times {K}_{g})(2)$ exists.
\end{proof}
\begin{thm} \label{26}
	For all even integers $k > 6$, $y \geq 2$, $u=4x, \ x \in \mathbb{N} \setminus \{2\}$ and $g=ky,$	
	there exists a $k$-ARCS of $(K_u \times {K}_g)(2)$.
\end{thm}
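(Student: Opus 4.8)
\textbf{Proof proposal for Theorem \ref{26}.}

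The plan is to mimic the structure of Theorem \ref{tt4} case-by-case, replacing the appeals to Theorem \ref{nr} (which covers $K_3 \times K_{ky}$ only for odd $y$) by appeals to Theorem \ref{2=6}, which is precisely the $y \geq 2$ analogue for even $k > 6$. First I would set $u = 4x$, $g = ky$ with $x \in \mathbb{N} \setminus \{2\}$, $y \geq 2$, and split into the two cases $x = 1$ and $x > 2$ exactly as in Theorem \ref{tt4}.

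For $x = 1$, write $(K_4 \times K_{ky})(2) \cong K_4(2) \times K_{ky}$. By Theorem \ref{t5}, $K_4(2) = K_{3 \cdot 1 + 1}(2)$ has a near $C_3$-factorization $\{G_1, G_2, G_3, G_4\}$; each $G_i$ is a near $C_3$-factor, so $G_i \times K_{ky} \cong \big(\,\text{union of copies of } C_3\,\big) \times K_{ky}$, and since $k > 6$ and $y \geq 2$, Theorem \ref{2=6} gives a $C_k$-factorization of $K_3 \times K_{ky}$ into $ky - 1$ factors. Collecting these over $i = 1, \dots, 4$ yields $4(ky-1)$ partial $C_k$-factors of $(K_4 \times K_{ky})(2)$, which is exactly the count required by Theorem \ref{nsty} for a $k$-ARCS (here $\lambda = 2$, $g = ky$, $u = 4$, giving $\frac{\lambda u (g-1)}{2} = 4(ky-1)$).

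For $x > 2$, I would reuse verbatim the decomposition $K_{4x} \cong (K_x \otimes \bar{K}_4) \oplus (\oplus_{i=0}^{x-1} \Gamma_i)$ from Theorem \ref{tt4}, with $\Gamma_i \cong K_4$ and, via Theorem \ref{k2}, a partial $1$-factorization $\{\Gamma'_0, \dots, \Gamma'_{x-1}\}$ of $K_x \otimes \bar{K}_4$ with $\Gamma'_i$ missing $X_i$. Then $(K_{4x} \times K_{ky})(2) \cong \oplus_{i=0}^{x-1} \{ (\Gamma'_i \times K_{ky}(2)) \oplus (\Gamma_i \times K_{ky}(2)) \}$. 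The hole part $\Gamma'_i \times K_{ky}(2)$ is handled exactly as before: Theorem \ref{t6} gives a $C_k$-factorization of $K_{ky}(2)$, Theorem \ref{l1} lifts it through the tensor product with $\Gamma'_i$, producing $ky - 1$ $C_k$-factors $\mathcal{G}'_i = \{G'_{i,1}, \dots, G'_{i, ky-1}\}$. The filled part $\Gamma_i \times K_{ky}(2) \cong K_4(2) \times K_{ky}$ is handled as in the $x=1$ case via Theorem \ref{t5} and Theorem \ref{2=6}, giving a partial $C_k$-factorization with, for each $i$ and each of the four near $C_3$-factors $G_{i,j}$, a family $\{G_{i,j,1}, \dots, G_{i,j,ky-1}\}$. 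Matching indices as $\mathcal{G} = \{ G'_{i,l} \oplus G_{i,j,l} \mid i \in \mathbb{Z}_x,\ 1 \le j \le 4,\ 1 \le l \le ky-1 \}$ assembles a partial $C_k$-factorization of $(K_{4x} \times K_{ky})(2)$.

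The only genuine obstacle is bookkeeping: one must check that the $\Gamma'_i$-factors and the $\Gamma_i$-factors align so that each $\oplus$-sum $G'_{i,l} \oplus G_{i,j,l}$ is genuinely a partial $C_k$-factor missing exactly one partite set (the near $C_3$-factor $G_{i,j}$ omits one vertex of $\Gamma_i$, hence the product omits one $\mathbb{Z}_{ky}$-block, while $\Gamma'_i$ already misses $X_i$), and that the total count $x \cdot 4 \cdot (ky-1) = \frac{2 \cdot 4x \cdot (ky-1)}{2}$ matches Theorem \ref{nsty}. Since every ingredient is quoted directly and the combinatorial alignment is identical to Theorem \ref{tt4}, no new idea is needed beyond substituting Theorem \ref{2=6} for Theorem \ref{nr}.
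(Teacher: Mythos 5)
Your proposal is correct and follows essentially the same route as the paper: the same split into $x=1$ and $x>2$, the same use of Theorem \ref{t5} to obtain the near $C_3$-factorization of $K_4(2)$, the same $\Gamma'_i$/$\Gamma_i$ decomposition borrowed from case (ii) of Theorem \ref{tt4} with Theorems \ref{t6} and \ref{l1} handling $\Gamma'_i \times K_{ky}(2)$, and the same substitution of Theorem \ref{2=6} for Theorem \ref{nr} to cover even $y \geq 2$ and $k>6$. The final assembly $\{G'_{i,l}\oplus G_{i,j,l}\}$ and the count $4x(ky-1)$ match the paper's argument.
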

\begin{proof}
Let $ u =4x$ and  $g=ky$, where  $x \in \mathbb{N} \setminus \{2\}, \ y \geq 2$ is even.\\
We establish the proof in two cases.\\
{\bf Case (i): $x=1.$}\\
We can write $$(K_4 \times K_{ky})(2) \cong K_4(2) \times K_{ky}$$
  By Theorem \ref{t5}, $ K_4(2)$ has a near $C_3$-factorization $\mathcal{G}=\{G_{1},G_{2},G_{3},G_{4}\}$, and by Theorem \ref{2=6}, each $G_{i} \times K_{ky}, \ 1 \leq i \leq 4$   has a  $C_k$-factorization $\mathcal{G}_{i}=\{G_{i,1},G_{i,2},...,G_{i,ky-1}\}.$  Hence $ (K_4\times K_{ky})(2)$ has a partial $C_k$-factorization. \\
{\bf Case (ii): $x >2.$}\\
Using a similar procedure as in case (ii) of Theorem \ref{tt4}, we can get the following
\begin{eqnarray}
(K_{4x} \times K_{ky}) (2) \cong \oplus_{i=0}^{x-1} \Big\{ \big( \Gamma'_i  \times K_{ky}(2)\big) \oplus \big(\Gamma_i \times K_{ky}(2)\big) \Big\}. \nonumber
\end{eqnarray}
By Theorem \ref{t6} and Theorem \ref{l1}, $\Gamma'_i \times K_{ky}(2)$ has a $C_k$-factorization $\mathcal{G}'_i=\{G'_{i,1},G'_{i,2},...,G'_{i,(ky-1)}\},$ for every $ i \in \mathbb{Z}_x.$	\\
	It is clear that $\Gamma_i \times K_{ky}(2) \cong K_4
	(2) \times K_{ky}$. By Theorem \ref{t5}, $ K_4(2)$ has a partial $C_3$-factorization $\mathcal{G}_i=\{G_{i,1},G_{i,2},G_{i,3},G_{i,4}\}$, and by Theorem \ref{2=6}, $G_{i,j} \times K_{ky}$  has a  $C_k$-factorization $\mathcal{G}_{i,j}=\{G_{i,j,1},G_{i,j,2},...,G_{i,j,ky-1}\}.$  Hence $\Gamma_i \times K_{ky}(2)$ has a partial $C_k$-factorization. \\
	One can check that $\mathcal{G}=\{G'_{i,l} \oplus G_{i,j,l}| i \in \mathbb{Z}_x, \ 1\leq j \leq 4, \ 1 \leq l \leq ky-1\}$ gives a partial $C_k$-factorization of $(K_{4x} \times K_{ky})(2).$
	Therefore $k$-ARCS of  $(K_u \times {K}_{g})(2)$ exists.
\end{proof}
\begin{thm} \label{pt1}
	Let $k=p_1p_2p_3...p_t \geq 6$ be even, where $p_1,p_2,p_3,...,p_t$ are primes not necessarily distinct. If $u \equiv 1 (mod \ p_1p_2p_3...p_i )$, $g \equiv 0 (mod \ 2 p_{i+1}p_{i+2}p_{i+3}...p_t)$, $1 \leq i < t-1,$  at least one of  the $p_1,p_2,p_3,...,p_i$ is $2$ and  $p_{i+1},p_{i+2},...,p_t$ are odd primes, then there exists a $k$-ARCS of $(K_u \times {K}_g)(2)$.
	\begin{proof}
		We establish the proof in two cases\\
		{\bf Case(i)}  $i=1$ and $p_1=2$.\\
		Let  $u=2x+1$, $g=2sy$, where $x, y \geq 1,$ $s=p_2p_3...p_t$ and $k=2s.$  \\
		Now
		\begin{eqnarray}
		(K_u \times K_g)(2) &\cong&\{(F_1 \oplus F_2 \oplus ... \oplus F_u) \times K_g\} (2), \ \mbox{by Theorem \ref{t1}} \nonumber \\
		&\cong&(F_1 \times K_{2sy})(2) \oplus...\oplus (F_u \times K_{2sy})(2), \nonumber
		\end{eqnarray}
		where each $F_j$, $1 \leq j \leq u$ is a near $1$-factor of $K_u$.
		Since $(F_j \times K_{2sy})(2) \cong \frac{u-1}{2}( K_2 \times K_{ky}(2))$ and
		by Theorem \ref{t6}, $ K_{ky}(2)$ has a $C_k$-factorization $\{{C}_k^1,{C}_k^2,...,{C}_k^{ky-1}\},$   where $C_k^l, \ \ 1 \leq l \leq ky-1,$ is a $C_k$-factor of $K_{ky}(2)$.
		By Theorem \ref{l1}, each $C_k \times K_2$ is a $C_k$-factor. Thus we have obtained  $ky-1$ $C_k$-factors of $ K_{ky}(2) \times K_2$. The $C_k$-factors obtained above become the partial $C_k$-factors of $(K_u \times K_g)(2)$. Thus we get $(2x+1)(ky-1)$ partial $C_k$-factors of $(K_u \times K_g)(2)$. Hence, $k$-ARCS of $(K_u \times {K}_{g})(2)$ exists.\\
		{\bf Case (ii)} $1<i\ < t-1$ \\
		Suppose that 	$r=p_1p_2...p_i$ and $s=p_{i+1}p_{i+2}...p_t,$ $1 < i < t-1$\\
		Let $u=rx+1$, $g=2sy$, where $ x,y \geq 1$ and $k=rs.$\\
		We can write
		\begin{eqnarray}
		(K_u \times K_g)(2)&\cong&K_u(2) \times K_g \nonumber \\
		&\cong&(\mathcal{C}_{r}^1 \oplus ... \oplus \mathcal{C}_{r}^u) \times K_g, \mbox{by Theorem \ref{t4}} \nonumber \\
		&\cong&(\mathcal{C}_{r}^1 \times K_{(2s)y} ) \oplus ... \oplus (\mathcal{C}_{r}^u \times K_{(2s)y} ), \nonumber
		\end{eqnarray}
		where each $\mathcal{C}_{r}^j$, $1 \leq j \leq u$ is a near $C_{r}$-factor of $K_u(2)$.
		Since $ \mathcal{C}_r^j \times K_{(2s)y}
		\cong x (C_r \times K_{s(2y)})$,
		by Theorem \ref{3.7} each $C_r \times K_{s(2y)}$ has a $C_{rs}$-factorization. Thus we have obtained $2sy-1 \ C_k$-factors of $C_r \times K_{s(2y)}$. The $C_k$-factors obtained above become the partial $C_k$-factors of $(K_u \times K_g)(2)$. Thus, in total we get $(rx+1)(2sy-1)$ partial $C_k$-factors of $(K_u \times K_g)(2)$. Therefore $k$-ARCS of  $(K_u \times {K}_{g})(2)$ exists.
	\end{proof}
\end{thm}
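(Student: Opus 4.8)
The plan is to carry the edge-multiplicity $2$ on the complete-graph factor throughout, via $(G\times H)(2)\cong G\times H(2)\cong G(2)\times H$, and to manufacture the required partial $C_k$-factors by near-factorizing $K_u$ (or $K_u(2)$) and then tensoring with $K_g$. Write $r=p_1p_2\cdots p_i$ and $s=p_{i+1}p_{i+2}\cdots p_t$, so $k=rs$; by hypothesis $r$ is even (it contains the prime $2$) and $s$ is odd (a product of odd primes), and since $1\leq i<t-1$ the number $s$ is a product of at least two odd primes, so $s\geq 9$. From $u\equiv 1\pmod{r}$ and $g\equiv 0\pmod{2s}$ write $u=rx+1$ and $g=2sy$ with $x,y\geq 1$, and note $g=ky$ when $r=2$. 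By Theorem \ref{nsty} it is enough to build $u(g-1)$ partial $C_k$-factors of $(K_u\times K_g)(2)$, exactly $g-1$ of them omitting each of the $u$ partite sets. I would split on whether $i=1$ (so $r=2$) or $i>1$ (so $r\geq 4$).

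In the case $r=2$, $u=2x+1$ is odd, and by Theorem \ref{t1} $K_u$ has a near $1$-factorization $F_1\oplus\cdots\oplus F_u$; a degree count in the $(u-1)$-regular graph $K_u$ shows each vertex is omitted by exactly one $F_j$. Then $(K_u\times K_g)(2)\cong\oplus_{j=1}^{u}(F_j\times K_g)(2)$, and since $F_j\cong\frac{u-1}{2}K_2$ we get $(F_j\times K_g)(2)\cong\frac{u-1}{2}(K_2\times K_{ky}(2))$. By Theorem \ref{t6}, $K_{ky}(2)$ has a $C_k$-factorization into $ky-1$ factors, each a disjoint union of $y$ copies of $C_k$; as $k$ is even, $C_k\times K_2\cong 2C_k$ (alternatively Theorem \ref{l1} with $n=1$, $m=2$), so $K_2\times K_{ky}(2)$ carries $ky-1$ $C_k$-factors. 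Choosing one such factor in each of the $\frac{u-1}{2}$ vertex-disjoint blocks of $F_j\times K_g$ assembles $ky-1$ partial $C_k$-factors, each omitting the part $\{v_j\}\times V(K_g)$ for the vertex $v_j$ omitted by $F_j$; over all $j$ this gives $u(ky-1)=u(g-1)$ partial $C_k$-factors with the required distribution, hence a $k$-ARCS.

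In the case $r\geq 4$, since $u\equiv 1\pmod{r}$ and $r$ is even, Theorem \ref{t4} supplies a near $C_r$-factorization $K_u(2)=C_r^1\oplus\cdots\oplus C_r^u$, with (again by a degree count in the $2(u-1)$-regular graph $K_u(2)$) each vertex omitted by exactly one near-factor, and each $C_r^j$ a disjoint union of $x$ copies of $C_r$. Thus $(K_u\times K_g)(2)\cong K_u(2)\times K_g\cong\oplus_{j=1}^{u}(C_r^j\times K_g)$ and $C_r^j\times K_g\cong x(C_r\times K_{2sy})$. Now apply Theorem \ref{3.7} with cycle length $r$ (even, $\geq 4$), parameter $t=s$ (odd, $\geq 3$), and $2sy\equiv 0\pmod{2s}$: this yields a $C_{rs}$-factorization, i.e.\ a $C_k$-factorization, of $C_r\times K_{2sy}$ into $2sy-1$ factors. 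Pasting one such factor from each of the $x$ vertex-disjoint blocks of $C_r^j\times K_g$ produces $2sy-1$ partial $C_k$-factors omitting the part omitted by $C_r^j$; summing over $j$ gives $u(2sy-1)=u(g-1)$ of them, a $k$-ARCS.

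The arithmetic is routine; what needs care is confirming that the cited factorization results genuinely apply --- in particular, in the case $r\geq 4$, that $r\geq 4$ and $s\geq 3$ is odd, which is exactly why the hypothesis reads $i<t-1$ (forcing $s$ to have at least two odd prime factors) rather than $i\leq t-1$ --- together with the consistent use of $(G\times H)(2)\cong G(2)\times H$ to relocate the multiplicity, and the observation that a choice of a $C_k$-factor in each vertex-disjoint block recombines into a $2$-regular subgraph spanning all but one partite set of $K_u\times K_g$, so it is a genuine partial $C_k$-factor; since each partite set is omitted by exactly one near-factor, the count $g-1$ per part follows automatically. I anticipate no obstacle beyond this verification.
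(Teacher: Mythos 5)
Your proposal is correct and follows essentially the same route as the paper: for $i=1$ a near $1$-factorization of $K_u$ (Theorem \ref{t1}) combined with the $C_k$-factorization of $K_{ky}(2)$ (Theorem \ref{t6}) and $C_k\times K_2$ (Theorem \ref{l1}), and for $i>1$ a near $C_r$-factorization of $K_u(2)$ (Theorem \ref{t4}) followed by the $C_{rs}$-factorization of $C_r\times K_{2sy}$ from Theorem \ref{3.7}, yielding $u(g-1)$ partial $C_k$-factors. Your added bookkeeping of which partite set each assembled factor misses is a welcome clarification; only your side remark that $i<t-1$ is needed to force $s\geq 9$ is inessential, since the argument only requires $s\geq 3$ odd.
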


\begin{thm} \label{pt13}
	Let $k=p_1p_2p_3...p_t\geq 6$ be even, where $p_1,p_2,...,p_t$ are primes not necessarily distinct. If $u \equiv 1(mod \ p_1p_2p_3...p_i)$, $g \equiv (p_{i+1}p_{i+2}p_{i+3}...p_t) (mod \ 2 p_{i+1}p_{i+2}p_{i+3}...p_t)$ is odd, $1 \leq i \leq t-1$ and at least one of  the $p_1,p_2,p_3,...,p_i$ is $2$, then there exists a $k$-ARCS of $(K_u \times {K}_{g})(2)$.
\end{thm}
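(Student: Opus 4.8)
The plan is to run the scheme of Theorem~\ref{pt1}, but with $g$ now an \emph{odd} multiple of $s:=p_{i+1}p_{i+2}\cdots p_t$ rather than an even one, so that Theorems~\ref{op} and~\ref{l1} take over the role of Theorem~\ref{3.7}. Put $r=p_1p_2\cdots p_i$ and $s=p_{i+1}\cdots p_t$, so $k=rs$. From ``$g\equiv s\ (mod\ 2s)$ and $g$ odd'' it follows that $s$ is odd --- hence $p_{i+1},\dots,p_t$ are odd primes, and since one of $p_1,\dots,p_i$ equals $2$, $r$ is even; moreover $g=sw$ with $w=g/s$ odd, and $u=rx+1$ with $x\ge 1$. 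Note that $g\equiv s\ (mod\ 2s)$ is precisely the hypothesis of Theorem~\ref{op} guaranteeing $C_s\rVert K_g$ (recall $s$ is odd), while $s\mid g$ is exactly what Theorem~\ref{l1} needs. I would split on whether $r\ge 4$ (equivalently $i\ge 2$) or $r=2$ (equivalently $i=1$, $p_1=2$).

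\emph{Case $r\ge 4$.} Write $(K_u\times K_g)(2)\cong K_u(2)\times K_g$. Since $u\equiv 1\ (mod\ r)$ with $r$ even, Theorem~\ref{t4} gives a near $C_r$-factorization $\{\mathcal C^1,\dots,\mathcal C^u\}$ of $K_u(2)$, where $\mathcal C^j$ misses a vertex $v_j$ (the $v_j$ distinct) and $\mathcal C^j\cong xC_r$. Distributing the tensor product,
$$(K_u\times K_g)(2)\ \cong\ \bigoplus_{j=1}^{u}\bigl(\mathcal C^j\times K_g\bigr)\ \cong\ \bigoplus_{j=1}^{u}x\,(C_r\times K_g).$$
A single cycle $C_r$ has the trivial $C_r$-factorization; since $s\mid g$ and $r$ is even, Theorem~\ref{l1} yields a $C_{rs}=C_k$-factorization of $C_r\times K_g$ into $g-1$ factors. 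Gluing one factor from each of the $x$ disjoint copies gives $g-1$ $C_k$-factors of $\mathcal C^j\times K_g$, each a partial $C_k$-factor of $(K_u\times K_g)(2)$ missing the part over $v_j$. Over all $j$ this is $u(g-1)=\lambda u(g-1)/2$ partial $C_k$-factors, $g-1$ missing each part, i.e.\ a $k$-ARCS.

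\emph{Case $r=2$ (so $i=1$, $p_1=2$, $k=2s$, $u$ odd).} A near $C_2$-factorization is degenerate, so here I would use Theorem~\ref{t1}: $K_u=F_1\oplus\dots\oplus F_u$ with $F_j$ a near $1$-factor missing $v_j$, so that $(K_u\times K_g)(2)\cong K_u(2)\times K_g\cong\bigoplus_j(2F_j\times K_g)\cong\bigoplus_j x\,(K_2\times K_g)(2)$. By Theorem~\ref{op}, $C_s\rVert K_g$; a $C_s$-factor of $K_g$ is a union of $w$ disjoint $s$-cycles, and tensoring it with $K_2$ gives --- since $s$ is odd and hence $C_s\times K_2\cong C_{2s}$ --- a $C_{2s}$-factor of $K_2\times K_g$. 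Thus $K_2\times K_g$ has $(g-1)/2$ $C_{2s}$-factors and $(K_2\times K_g)(2)$ has $g-1$ of them; gluing these over the $x$ copies and collecting over $j$ produces $u(g-1)$ partial $C_k$-factors, $g-1$ missing each part --- again a $k$-ARCS.

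\emph{Where the work lies.} The construction is a fairly direct assembly of known facts, so the only points needing care are: (i) recognising that the $r=2$ branch cannot go through Theorems~\ref{t4}/\ref{l1} (the $C_2$ degeneracy) and must instead route through a $C_s$-factorization of $K_g$; and (ii) controlling the cycle \emph{length} --- in Case $r\ge 4$ this is why Theorem~\ref{l1} is applied to a single $C_r$ (wrapping it around the $C_s$-structure of $K_g$ to produce $C_{rs}$), and in Case $r=2$ this is why one first $C_s$-factorizes $K_g$ and then doubles via $\times K_2$ to reach length $2s=k$. Since $s\ge 3$, $g=sw\ge 3$ and $u=rx+1\ge r+1$, no instance slips outside the (exception-free) hypotheses of Theorems~\ref{op}, \ref{t4} and~\ref{l1}, so the necessary conditions of Theorem~\ref{nsty} are met with exactly the right counts.
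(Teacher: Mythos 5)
Your proposal is correct and matches the paper's own argument essentially step for step: the $i=1$ (i.e.\ $r=2$) branch via a near $1$-factorization of $K_u$ (Theorem~\ref{t1}), the $C_s$-factorization of $K_g$ from Theorem~\ref{op}, and $C_s\times K_2\cong C_{2s}$ (the paper cites Theorem~\ref{hc} for this); and the $r\ge 4$ branch via the near $C_r$-factorization of $K_u(2)$ (Theorem~\ref{t4}) followed by Theorem~\ref{l1} applied to $C_r\times K_g$ with $s\mid g$. The factor counts you give ($u(g-1)$ partial $C_k$-factors, $g-1$ missing each part) agree with the paper's.
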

\begin{proof}
	Suppose that $r=p_1p_2...p_i$ and $s=p_{i+1}p_{i+1}...p_t$, $1 \leq i \leq t-1$.\\
	We establish the proof in two cases.\\
	{\bf Case (i): $i=1, \ p_1=2$}\\
	Let $u=2x+1$, $g=s(2y+1)$ and $k=2s$, where $x,y \geq 1$.\\
	We can write
	\begin{eqnarray}
	(K_u \times K_g)(2) &\cong& K_{u}(2) \times K_{s(2y+1)} \nonumber\\
	&\cong& (F_1 \oplus F_2 \oplus...\oplus F_u) \times  K_{s(2y+1)}, \ \mbox{by Theorem \ref{t1}} \nonumber\\
	&\cong& (F_1 \times  K_{s(2y+1)}) \oplus...\oplus (F_u \times  K_{s(2y+1)}), \nonumber
	\end{eqnarray}
	where each $F_l, \ 1 \leq l \leq u,$ consists of $2$ near $1$-factors of $K_u(2)$.
	Since each $F_l \times  K_{s(2y+1)} \cong 2\{x(K_2 \times  K_{s(2y+1)})\}$ and by Theorem \ref{op}, $K_{s(2y+1)}$ has a $C_s$-factorization
	 $\{{C}_s^1,{C}_s^2,...,{C}_s^{\frac{g-1}{2}}\},$
		where each ${C}_s^m, \ 1 \leq m \leq \frac{g-1}{2}$ is $C_s$-factor of $K_{s(2y+1)}$. We write $K_2 \times {C}_s^m \cong {C}_s^m \times K_2 \cong (2y+1)(C_s \times K_2)$ and by Theorem \ref{hc}, each $C_s \times K_2$ is a $C_{2s}$-factor.
	Thus, in total we get $(2x+1)(g-1)$ partial $C_{k}$-factors of $(K_u \times K_g)(2)$. Hence, $k$-ARCS of $(K_u \times {K}_g)(2)$exists.\\
	{\bf Case (ii): $1 < i \leq t-1$}\\
	Let $u=rx+1$, $g=s(2y+1)$ and $k=rs$, where $x,y \geq 1$.
	\begin{eqnarray}
	(K_u \times K_g)(2)& \cong& K_u(2) \times K_g \nonumber\\
	&\cong& (\mathcal{C}_{r}^1 \oplus \mathcal{C}_{r}^2 \oplus...\oplus \mathcal{C}_{r}^u)\times K_g, \ \ \mbox{by Theorem \ref{t4}} \nonumber\\
	&\cong&(\mathcal{C}_{r}^1 \times K_g )\oplus (\mathcal{C}_{r}^2\times K_g) \oplus...\oplus (\mathcal{C}_{r}^u \times K_g) \nonumber
	\end{eqnarray}
	where each $\mathcal{C}_{r}^j$, $1 \leq j \leq u$ is a near $C_{r}$-factor of $K_u(2)$.
	Since $ C_{r}^j \times K_g \cong x (C_{r} \times K_{s(2y+1)}),$
	by Theorem \ref{l1}, each $C_{r} \times K_{s(2y+1)}$ has $s(2y+1)-1$ $C_{rs}$ factors. The  $C_k$-factors obtained above become the partial $C_k$-factors of $(K_u \times K_g)(2)$. Thus, in total we get $(rx+1)(s(2y+1)-1)$ partial $C_k$-factors of $(K_u \times K_g)(2)$. Therefore $k$-ARCS of  $(K_u \times {K}_{g})(2)$ exists.
\end{proof}

\begin{thm} \label{pt2}
	Let $k=p_1p_2p_3...p_t \equiv 0(mod \ 4), r= p_1p_2p_3...p_i,$ and $s=p_{i+1}p_{i+2}p_{i+3}...p_t,\ 1 < i \leq t-1$ , where $p_1,p_2,...,p_t$ are primes not necessarily distinct. If $u=rx+1, \ x \in \mathbb{N} \setminus \{2\}$, $g \equiv s (mod \ 2 s)$ is odd,  then there exists a $k$-ARCS of $K_u \times {K}_{g}$.
\end{thm}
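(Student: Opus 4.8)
First I would record what the hypotheses force. Since $g\equiv s\ (\mathrm{mod}\ 2s)$ is odd, $s$ is odd, and as $i\le t-1$ we have $s\ge 3$; hence every prime $2$ dividing $k$ lies in $r$, and since $4\mid k$ this gives $4\mid r$. Write $u=rx+1$ with $x\in\{1,3,4,5,\dots\}$ and $g=s(2y+1)$ with $y\ge 0$; these shapes are forced by the congruences. The conditions of Theorem~\ref{nsty} then hold automatically ($g(u-1)=k(2y+1)x\equiv 0\ (\mathrm{mod}\ k)$, $g-1$ even, $u\ge r+1\ge 3$, $g\ge s\ge 3$), so the job is to construct a partial $C_{rs}$-factorization of $K_u\times K_g$.

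\textbf{Reduction to one building block.} The plan is to mimic the structure of the proof of Theorem~\ref{tt3}. For $x=1$ the target is $K_{r+1}\times K_{s(2y+1)}$ itself. For $x\ge 3$ I would decompose $K_{rx+1}\cong(K_x\otimes\bar K_r)\oplus\bigoplus_{i=0}^{x-1}\Gamma_i$ with $\Gamma_i\cong K_{r+1}$ on $X_i\cup\{\infty\}$, write $K_x\otimes\bar K_r\cong(K_x\otimes\bar K_2)\otimes\bar K_{r/2}$, and use a partial $1$-factorization $\{\Delta_0,\dots,\Delta_{x-1}\}$ of $K_x\otimes\bar K_2$ (Theorem~\ref{k2}, valid since $x\ge 3$) to set $\Gamma'_i=\Delta_i\otimes\bar K_{r/2}$, a disjoint union of copies of $K_{r/2,r/2}$ missing $X_i$, so that $K_{rx+1}\times K_g\cong\bigoplus_i\bigl[(\Gamma'_i\times K_g)\oplus(\Gamma_i\times K_g)\bigr]$. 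Because $4\mid r$, $r/2$ is even, so each $K_{r/2,r/2}$—hence $\Gamma'_i$—has a $C_r$-factorization by Theorem~\ref{mn}, and by Theorem~\ref{l1} (with $s\mid g$; as $r$ is even the odd-case proviso is vacuous) each $C_r$-factor tensored with $K_g$ has a $C_{rs}$-factorization, so $\Gamma'_i\times K_g$ has one. Everything thus reduces to showing that $\Gamma_i\times K_g\cong K_{r+1}\times K_{s(2y+1)}$ has a partial $C_{rs}$-factorization, after which the pieces are glued together (pairing a $C_{rs}$-factor of $\Gamma'_i\times K_g$ with a partial $C_{rs}$-factor of $\Gamma_i\times K_g$ missing the same part, and arranging the partial factors that meet the $\infty$-column) exactly as in Theorem~\ref{tt3} and Theorem~\ref{26}.

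\textbf{The building block.} If $y=0$ this is Theorem~\ref{I2} with ``$k$''$\to r$, ``$t$''$\to s$. For $y\ge 1$ I would make $2y+1$ holes of type $K_s$, i.e.\ $K_{s(2y+1)}\cong(K_{2y+1}\otimes\bar K_s)\oplus(2y+1)K_s$, giving
\[
K_{r+1}\times K_{s(2y+1)}\cong\bigl[K_{r+1}\times(K_{2y+1}\otimes\bar K_s)\bigr]\oplus(2y+1)\bigl[K_{r+1}\times K_s\bigr].
\]
The $2y+1$ copies of $K_{r+1}\times K_s$ each have a partial $C_{rs}$-factorization by Theorem~\ref{I2}. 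For the first summand, an easy check from the definitions gives $K_{r+1}\times(K_{2y+1}\otimes\bar K_s)\cong(K_{r+1}\times K_{2y+1})\otimes\bar K_s$; since $r\equiv 0\ (\mathrm{mod}\ 4)$ and $2y+1$ is odd and $\ge 3$, Theorem~\ref{I1} gives a partial $C_r$-factorization $\{\mathcal P_1,\dots,\mathcal P_N\}$ of $K_{r+1}\times K_{2y+1}$, each $\mathcal P_j$ a disjoint union of $r$-cycles missing one part. Then $\mathcal P_j\otimes\bar K_s$ is a disjoint union of copies of $C_r\otimes\bar K_s$, each of which has a Hamilton-cycle (that is, $C_{rs}$-cycle) decomposition by Theorem~\ref{hc2}; taking one such $rs$-cycle from each component yields a partial $C_{rs}$-factor of $\mathcal P_j\otimes\bar K_s$ missing the blown-up part, and collecting over $j$ gives a partial $C_{rs}$-factorization of $(K_{r+1}\times K_{2y+1})\otimes\bar K_s$. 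Merging this with the factorizations of the hole-copies completes the building block, and hence the theorem.

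\textbf{Main obstacle.} I expect the delicate step to be the building block, and within it the bookkeeping that the partial $C_{rs}$-factors produced from the hole decomposition each miss \emph{exactly one} part of $K_{r+1}\times K_{s(2y+1)}$ and can be glued into a single partial $C_{rs}$-factorization. The parity facts that make the invoked results applicable are that $2y+1\ge 3$ (so Theorem~\ref{I1} applies — which is exactly why the case $y\ge 1$ must be peeled off) and that $r/2$ is even (which is precisely why the hypothesis reads $k\equiv 0\ (\mathrm{mod}\ 4)$ rather than $k$ merely even). Once the building block is available, the assembly over the $\Gamma'_i,\Gamma_i$ is routine, mirroring the proof of Theorem~\ref{tt3}.
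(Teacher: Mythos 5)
Your proposal is correct and is essentially the paper's own argument: the same $s$-hole decomposition on the $K_g$ side (handled via Theorem \ref{I1} together with Theorem \ref{hc2}, with Theorem \ref{I2} supplying the partial $C_{rs}$-factorizations of the hole blocks $K_{r+1}\times K_s$) and the same $\Gamma_i/\Gamma'_i$ splitting of $K_{rx+1}$ via Theorems \ref{k2}, \ref{mn} and \ref{l1}, your $x=1$ case coinciding with the paper's case (i) verbatim. The only difference, for $x>2$, is the order of operations — you split off the $\Gamma'_i$ first and make the $s$-holes inside the single building block $K_{r+1}\times K_{s(2y+1)}$, whereas the paper makes the $s$-holes at the level of $K_{rx+1}\times K_g$ (citing Theorem \ref{tt3} for the blown-up part) and only afterwards splits $K_{rx+1}\times K_s$ — a harmless reorganization that changes nothing of substance.
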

\begin{proof}
	 One can easily observe that $r=p_1p_2p_3...p_i \equiv 0(mod \ 4)$.\\
	Let $u=(4m)x+1,$  $g=s(2y+1)$ is odd, where $x \in \mathbb{N} \setminus \{2\},\ 4m=r, \ \ m \geq 1, \ y \geq 0$ and $k=rs$.  \\
	We establish the proof in two cases\\
	{\bf Case (i): $x=1$.}\\
	We can write 
	\begin{eqnarray}
	K_{r+1} \times K_g &\cong& K_{r+1} \times K_{s(2y+1)} \nonumber\\
	&\cong& \{(K_{r+1} \times K_{2y+1}) \otimes \bar{K}_s \}\oplus (2y+1)(K_{r+1} \times K_s) \label{4.8eq}
	\end{eqnarray}
	Graphs in the R.H.S of (\ref{4.8eq}) can be obtained by making $2y+1$ holes of type $K_{r+1} \times K_s$ in $K_{r+1} \times K_{s(2y+1)}$ and identifying the $s$-subsets of $K_{s(2y+1)}$ (in the resulting graph) into a single vertex and two of them are adjacent if the corresponding $s$-subsets form a $K_{s,s}$ in $K_{r+1} \times K_{s(2y+1)}.$ By expanding the vertices into $s$-subsets we get the first graph in  (\ref{4.8eq}).\\
	Now consider
	\begin{eqnarray}
	(K_{r+1} \times K_{2y+1}) \otimes \bar{K}_s &\cong& (\mathcal{C}_r^1\oplus...\oplus \mathcal{C}_r^{r+1}) \otimes \bar{K}_s, \ \mbox{by case (i) of Theorem \ref{tt3} } \nonumber\\
	&\cong& (\mathcal{C}_r^1 \otimes \bar{K}_s) \oplus...\oplus(\mathcal{C}_r^{rx+1} \otimes \bar{K}_s), \nonumber
	\end{eqnarray}
	where each $\mathcal{C}_r^l, \ 1 \leq l \leq r+1$, consists of $y$ partial $C_r$-factors of $K_{r+1} \times K_{2y+1}.$  Since each $\mathcal{C}_r^l \otimes \bar{K}_s
	\cong y\{(2y+1)(C_r \otimes \bar{K}_s)\}$ and by Theorem \ref{hc2}, each $C_r \otimes \bar{K}_s$ has $s$ $C_{rs}$-factors. Thus we get $ys$ partial $C_{rs}$-factors of $(K_{r+1} \times K_{2y+1}) \otimes \bar{K}_s$.\\
Let $A_0, A_1,...,A_{r-1},A_{\infty}$ be the parts of $K_{r+1} \times K_s.$
	By Theorem \ref{I2}, $K_{r+1} \times K_s$ has a partial $C_{rs}$-factorization $\mathcal{G}=\{G_{t,h} \cup G_{h}^{\infty}| t\in \mathbb{Z}_{r},   1 \leq h \leq \frac{s-1}{2}\}$ where 
	 each $G_{t,h}$ is a partial $C_{rs}$- factor missing the partite set $A_{t+\frac{r}{2}}$  and the subscripts of $A$ are taken modulo $r$;
	 each $G_{h}^{\infty}$ is a partial $C_{rs}$-factor missing the partite set $A_{\infty}.$\\
	Finally,  adding the partial $C_{rs}$-factors of (\ref{4.8eq}) gives the $\frac{s(2y+1)-1}{2}$
	partial $C_{k}$-factors of $K_{r+1} \times K_{s(2y+1)}.$
	Thus, in total we get $(r+1)(\frac{s(2y+1)-1}{2})$ partial $C_k$-factors of $K_{r+1} \times K_{s(2y+1)}.$
	Therefore $k$-ARCS of  $K_u \times {K}_{g}$ exists. \\
	{\bf Case (ii): $x >2.$}\\
	We can write 
	\begin{eqnarray}
	K_{rx+1} \times K_g &\cong& K_{rx+1} \times K_{s(2y+1)} \nonumber\\
	&\cong& \{(K_{rx+1} \times K_{2y+1}) \otimes \bar{K}_s \}\oplus (2y+1)(K_{rx+1} \times K_s) \label{4.81eq}
	\end{eqnarray}
	Graphs in the R.H.S of (\ref{4.81eq}) can be obtained by using a similar procedure as in (\ref{4.8eq}).\\
	Now consider
	\begin{eqnarray}
	(K_{rx+1} \times K_{2y+1}) \otimes \bar{K}_s &\cong& \{\mathcal{C}_r^1\oplus...\oplus \mathcal{C}_r^{rx+1}\} \otimes \bar{K}_s, \ \mbox{by case(ii) of  Theorem \ref{tt3}} \nonumber\\
	&\cong& (\mathcal{C}_r^1 \otimes \bar{K}_s) \oplus...\oplus(\mathcal{C}_r^{rx+1} \otimes \bar{K}_s), \nonumber
	\end{eqnarray}
	where each $\mathcal{C}_r^l, \ 1 \leq l \leq rx+1$, consists of $y$ partial $C_r$-factors of $K_{rx+1} \times K_{2y+1}.$  Since each $\mathcal{C}_r^l \otimes \bar{K}_s
	\cong y\{x(2y+1)(C_r \otimes \bar{K}_s)\}$ and by Theorem \ref{hc2}, each $C_r \otimes \bar{K}_s$ has $s$ $C_{rs}$-factors. Thus we get $ys$ partial $C_{rs}$-factors of $(K_{rx+1} \times K_{2y+1}) \otimes \bar{K}_s$.\\
	Now, we construct the partial $C_{rs}$-factors of $K_{rx+1} \times K_s$ as follows:\\
Using a similar procedure as in case (ii) of Theorem \ref{tt3}, we can write	
\begin{eqnarray}
K_{rx+1} \times K_s 
\cong \oplus_{i=0}^{x-1}\big\{(\Gamma'_i \times K_s) \oplus (\Gamma_i \times K_s)\big\},  \label{R3}
\end{eqnarray}
where $\Gamma_i$ and $\Gamma'_i$ are complete graph of order $4m+1$ and the complete multipartite graph $K_x \otimes \bar{K}_{4m}$ respectively.
\par By Theorem \ref{mn}, $\Gamma'_i$ has a $C_{4m}$-factorization $\mathcal{G}'_i=\{G'_{i,1},G'_{i,2},...,G'_{i,2m}\},$ and by Theorem \ref{l1}, $G'_{i,j} \times K_s$ has a $C_{rs}$-factorization $\mathcal{G}'_{i,j}=\{G'_{i,j,1},G'_{i,j,2},...,G'_{i,j,s-1}\},$ for every $i \in \mathbb{Z}_x,$ and $ j \in \{1,2,...,2m\}.$ Hence, $\cup_j \mathcal{G}'_{i,j}$ gives a $C_k$-factorization of $\Gamma'_i \times K_s.$
\par Considering that $\Gamma_i \times K_s \cong K_{r+1} \times K_s,$ let $A_0, A_1,...,A_{r-1},A_{\infty}$ be the parts of  $K_{r+1} \times K_s.$
By Theorem \ref{I2}, $\Gamma_i \times K_s (\cong K_{r+1} \times K_s)$ has a partial $C_k$-factorization $\mathcal{G}_i=\{G_{i,t,h} \cup G_{i,h}^{\infty}|i \in \mathbb{Z}_x,\  t\in \mathbb{Z}_{r},  \ 1 \leq h \leq \frac{s-1}{2}\}$ where 
each $G_{i,t,h}$ is a partial $C_k$-factor missing the partite set $A_{t+\frac{r}{2}}$  and the subscripts of $A$ are taken modulo $r$;
 each $G_{i,h}^{\infty}$ is a partial $C_k$-factor missing the partite set $A_{\infty}.$\\
One can check that $\mathcal{G}=\{G'_{i,j,l} | i \in \mathbb{Z}_x, 1 \leq j \leq 2m, 1 \leq l \leq s-1\} \cup \{G_{i,t,h} \cup G_{i,h}^{\infty}| i \in \mathbb{Z}_x, t\in \mathbb{Z}_{r},   1 \leq h \leq \frac{s-1}{2}\}$
 gives a partial $C_k$-factorization of $K_{rx+1} \times K_s.$ \\
Finally,  adding the partial $C_{k}$-factors of (\ref{4.81eq}) gives the $\frac{s(2y+1)-1}{2}$
partial $C_{k}$-factors of $K_{rx+1} \times K_{s(2y+1)}.$
Thus, in total we get $(rx+1)(\frac{s(2y+1)-1}{2})$ partial $C_k$-factors of $K_{rx+1} \times K_{s(2y+1)}.$
Therefore $k$-ARCS of  $K_u \times {K}_{g}$ exists.
	
\end{proof}

\begin{thm} \label{pt3}
	Let $k=p_1p_2p_3...p_t \geq 6$ be even, where $p_1,p_2,p_3,...,p_t$ are primes not necessarily distinct. If $u \equiv 1 (mod \ p_1p_2p_3...p_i )$ and $g \equiv 0 (mod \  p_{i+1}p_{i+2}p_{i+3}...p_t)$, $1 \leq i \leq t-1,$ $p_1,p_2,p_3,...,p_i$ are odd primes and at least one of  the $p_{i+1},p_{i+2},p_{i+3},...,p_t$ is $2$, then there exists a $k$-ARCS of $(K_u \times {K}_{g})(2)$.
\end{thm}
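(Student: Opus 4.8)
The plan is to run the argument of Theorems~\ref{pt1} and~\ref{pt13} with the roles of the even and odd parts of $k$ reversed: here $r:=p_1p_2\cdots p_i$ is \emph{odd} (with $r\ge 3$) while $s:=p_{i+1}p_{i+2}\cdots p_t$ is \emph{even} (one of its prime factors is $2$), and $k=rs$. I would write $u=rx+1$, $g=sy$ with $x,y\ge 1$, and begin from $(K_u\times K_g)(2)\cong K_u(2)\times K_g$. Because $r\ge 3$ is odd, Theorem~\ref{t5} provides a near $C_r$-factorization $\{\mathcal C_r^1,\dots,\mathcal C_r^u\}$ of $K_u(2)=K_{rx+1}(2)$ (in place of Theorem~\ref{t4}, which would require $r$ even), each $\mathcal C_r^j$ being a disjoint union of $x$ copies of $C_r$ and missing exactly one vertex, hence one part of $(K_u\times K_g)(2)$. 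Distributing $\times$ over $\mathcal C_r^1\oplus\cdots\oplus\mathcal C_r^u$ reduces the whole problem to finding a $C_k$-factorization of $\mathcal C_r^j\times K_g\cong x\,(C_r\times K_{sy})$ into $g-1$ factors, i.e.\ to the statement: \emph{$C_r\times K_{sy}$ has a $C_{rs}$-factorization, with $sy-1$ factors}.

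That claim is the heart of the matter, and I would split it on the parity of $y$. If $y$ is even, then $g=sy\equiv 0\pmod 4$ (since $s$ is even), so in particular $g\not\equiv 2\pmod 4$; as $C_r\rVert C_r$ trivially and $s\mid g$, Theorem~\ref{l1} applies—its odd-cycle exception $m\equiv 2\pmod 4$ being avoided—and yields $C_{rs}\rVert C_r\times K_g$, which by a degree count is a $C_k$-factorization into $g-1=sy-1$ factors. If $y$ is odd I would instead use the ``hole'' splitting $K_{sy}\cong(K_y\otimes\bar K_s)\oplus yK_s$ (exactly as in case~(ii) of Theorem~\ref{nr}), so that $C_r\times K_{sy}\cong\{(C_r\times K_y)\otimes\bar K_s\}\oplus y(C_r\times K_s)$; here Theorem~\ref{l1} (now applicable, as $y$ is odd) factors $C_r\times K_y$ into $y-1$ $C_r$-factors, each a disjoint union of $C_r$'s, so blowing up by $\bar K_s$ and using Theorem~\ref{hc2}—each $C_r\otimes\bar K_s$ has $rs=k$ vertices and splits into $s$ Hamilton, hence $C_k$-, factors—produces $(y-1)s$ $C_k$-factors, while Theorem~\ref{hc} (with $r$ odd and $s$ even) splits $C_r\times K_s$, again on $rs=k$ vertices, into $s-1$ Hamilton, hence $C_k$-, factors. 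Adding up gives $(y-1)s+(s-1)=sy-1$ $C_k$-factors, with $y=1$ the degenerate sub-case in which only $C_r\times K_s$ survives.

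Collecting these over $j=1,\dots,u$ produces $u(g-1)$ partial $C_k$-factors of $(K_u\times K_g)(2)$, each missing one of the $u$ parts; this equals $\tfrac{\lambda}{2}u(g-1)$ for $\lambda=2$, matching the count in Theorem~\ref{nsty}, so this is a $k$-ARCS. The main obstacle I anticipate is precisely the odd-cycle caveat of Theorem~\ref{l1}: unlike in Theorems~\ref{pt1}--\ref{pt13}, where the ``$r$-part'' of $k$ is even, one cannot simply apply Theorem~\ref{l1} to $C_r\times K_{sy}$ when $s\equiv 2\pmod 4$ and $y$ is odd (for then $g\equiv 2\pmod 4$), and this is exactly what forces the parity split together with the hole construction for odd $y$. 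Everything else—the distributivity of $\times$ over $\oplus$, the regularity counts that pin down the number of factors in each step, and the bookkeeping of which part each factor misses—should be routine.
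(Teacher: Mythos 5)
Your proposal is correct, and its skeleton coincides with the paper's: write $u=rx+1$, $g=sy$ with $r=p_1\cdots p_i$ odd and $s=p_{i+1}\cdots p_t$ even, take the near $C_r$-factorization of $K_{rx+1}(2)$ from Theorem~\ref{t5}, distribute $\times K_g$ over it, and reduce everything to a $C_{rs}$-factorization of $C_r\times K_{sy}$ into $sy-1$ factors; the bookkeeping of missed parts and the count $g-1$ per part are exactly as in the paper. Where you diverge is the key step: the paper simply invokes Theorem~\ref{l1} on $C_r\times K_{sy}$ and stops, whereas you split on the parity of $y$, using Theorem~\ref{l1} only when $sy\equiv 0\ (mod \ 4)$ and otherwise the hole decomposition $C_r\times K_{sy}\cong\{(C_r\times K_y)\otimes\bar K_s\}\oplus y(C_r\times K_s)$ together with Theorems~\ref{hc2} and~\ref{hc} (the same device the paper itself uses in Theorems~\ref{3.7} and~\ref{both even}). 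This extra care is not wasted: since $r$ is odd, Theorem~\ref{l1} carries the restriction $m\not\equiv 2\ (mod \ 4)$, and the paper's one-line application silently needs $g=sy\not\equiv 2\ (mod \ 4)$. That is automatic in the setting of the main Theorem~\ref{mt}, where $k\equiv 0\ (mod \ 4)$ and $r$ odd force $4\mid s$, but Theorem~\ref{pt3} as stated (and its use in the concluding Remark for $k\equiv 2\ (mod\ 4)$) allows $s\equiv 2\ (mod \ 4)$ with $y$ odd, where the paper's citation of Theorem~\ref{l1} does not directly apply; your parity split covers precisely that case, so your argument is, if anything, more complete than the published one, at the cost of a slightly longer case analysis.
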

	\begin{proof}
		Suppose that $r=p_1p_2...p_i$ and $s=p_{i+1}p_{i+1}...p_t$, $1 \leq i \leq t-1$.
		Let $u=rx+1$, $g=sy$, where $x,y \geq 1$ and $k=rs$, $1 \leq i \leq t-1$.\\
		Now, we write
		\begin{eqnarray}
		(K_u \times K_g)(2)&\cong&K_{rx+1}(2) \times K_g \nonumber \\
		&\cong&(\mathcal{C}_{r}^1 \oplus ... \oplus \mathcal{C}_{r}^u) \times K_g, \ \mbox{by Theorem \ref{t5}} \nonumber \\
		&\cong&(\mathcal{C}_{r}^1 \times K_{sy} ) \oplus ... \oplus (\mathcal{C}_{r}^u \times K_{sy} ), \nonumber
		\end{eqnarray}
		where each $\mathcal{C}_{r}^j$, $1 \leq j \leq u,$ is a near ${C}_{r}$-factor of $K_u(2)$.
		Since $\mathcal{C}_{r}^j \times K_{sy}
		\cong x (C_{r} \times K_{sy}),$
		by Theorem \ref{l1}, for each $C_{r} \times K_{sy}$ has $sy-1$ $C_{rs}$-factors. The $C_{rs}$-factors obtained above become the partial $C_{rs}$-factors of $ (K_{rx+1} \times K_{sy})(2)$. Thus, in total we get $(rx+1)(sy-1)$ partial $C_{rs}$-factors of $ (K_{rx+1} \times K_{sy})(2).$ Therefore $k$-ARCS of  $(K_u \times {K}_{g})(2)$ exists.
	\end{proof}
\begin{thm} \label{both even}
	Let $k=p_1p_2p_3...p_t \equiv 0(mod \ 4)$, where $p_1,p_2,p_3,..,p_t$ are primes not necessarily distinct. If $u \equiv 1 (mod \ p_1p_2p_3...p_i )$ and $g \equiv 0 (mod \  p_{i+1}p_{i+2}p_{i+3}...p_t)$, $1 \leq i < t-1,$ at least one of the $p_1,p_2,...,p_i$ and $p_{i+1},p_{i+2},...,p_t$ is $2$, then there exists a $k$-ARCS of $(K_u \times {K}_{g})(2)$.
\end{thm}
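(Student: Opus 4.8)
The plan is to mirror the proofs of Theorems \ref{pt1} and \ref{pt13}, now exploiting that \emph{both} of $r:=p_1p_2\cdots p_i$ and $s:=p_{i+1}p_{i+2}\cdots p_t$ are even (so $k=rs\equiv 0\pmod 4$ is automatic), and that $s\ge 4$ because $i<t-1$ forces $s$ to be a product of at least two primes. Write $u=rx+1$ with $x\ge 1$ (possible since $u\equiv 1\pmod r$) and $g=sy$ with $y\ge 1$ (since $s\mid g$); then $u=rx+1\ge 3$ and $g=sy\ge 4$, and $g(u-1)=rsxy=kxy\equiv 0\pmod k$, so all the necessary conditions of Theorem \ref{nsty} hold. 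As always, start from $(K_u\times K_g)(2)\cong K_u(2)\times K_g$.

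Since $r$ is even and $u\equiv 1\pmod r$, Theorem \ref{t4} supplies a near $C_r$-factorization $\{\mathcal{C}_r^1,\dots,\mathcal{C}_r^u\}$ of $K_u(2)$; as each vertex of $K_u(2)$ has degree $2(u-1)$, each vertex is missed by exactly one $\mathcal{C}_r^j$, and each $\mathcal{C}_r^j$ is a disjoint union of $x=(u-1)/r$ cycles of length $r$. Distributing $\times K_g$ over $\oplus$ gives
\[
K_u(2)\times K_g\;\cong\;\bigoplus_{j=1}^{u}\bigl(\mathcal{C}_r^j\times K_g\bigr),\qquad
\mathcal{C}_r^j\times K_g\;\cong\;x\,(C_r\times K_{sy}).
\]
Now apply Theorem \ref{l1} to the single cycle $C_r$ (which trivially has a $C_r$-factorization) with $n=s$ and $m=sy$: since $s\mid sy$ and $r$ is even (so the parity proviso is vacuous), $C_r\times K_{sy}$ has a $C_{rs}$-factorization, necessarily into $\tfrac{2(sy-1)}{2}=g-1$ factors. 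Hence each $\mathcal{C}_r^j\times K_g$ splits into $g-1$ spanning $2$-regular subgraphs all of whose cycles have length $rs=k$ — for the $m$-th one, take the union of the $m$-th $C_{rs}$-factor over the $x$ copies of $C_r\times K_{sy}$ — and each of these is a partial $C_k$-factor of $(K_u\times K_g)(2)$ missing the part indexed by the vertex omitted by $\mathcal{C}_r^j$. Ranging over $j$ produces $u(g-1)$ partial $C_k$-factors, exactly the number $\lambda u(g-1)/2$ forced by Theorem \ref{nsty}, with $g-1$ of them missing each part; so $(K_u\times K_g)(2)$ has a $k$-ARCS.

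The one delicate point is the case $i=1$, where $r=p_1=2$: then $C_r=C_2$ is a doubled edge, $C_r\times K_{sy}$ is the doubled crown graph $(K_2\times K_{sy})(2)$, and the invocation of Theorem \ref{l1} above amounts to the assertion that $(K_2\times K_{sy})(2)$ admits a $C_{2s}$-factorization; this is where I expect care is needed. If one prefers not to feed the degenerate cycle $C_2$ into Theorem \ref{l1}, split on the parity of $y$: when $y$ is even, $g=sy\equiv 0\pmod{2s}=k$ with $u$ odd, so Theorem \ref{tt1} applies verbatim; when $y$ is odd, use a near $1$-factorization of $K_u$ (Theorem \ref{t1}, valid since $u$ is odd), reducing to the single claim that $(K_2\times K_{sy})(2)\cong 2(K_{sy,sy}-I)$ decomposes into $g-1$ $C_{2s}$-factors. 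For that last claim I would write $K_{sy,sy}-I$ block-wise over $y$ blocks of size $s$ (between distinct blocks a complete bipartite $K_{s,s}$, within a block a crown $K_{s,s}-I$), decompose the $K_{s,s}(2)$ blocks into Hamilton cycles by Theorem \ref{mn} and the doubled crown blocks $(K_2\times K_s)(2)$ into Hamilton cycles ($s-1$ of them), and reassemble the block-level Hamilton cycles along the identity and the $y-1$ non-identity translations of $\mathbb{Z}_y$ so that the $(s-1)+s(y-1)=sy-1$ resulting factors use each block the correct number of times. Everything beyond this is bookkeeping.
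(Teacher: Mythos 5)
Your generic case is sound and is essentially a streamlined version of the paper's Case (ii): both arguments start from the near $C_r$-factorization of $K_u(2)$ supplied by Theorem \ref{t4} and then must turn each copy of $C_r\times K_{sy}$ into $sy-1$ $C_{rs}$-factors. Where the paper does this by splitting $C_r\times K_{sy}\cong\{(C_r\times K_y)\otimes\bar{K}_s\}\oplus y(C_r\times K_s)$ and invoking Theorems \ref{l1}, \ref{hc2} and \ref{pten} (the last being where both $r$ and $s$ even is used), you invoke Theorem \ref{l1} once, with $G=C_r$, $n=s$, $m=sy$; since $r$ is even the proviso in \ref{l1} is vacuous, and this is exactly how the paper itself applies \ref{l1} in Theorem \ref{pt13}, so by the paper's own standards the shortcut is legitimate (one should only be aware that \ref{l1} as stated degenerates for very small $m$, which does not occur here since $sy\geq 4$). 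Your treatment of $i=1$ with $y$ even, citing Theorem \ref{tt1}, coincides in substance with the paper's Case (i), which takes $g=2sy$.

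The genuine flaw is in your extra branch $i=1$, $y$ odd. Its key building block --- that $(K_2\times K_s)(2)\cong 2(K_{s,s}-I)$ decomposes into $s-1$ Hamilton cycles, i.e.\ $C_{2s}$-factors --- is false for $s=4$. A Hamilton cycle in the multigraph $2(K_{4,4}-I)$ cannot use two parallel edges, so three Hamilton cycles decomposing it would be three Hamilton cycles of the simple graph $K_{4,4}-I$ covering every edge exactly twice; their complements would be three perfect matchings forming a perfect $1$-factorization of $K_{4,4}-I$. Viewing these matchings as derangements of $\{0,1,2,3\}$, two of the three have the same parity, so their quotient is an even fixed-point-free permutation of four points, hence a product of two transpositions and not a $4$-cycle; the union of those two matchings is therefore two $4$-cycles rather than an $8$-cycle, and no perfect $1$-factorization exists. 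Thus your $y$-odd construction already collapses for $k=8$, and the concluding ``bookkeeping'' reassembly is in any case not carried out. To be fair, the paper does not cover this sub-case either: its Case (i) silently assumes $g\equiv 0 \pmod{2s}$, so for $i=1$ and $g\equiv s \pmod{2s}$ neither your argument nor the paper's proof establishes the claimed result; but your proposed repair cannot work as written.
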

\begin{proof}
Suppose that $r=p_1p_2...p_i$ and $s=p_{i+1}p_{i+1}...p_t$, $1 \leq i \leq t-1$.\\
We establish the proof in two cases\\
{\bf Case (i)} $i=1$ and $p_1=2$.\\
Let $u=2x+1$, $g=2sy$, where $x,y \geq 1$ and $k=2s.$\\
This is substantially the same proof as in case (i) of Theorem \ref{pt1}. The only difference is that we take $s$ is even.\\
{\bf Case (ii)} $1 <i < t-1$ \\
Let $u=rx+1$, $g=sy$, where $x,y \geq 1$ and $k=rs$.
 We can write
\begin{eqnarray}
(K_u \times K_g)(2)&\cong&K_{rx+1}(2) \times K_g \nonumber \\
&\cong&(\mathcal{C}_{r}^1 \oplus ... \oplus \mathcal{C}_{r}^u) \times K_g, \ \mbox{by Theorem \ref{t4}} \nonumber \\
&\cong&(\mathcal{C}_{r}^1 \times K_{sy} ) \oplus ... \oplus (\mathcal{C}_{r}^u \times K_{sy} ), \nonumber
\end{eqnarray}
where each $\mathcal{C}_{r}^j$, $1 \leq j \leq u,$ is a near ${C}_{r}$-factor of $K_u(2)$.
Since $\mathcal{C}_{r}^j \times K_{sy}
\cong x (C_{r} \times K_{sy}),$ and we know that $C_r \times K_{sy} \cong (C_r \times K_y) \otimes \bar{K}_s \oplus y(C_r \times K_s).$
By Theorem \ref{l1},  $C_{r} \times K_{y}$ has a $y-1$ $C_{r}$-factors, and by Theorem \ref{hc2}, each $C_r \otimes \bar{K}_s$ has a $s  C_{rs}$-factors. By Theorem \ref{pten}, $C_r \times K_s$ has a $C_{rs}$-factors. Thus we have obtained $(sy-1) \ C_{k}$-factors of $C_r \times K_{sy}$.
The $C_{k}$-factors obtained above become the partial $C_{k}$-factors of $ (K_{rx+1} \times K_{sy})(2)$. Thus, in total we get $(rx+1)(sy-1)$ partial $C_{k}$-factors of $ (K_{rx+1} \times K_{sy})(2).$ Therefore $k$-ARCS of  $(K_u \times {K}_{g})(2)$ exists.
\end{proof}

\begin{proof}[\bf Proof of Theorem \ref{mt}]
Let $k=p_1p_2p_3...p_t$, where $p_1,p_2,p_3,...,p_t$ are primes not necessarily distinct.\\
Necessity follows from Theorem \ref{nsty}. Sufficiency can be divided into two cases.\\
 {\bf Case (i):} When $\lambda=1$,
  the values of $u$ and $g$ are one of the following
\begin{itemize}
\item[(a)] $u=kx+1$, $g \geq 3$ is odd, where $x \in \mathbb{N} \setminus \{2\};$
\item[(b)] $ u=rx+1, \ g \equiv s(mod  \ 2s)$ is odd, where $r=p_1p_2p_3...p_i \equiv 0(mod \ 4), \ s=p_{i+1}p_{i+2}...p_t, \ 1 <i \leq t-1, \ x \in \mathbb{N} \setminus \{2\};$ 
\end{itemize}
{\bf Case (ii):} When $\lambda=2$,
the values of $u$ and $g$ are one of the following
\begin{itemize}
\item[(c)] $u \equiv 1(mod \ k)$, $g \geq 2;$ 
\item[(d)] $u \geq 3$ is odd, $g \equiv 0(mod \ k);$ 
\item[(e)] $u=4x$, $g=ky$, $x \in \mathbb{N} \setminus \{2\}, \ y \geq 1$ is odd,  where $k \geq 6;$ 
\item[(f)] $u=4x$, $g=ky$, $x \in \mathbb{N} \setminus \{2\}, \ y  \geq 2$ is even,  where $k > 6$; 
\item[(g)] $u \equiv 1 (mod \ p_1p_2p_3...p_i)$, $g \equiv 0(mod  \ 2p_{i+1}p_{i+2}...p_t)$, $1 \leq i < t-1$, where at least one of the  $p_1,p_2,...,p_i$ is $2$ and $p_{i+1},p_{i+2},...,p_t$ are odd primes;
\item[(h)] $u  \equiv 1 (mod \ p_1p_2p_3...p_i)$, $g \equiv 0 (mod  \ p_{i+1}p_{i+2}...p_t)$, $1 \leq i \leq t-1$, where $p_1,p_2,...,p_i$ are odd primes and at least one of the $p_{i+1},p_{i+2},...,p_t$  is $2;$ 
\item[(i)] $u  \equiv 1 (mod \ p_1p_2p_3...p_i)$, $g \equiv 0 (mod  \ p_{i+1}p_{i+2}...p_t)$, $1 \leq i < t-1$, where at least one of $p_1,p_2,...,p_i$ and $p_{i+1},p_{i+2},...,p_t$ is $2$.
\end{itemize}	
The proof for  (a),(b),(c),(d),(e),(f),(g),(h), and (i) follows from Theorems  \ref{tt3}, \ref{pt2},\ref{tt2}, \ref{tt1}, \ref{tt4}, \ref{26}, \ref{pt1}, \ref{pt3}, and  \ref{both even}.\\
 		If $\lambda >2$ is odd (respectively, even), the values for $u$ and $g$ are the same as that of case(i) (respectively, case(i) and case (2)).  Hence  $k$-ARCS of $(K_u \times {K}_g)(\lambda)$ exists.
	\end{proof}

\begin{rmk*}
	For both even $\lambda \geq 2, \ k \equiv 2(mod \ 4),$	there exists a $k$-ARCS of $(K_u \times {K}_{g})(\lambda)$ if and only if $u \geq 3$, $g\geq 2$, $\lambda(g-1) \equiv 0 (mod \ 2)$, $g(u-1) \equiv 0 (mod \ k)$  except possibly $(u,k) \in \{(8,4s+2),(4t+2,4s+2)| s,t \geq 1\}$ and $(u,g,k) \in \{(4t,6y,6)| \ t \geq 1 \ \mbox{and} \ y \geq 2 \ \mbox{is even}\}.$
\end{rmk*}
When $\lambda=2$ proof for the above statement is immediately follows from Theorems \ref{tt2}, \ref{tt1} - \ref{pt1}, \ref{pt13}, and \ref{pt3}. If $\lambda >2$ is even, the values for $u$ and $g$ are the same as that of Theorems \ref{tt2}, \ref{tt1} - \ref{pt1}, \ref{pt13}, and \ref{pt3}.  Hence  $k$-ARCS of $(K_u \times {K}_g)(\lambda)$ exists.
\section{Conclusion}
\par  In this paper, we have settled the existence of  $k$-ARCS of $(K_u \times {K}_{g})(\lambda)$, for $k \equiv 0 (mod \ 4)$ with  few possible exceptions. Our results also gives a partial solution to the existence of modified cycle frames of complete multipartite multigraphs. 
\section*{Acknowledgments:}
\noindent First author thank the Periyar University for its support through URF grant no: PU/ AD-3/URF/015723/2020.\\
Second author thank NBHM for its support through PDF grant no: 2/40(22)/2016/R\&D-11/15245.\\
Corresponding author thank  DST, New Delhi for its support through FIST grant no: SR/FIST/MSI-115/2016 (Level-I).

\end{document}